\newtheorem{theorem}{Theorem}[section]
\newtheorem{lemma}[theorem]{Lemma}
\theoremstyle{definition}
\theoremstyle{remark}
\newtheorem{remark}[theorem]{Remark}
\numberwithin{equation}{section}
\begin{document}

\title[Quasi-alternating surgeries on asymmetric L--space knots]{Quasi-alternating surgeries on asymmetric L--space knots in the census}

\author[M. Teragaito]{Masakazu Teragaito}
\address{Department of Mathematics and Mathematics Education, Hiroshima University,
1-1-1 Kagamiyama, Higashi-hiroshima 7398524, Japan.}
\email{teragai@hiroshima-u.ac.jp}
\thanks{The author has been partially supported by JSPS KAKENHI Grant Number JP25K07004.}

\subjclass[2020]{Primary 57K10}

\date{\today}



\begin{abstract}
In the SnapPy census, there are $9$ asymmetric L--space knots.
It is known that each of them admits  exactly two quasi-alternating surgeries with the aid of a computer.
The purpose of this article is to confirm these surgeries  by the Montesinos trick.
\end{abstract}

\keywords{Asymmetric L--space knot, quasi-alternating surgery, Montesinos trick, census}

\maketitle


\section{Introduction}\label{sec:intro}

A knot $K$ in the $3$--sphere is called an \textit{L--space knot\/} if $K$ admits a non-trivial Dehn surgery yielding an L--space.
If the symmetry group of the knot complement is trivial, then a knot is said to be \textit{asymmetric}.
The first examples of asymmetric L--space knots were found by Baker and Luecke \cite{BL}.
Among them, the simplest one is the closure of a positive $12$--strand braid of length $249$, which has genus $119$,
as they mentioned in \cite[Example 1.7 and Figure 1]{BL}.

Among $1267$ hyperbolic knots in the SnapPy census, Dunfield \cite{D1,D2} gave $630$ L--space knots, leaving two as undetermined.
Later, these two are confirmed as L--space knots by  \cite{BKM}.
Thus there are exactly $632$ hyperbolic L--space knots in the census at the moment.
In \cite{ABG}, the computation of isometry groups by SnapPy \cite{CD} reveals that
there are exactly $9$ asymmetric L--space knots among the census knots:
\[
\begin{split}
& t12533, t12681, o9\_38928, o9\_39162, o9\_40363,   \\
& o9\_40487, o9\_40504, o9\_40582, o9\_42675.
\end{split}
\]
The simplest $t12533$ is the closure of a $4$--braid of length $27$, having genus $12$.
The remaining $623$ are strongly invertible.

Based on the SnapPy calculation,
Baker, Kegel and McCoy \cite[Theorem 4.2]{BKM}  verify that
each of the above asymmetric  L--space knots admits exactly two quasi-alternating surgeries,
which yield the double branched covers of quasi-alternating knots or links.
Table \ref{table:QA} shows the list of these surgeries quoted from \cite{BKM}.
All branching sets are known to be quasi-alternating, but it is not hard to verify the fact directly.

\begin{table}[ht]
 \caption{Quasi-alternating surgeries of asymmetric census L--space knots.}
\begin{tabular}{lll | lll}
\hline
knot & slope & branching set & knot & slope & branching set \\
\hline
$t12533$ & 37 & $K12n407$  & $o9\_40487$ & 37 & $K11n147$ \\
  & 38 & $L12n789$  &  &  38 & $L11n152$ \\
 \hline
$t12681$ & 61 & $K11n89$  & $o9\_40504$ & 58 & $L11n179$ \\
 & 62 & $L11n172$  &  & 59 & $K11n166$ \\
 \hline
 $o9\_38928$ & 49 & $K11n172$  & $o9\_40582$ & 43 & $K13n2958$ \\
  & 50 & $L11n178$  &  & 44 & $L13n4413$ \\
 \hline
$o9\_39162$ & 64 & $L12n1050$  & $o9\_42675$ & 46 & $L12n702$ \\
 & 65 & $K12n278$  & & 47 & $K12n730$\\
 \hline
$o9\_40363$ & 82 & $L12n785$  & &  & \\
 & 83 & $K12n479$  & & & \\
 \hline
\end{tabular}\label{table:QA}
\end{table}

The purpose of this article is to confirm these  $18$ quasi-alternating surgeries by the Montesinos trick \cite{M}.
Needless to say, we cannot apply the Montesinos trick directly for asymmetric knots.
It is necessary to go through strongly invertible surgery diagrams for these surgeries.
This is the most difficult part of the study.
Also, we can say that our argument provides a proof for these asymmetric knots to be L--space knots without relying on computers.

For those asymmetric L--space knots, the braid words are given in \cite{ABG} (and \cite{BK}), but we record them for readers' convenience.
(See Table \ref{table:braid}.)
  The integer $i$ denotes the standard braid generator $\sigma_i$.)
Since each knot is given as the closure of a positive braid,
its quasi-alternating surgeries are positive for this expression.

\begin{table}[ht]
\caption{Braid words for asymmetric census L--space knots.}
\begin{tabular}{ll}
 \hline
 knot & braid word \\
 \hline
$t12533$ & 
[1, 1, 2, 2, 1, 2, 2, 2, 2, 2, 2, 2, 2, 2, 1, 2, 2, 3, 2, 1, 1, 2, 2, 1, 3, 2, 2]\\
 $t12681$ &
[1, 2, 3, 4, 4, 3, 2, 3, 2, 4, 2, 1, 1, 1, 2, 1, 3, 2, 1, 3, 2, 3, 4, 3, 2, 4, 1, \\
&3, 2, 4, 3, 4,  4, 3, 2, 4, 1, 3, 2, 4, 3, 4, 4, 3, 2, 4, 3, 4] \\
 $o9\_38928$ &
[1, 2, 1, 2, 3, 2, 4, 2, 3, 4, 4, 5, 4, 3, 5, 2, 3, 3, 2, 1, 3, 2, 3, 3, 2, 4, 2,\\
&3, 2, 4, 3, 5, 4, 3, 5, 3, 2, 4, 1, 2, 2, 3, 3]\\
$o9\_39162$ &
[1, 1, 2, 1, 3, 2, 4, 2, 5, 1, 3, 2, 2, 3, 2, 4, 2, 5, 2, 4, 3, 2, 4, 2, 5, 4, 3, \\
&5, 2, 4, 2, 5, 2, 4, 3, 2, 2, 3, 3, 2, 2, 1, 2, 3, 4, 3, 2, 4, 5, 4, 3, 4, 3]\\
$o9\_40363$ &
[1, 2, 1, 3, 4, 5, 4, 4, 4, 5, 4, 6, 3, 6, 2, 5, 4, 3, 5, 4, 6, 5, 6, 6, 5, 4, 6, \\
&3, 5, 2, 4, 1, 3, 5, 2, 4, 6, 3, 5, 4, 6, 5, 6, 6, 5, 4, 6, 3, 5, 2, 4, 6, 1, 3, \\
&5, 2, 4, 6, 3, 5, 4, 6, 5, 4, 6, 3, 5, 2, 4, 3, 5, 4]\\
$o9\_40487$ &
[1, 2, 1, 3, 3, 2, 2, 3, 4, 3, 2, 1, 3, 2, 1, 3, 2, 4, 2, 4, 1, 4, 2, 1, 3, 2, 3, \\
&4, 3, 4, 3, 2]\\
%
$o9\_40504$ &
[1, 1, 2, 1, 3, 4, 3, 4, 3, 5, 4, 3, 5, 2, 4, 1, 3, 1, 2, 1, 3, 4, 5, 4, 3, 5, 4, \\
&3, 5, 5, 5, 4, 3, 2, 1, 3, 4, 5, 4, 4, 5, 4, 3, 2, 4, 3, 4]\\
$o9\_40582$ &
[1, 2, 2, 3, 2, 2, 3, 4, 3, 2, 1, 2, 3, 2, 4, 4, 3, 3, 2, 1, 3, 3, 3, 2, 2, 3, 4, \\
&3, 2, 2, 1, 2, 2, 3, 2, 3]\\
 $o9\_42675$  &
[1, 2, 1, 3, 2, 4, 2, 3, 2, 4, 2, 3, 2, 3, 2, 1, 2, 3, 3, 4, 4, 3, 3, 4, 3, 3, 2, \\
&1, 3, 2, 4, 2, 3, 2, 1, 3]\\
 \hline
\end{tabular}\label{table:braid}
\end{table}

Also, surgery descriptions for these asymmetric L--space knots are given in \cite[Table 2]{ABG}.
However, we found that some of them are not suitable for this study.
In this paper, we use the following (Table \ref{table:surgery}).
For the notation, see Section \ref{sec:t12533}.
The diagrams of these links are illustrated in Fig.~\ref{fig:start}.

\renewcommand{\arraystretch}{1.2}
\begin{table}[ht]
\caption{Surgery descriptions for asymmetric census L--space knots.
The knot corresponds to the component with $*$.}
\begin{tabular}{lll}
 \hline
knot & surgery description & \\
\hline
$t12533$ & $L14n58444(\frac{5}{2},\frac{1}{2},*)$ & $L12n1638(-\frac{2}{3},*,-1)$ \\
$t12681$ & $L12n1968(\frac{1}{3},\frac{7}{2},*)$ &  $L14n63000(\frac{2}{3},*,\frac{1}{2},1)$ \\
 $o9\_38928$ &  $L11n456(-\frac{3}{4},-2, *,-2)$ & $L13n8037(-\frac{1}{3}, -2, *)$      \\
 $o9\_39162$ &  $L12n1968(\frac{1}{4},5,*)$ & $L13n9366(*,\frac{3}{4},5)$     \\
 $o9\_40363$ &  $L14n63000(\frac{3}{4},*, \frac{1}{2},1)$ & $L12n1968(*,\frac{7}{2},\frac{1}{4})$    \\
 $o9\_40487$ &  $L13n8037(-\frac{1}{2},-1, *)$ &   $L12n1625(-\frac{1}{3},-4,*)$  \\
 $o9\_40504$ &    $L14n62791(-\frac{1}{3},-2,\frac{1}{2},*)$ &  $L11n456(-\frac{3}{5},-2,*,-2)$     \\
 $o9\_40582$ &      $L12n1638(-\frac{2}{3},*,-2)$ &    $L14n58444(-1,-2,*)$   \\
  $o9\_42675$  &       $L12n1625(-\frac{1}{4},-3,*)$ &  $L14n63014(-4,-\frac{3}{2},*,-1)$       \\
 \hline
\end{tabular}\label{table:surgery}
\end{table}

\renewcommand{\arraystretch}{1}

Although these surgery descriptions can be confirmed in SnapPy, 
we describe a proof by hand.

\begin{figure}[th]
\centerline{\includegraphics[bb=0 0 559 710, width=12cm]{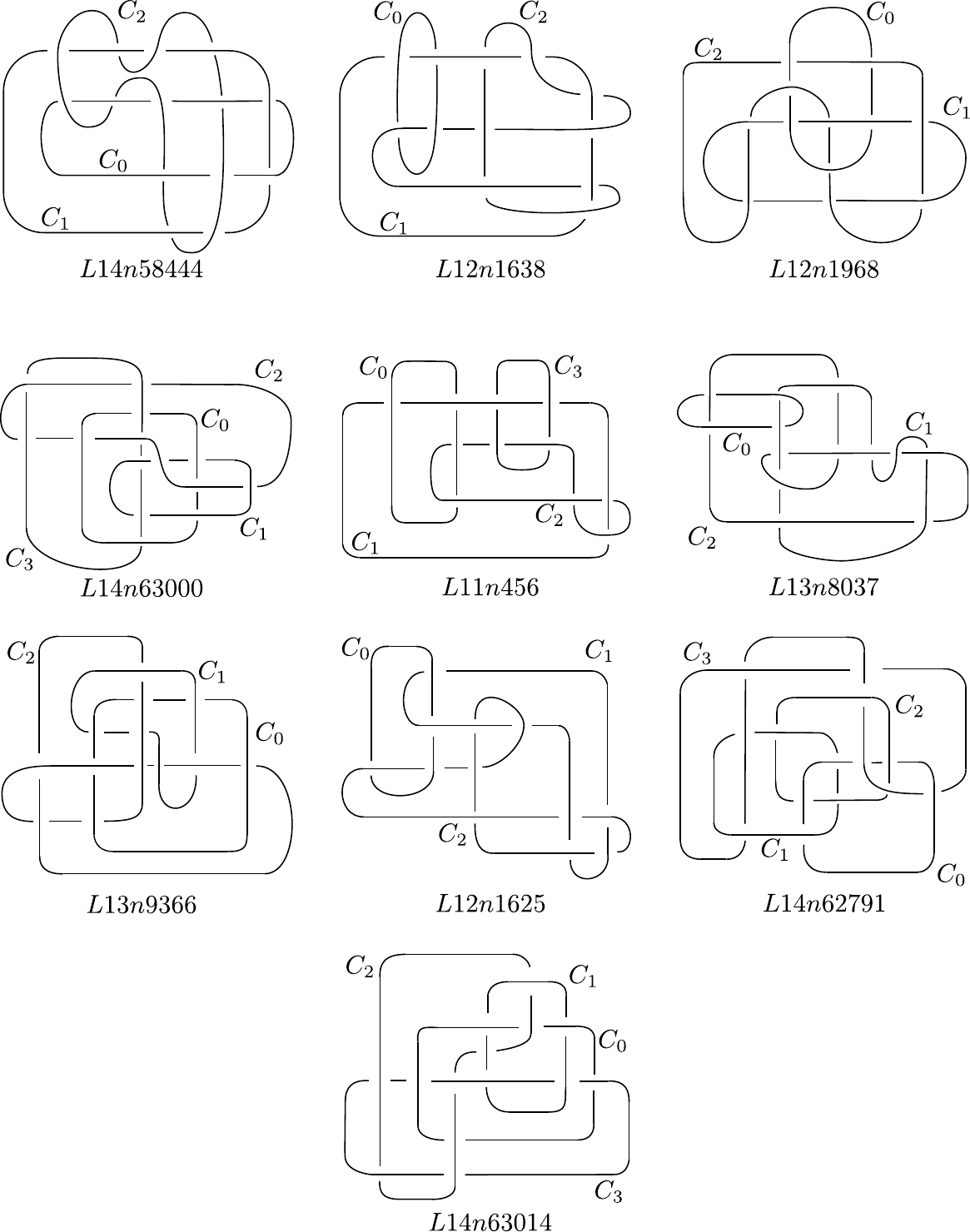}}
\vspace*{8pt}
\caption{The list of links.
The order of components corresponds to SnapPy.
\label{fig:start}}
\end{figure}


\section{$t12533$}\label{sec:t12533}

\subsection{$37$--surgery}

We start from the link $L14n58444$ with components $C_0$, $C_1$ and $C_2$ as illustrated in Fig.~\ref{fig:start}.
The order of the components is $(C_0,C_1,C_2)$ as matched with the diagram in the census.
The notation $L14n58444(\frac{5}{2},\frac{1}{2},-1)$ indicates the surgery diagram whose
coefficients are assigned to the ordered components.


\begin{lemma}\label{lem:t12533-37knot}
For the surgery diagram $L14n58444(\frac{5}{2},\frac{1}{2},-1)$,
performing $\frac{5}{2}$--surgery on $C_0$ and $\frac{1}{2}$--surgery on $C_1$
changes $C_2$  into the mirror image of  $t12533$.
Hence, the surgery diagram $L14n58444(\frac{5}{2},\frac{1}{2},-1)$
 represents $(-37)$--surgery on the mirror of $t12533$.
\end{lemma}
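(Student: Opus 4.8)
The plan is to treat the statement as an exercise in Kirby calculus, reducing the rational surgeries on $C_0$ and $C_1$ to the empty diagram while keeping track of what happens to $C_2$. First I would read off from Fig.~\ref{fig:start} that $C_0$ and $C_1$ are each unknotted, and record the three pairwise linking numbers together with the way the strands of $C_2$ thread the spanning disks of $C_0$ and $C_1$. The coefficient $\frac{1}{2}$ on $C_1$ is already of the form $\frac{1}{n}$, so a single Rolfsen twist applies directly: delete $C_1$ and insert $-2$ full twists into everything passing through its disk. The coefficient $\frac{5}{2}$ on $C_0$ cannot be cleared by one twist, so the key structural fact I would want to verify is that $\mathrm{lk}(C_0,C_1)=\pm1$; if so, then the $-2$ twist coming from $C_1$ shifts the framing of $C_0$ from $\frac{5}{2}$ to $\frac{5-4}{2}=\frac{1}{2}$, which is again of the form $\frac{1}{n}$ and can be removed by a second Rolfsen twist.

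Carrying out these two twists in order returns the ambient manifold to $S^3$ and leaves only $C_2$, now knotted and reframed. I would then compute the new framing using the standard twist formulas: a $-2$ twist about an unknot linking $C_2$ with multiplicity $\ell$ adds $-2\ell^2$ to the framing of $C_2$ and $-2\ell\,\ell'$ to each relevant linking number. Starting from the coefficient $-1$ on $C_2$ and feeding through the two twists, I expect the framing to accumulate to exactly $-37$; with $\mathrm{lk}(C_2,C_1)=3$ and the post-twist linking with $C_0$ also of magnitude $3$, this bookkeeping gives $-1-2\cdot 3^2-2\cdot 3^2=-37$, which is the promised coefficient and is consistent with the sign flip forced by passing to the mirror of a positive-braid knot.

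It remains to identify the knot type of the twisted $C_2$. I would isotope the resulting diagram into a braid closure and compare it, letter by letter, with the braid word for $t12533$ listed in Table~\ref{table:braid}; since the twists I inserted are left-handed, I anticipate obtaining the mirror of that positive braid, that is, the mirror image of $t12533$. The main obstacle is this last identification: turning the diagram that emerges after the two Rolfsen twists into a recognizable normal form and certifying the isotopy to (the mirror of) the census knot entirely by hand, rather than appealing to SnapPy. Keeping the chirality and framing-sign conventions consistent throughout the twisting, so that one genuinely lands on $-37$ and on the mirror rather than on $t12533$ itself, is the delicate point that I would need to handle with care.
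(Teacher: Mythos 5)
Your Kirby-calculus skeleton coincides with the paper's own proof: it likewise first performs $-2$ Rolfsen twists along $C_1$ (sending the coefficients of $C_0$ and $C_2$ to $\frac{1}{2}$ and $-19$), then $-2$ twists along $C_0$ (sending $C_2$ to $-37$), and your bookkeeping $-1-2\cdot 3^2-2\cdot 3^2=-37$, together with the requirement $\mathrm{lk}(C_0,C_1)=\pm 1$ so that $\frac{5}{2}$ drops to $\frac{1}{2}$, matches the paper exactly.

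However, the step you defer---identifying the twisted $C_2$ with the mirror of $t12533$---is where essentially all of the paper's work lies, and the method you describe would fail as stated. After the two twists the paper isotopes $C_2$ to the closure of the $4$--braid $[-1,-1,-2,-2,-1,3,2,(-3,-2,-1)^8]$ (up to conjugacy), and this word is \emph{not} the letter-by-letter mirror of the word for $t12533$ in Table~\ref{table:braid}: the two closures agree only up to conjugacy in the braid group combined with the generator flip $\sigma_i\mapsto\sigma_{4-i}$. The paper certifies the identification by exhibiting an explicit conjugator $\alpha=[-2,-2,-3,-1,-2,3,3,-1]$ with $\alpha^{-1}\beta\alpha=[3,3,2,2,3,-1,-2,(1,2,3)^8]$, where $\beta$ is the tabulated word, using crucially that the full twist $(1,2,3)^8$ is central, and then applying the flip $\sigma_i\mapsto\sigma_{4-i}$ to land on the mirror of the word above. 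Without producing such a conjugator (or some other hand-checkable certificate of conjugacy of the two braids), a ``letter by letter'' comparison cannot succeed---the words have no reason to coincide literally---so your proposal has a genuine gap at precisely the point you flagged as the main obstacle, and closing it requires the explicit braid-group manipulation that constitutes the bulk of the paper's argument.
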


\begin{proof}
After $-2$ twists along $C_1$ to erase it,  the surgery coefficient on $C_2$ (resp. $C_0$) is changed to $-19$ (resp. $\frac{1}{2}$).  See Fig.~\ref{fig:t12533-37}.
(We keep the same name for remaining components after twisting throughout the paper.
Also, a box with integer $i$ indicates right handed or left handed $|i|$ full twists on the strands, according to the sign of $i$.)
Then $-2$ twists along $C_0$ changes the coefficient on $C_2$ to $-37$. 
Furthermore, we can deform $C_2$ to the closure of a $4$--braid
\[
[(-3,-2,-1)^8,-2,-2,-1,3,2,-1,-1].
\]
as shown in Fig.~\ref{fig:t12533-37-2}.
Here, a negative integer  $i$ indicates the inverse $\sigma_i^{-1}$ and a power means a repetition.
Clearly, this is conjugate to
\begin{equation}\label{eq:t12533-37}
[-1,-1,-2,-2,-1,3,2,(-3,-2,-1)^8].
\end{equation}
Note that  $(-3,-2,-1)^8$  corresponds to $(-2)$--full twists, so it is central.

\begin{figure}[th]
\centerline{\includegraphics[bb=0 0 430 173, width=10cm]{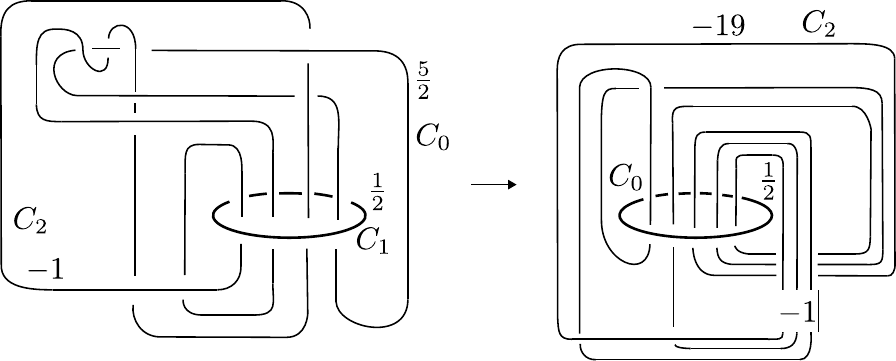}}
\vspace*{8pt}
\caption{For the link diagram $L14n58444(\frac{5}{2},\frac{1}{2},-1)$, do $-2$ twists on $C_1$ to erase it.
The box with integer $-1$ indicates $-1$  (left handed) full twist on the strands.
\label{fig:t12533-37}}
\end{figure}

\begin{figure}[th]
\centerline{\includegraphics[bb=0 0 514 331, width=12cm]{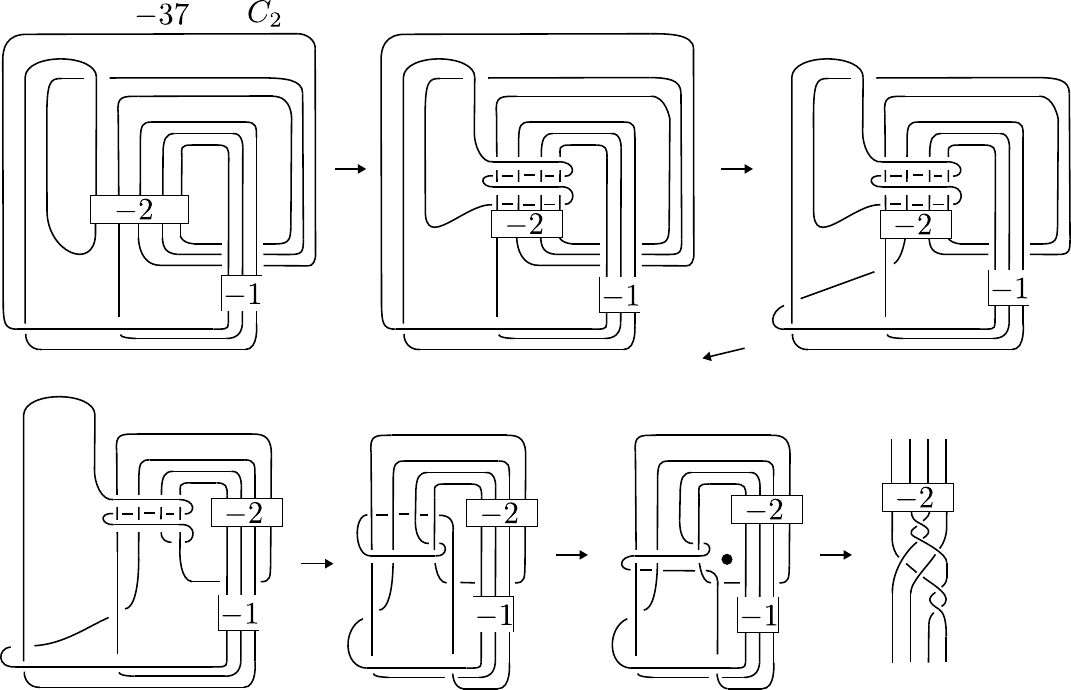}}
\vspace*{8pt}
\caption{$C_2$ is deformed into the closure of a $4$--braid.
A box with a negative integer $i$ contains $i$ full twists.
A black dot indicates the axis of the closed braid.
\label{fig:t12533-37-2}}
\end{figure}

We need to identify it as the mirror of $t12533$.
In Table \ref{table:braid},
the braid word of  $t12533$ is
\[
\beta=[1, 1, 2, 2, 1, 2, 2, 2, 2, 2, 2, 2, 2, 2, 1, 2, 2, 3, 2, 1, 1, 2, 2, 1, 3, 2, 2].
\]

Set $\alpha=[-2,-2,-3,-1,-2,3,3,-1]$.
Then we see 
\[
\alpha^{-1}\beta \alpha=[3,3,2,2,3,-1,-2,(1,2,3)^8].
\]

%
By renaming the generators $\sigma_i$ with $\sigma_{4-i}$ $(\, i=1,2,3)$,
this gives 
\[
[1,1,2,2,1,-3,-2,(3,2,1)^8],
\]
which is the mirror of (\ref{eq:t12533-37}).
\end{proof}


\begin{theorem}\label{lem:t12533-37}
The diagram $L14n58444(\frac{5}{2},\frac{1}{2},-1)$
represents the double branched cover of $K12n407$.
\end{theorem}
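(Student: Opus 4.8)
The plan is to realize the surgery diagram $L14n58444(\frac{5}{2},\frac{1}{2},-1)$ as a double branched cover by exhibiting a strongly invertible structure and applying the Montesinos trick. By Lemma \ref{lem:t12533-37knot}, we already know this diagram represents $(-37)$--surgery on the mirror of $t12533$, so the goal is to identify the branching set in $S^3$ as the quasi-alternating knot $K12n407$. First I would locate an involution $\tau$ of the link $L14n58444$ that acts compatibly with the surgery coefficients $(\frac{5}{2},\frac{1}{2},-1)$: each rational coefficient must be preserved by $\tau$, and the axis of the involution must meet the components in the standard way required for the Montesinos trick. Although the knot $t12533$ itself is asymmetric, the whole surgery presentation on the three-component link can admit a symmetry, which is precisely why we route through this diagram rather than working with the knot directly.

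Once the strong inversion $\tau$ is in hand, the next step is to take the quotient of $S^3$ under $\tau$. The quotient is again $S^3$ (since $\tau$ is an involution of $S^3$ with quotient $S^3$), and the image of the fixed-point axis together with the images of the surgery solid tori produces a tangle decomposition. Performing the prescribed surgeries upstairs corresponds downstairs to rational tangle replacements: each surgery coefficient $\frac{p}{q}$ on a component of the axis-invariant link determines a rational tangle to be inserted. Concretely, I would track how the $\frac{5}{2}$--, $\frac{1}{2}$--, and $(-1)$--surgeries descend to tangle fillings, and then read off the resulting link in $S^3$ by straightening the diagram. The double branched cover of this resulting link is, by the Montesinos trick, exactly the $3$--manifold obtained by the upstairs surgery, namely $(-37)$--surgery on the mirror of $t12533$.

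The main obstacle will be the concrete isotopy that carries the quotient link into a recognizable diagram of $K12n407$ (up to mirroring, consistent with the orientation conventions already fixed in Lemma \ref{lem:t12533-37knot}). This is a hands-on diagrammatic computation: after inserting the rational tangles one must simplify the resulting projection, likely using a sequence of flypes and Reidemeister moves, until the standard $K12n407$ diagram emerges. I would organize this by first drawing the quotient tangle, then performing the three tangle replacements in a fixed order (erasing the integer-framed $C_2$ last, paralleling the twisting sequence used in Lemma \ref{lem:t12533-37knot}), and finally comparing the resulting $12$--crossing diagram against the known $K12n407$ by computing an invariant such as the Jones polynomial or the determinant as a consistency check. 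The determinant should equal $37$, matching the order of the first homology of the $(-37)$--surgery, which gives a clean numerical verification that the branching set is correct.

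Finally, I would confirm that the identification respects mirroring: since Lemma \ref{lem:t12533-37knot} produces the mirror of $t12533$ and the surgery coefficient is $(-37)$ rather than $37$, the branching set obtained by the Montesinos trick is the double branched cover realization of $K12n407$ as stated, with any chirality ambiguity resolved by the sign conventions already established. This completes the identification and gives the theorem.
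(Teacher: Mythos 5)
Your general machinery is right (Montesinos trick, quotient, rational tangle replacement, final diagrammatic identification), but your pivotal first step fails. You propose to find an involution $\tau$ of the three-component link $L14n58444$ preserving each component and compatible with all three coefficients $\frac{5}{2}$, $\frac{1}{2}$, $-1$, asserting that ``the whole surgery presentation on the three-component link can admit a symmetry.'' It cannot: if such a $\tau$ existed, it would be equivariant for the surgeries on $C_0$ and $C_1$, hence would descend to the surgered $S^3$ and restrict to a strong inversion of the resulting knot $C_2$, which by Lemma \ref{lem:t12533-37knot} is the mirror of the asymmetric knot $t12533$ --- a contradiction. (An involution exchanging $C_0$ and $C_1$ is also ruled out, since their coefficients differ.) The paper makes exactly this point in the Remark following the theorem: the Montesinos trick cannot be applied to the diagram of $L14n58444$ directly, and finding a way around this is described as the most difficult part of the study. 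Your plan to insert rational tangles for all three coefficients downstairs, ``erasing the integer-framed $C_2$ last,'' therefore never gets off the ground, because the quotient you would take does not exist.

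The missing idea is to eliminate $C_2$ \emph{first}: performing a $+1$ Rolfsen twist along the $(-1)$--framed unknot $C_2$ absorbs that component into the diagram and changes the coefficients of $(C_0,C_1)$ to $(\frac{23}{2},\frac{19}{2})$; only the resulting \emph{two}-component link is in a strongly invertible position. One then takes the quotient by that involution, with tangle replacements computed as coefficient minus writhe in the chosen diagram ($\frac{23}{2}-5=\frac{19}{2}-3=\frac{13}{2}=[6,-2]$ for both components), and the remainder of the proof is the explicit isotopy carrying the quotient knot to the mirror of $K12n407$. Your determinant check ($\det K12n407 = 37$, matching $|H_1|$ of the $(-37)$--surgery) is a sensible consistency test but, as you note, cannot replace the identification; the real content you are missing is the reordering of the surgeries that creates the symmetry in the first place.
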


\begin{proof}
After performing $+1$ twist on $C_2$ first, we obtain the diagram  in a strongly invertible position
as in Fig.~\ref{fig:v3437}.
The coefficients of $(C_0,C_1)$ are changed to $(\frac{23}{2},\frac{19}{2})$.

\begin{figure}[th]
\centerline{\includegraphics[bb=0 0 529 185, width=12cm]{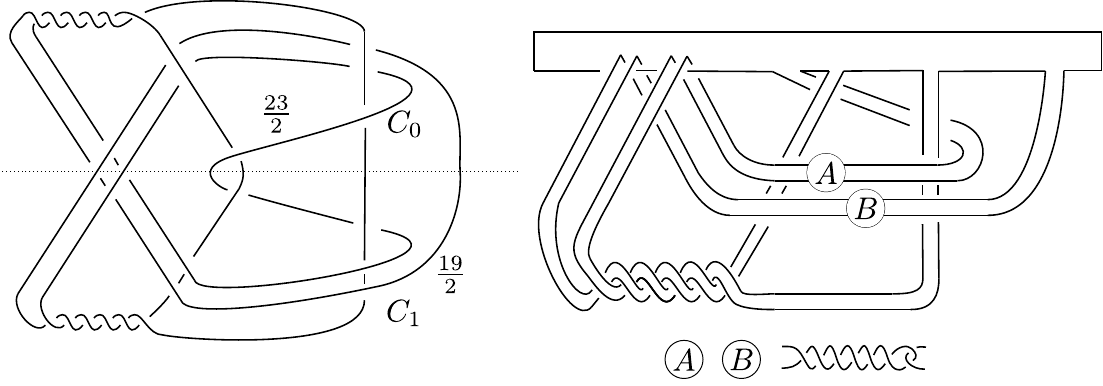}}
\vspace*{8pt}
\caption{Left: A strongly invertible position of the link after $+1$ twist on $C_2$ of $L14n58444$.
Right: The knot after the tangle replacement. The tangles $A$ and $B$ are the rational tangle $[6,-2]$.
\label{fig:v3437}}
\end{figure}

Hence we can carry out the Montesinos trick.
Note that $C_0$ and $C_1$ have writhe $5$ and $3$, respectively in the diagram.
After taking the quotient around the axis of the involution,
the tangle replacement corresponding to $C_0$ is $\frac{23}{5}-5=\frac{13}{2}=[6,-2]$.
Similarly, the tangle for $C_1$ is $\frac{19}{2}-3=\frac{13}{2}=[6,-2]$.
Thus both tangles $A$ and $B$ in Fig.~\ref{fig:v3437} are the rational tangle $[6,-2]$.
Then there is a series of deformations to confirm that the knot is the mirror of  $K12n407$.  See Fig.~\ref{fig:K12n407}.
\end{proof}


\begin{figure}[th]
\centerline{\includegraphics[bb=0 0 578 403, width=12cm]{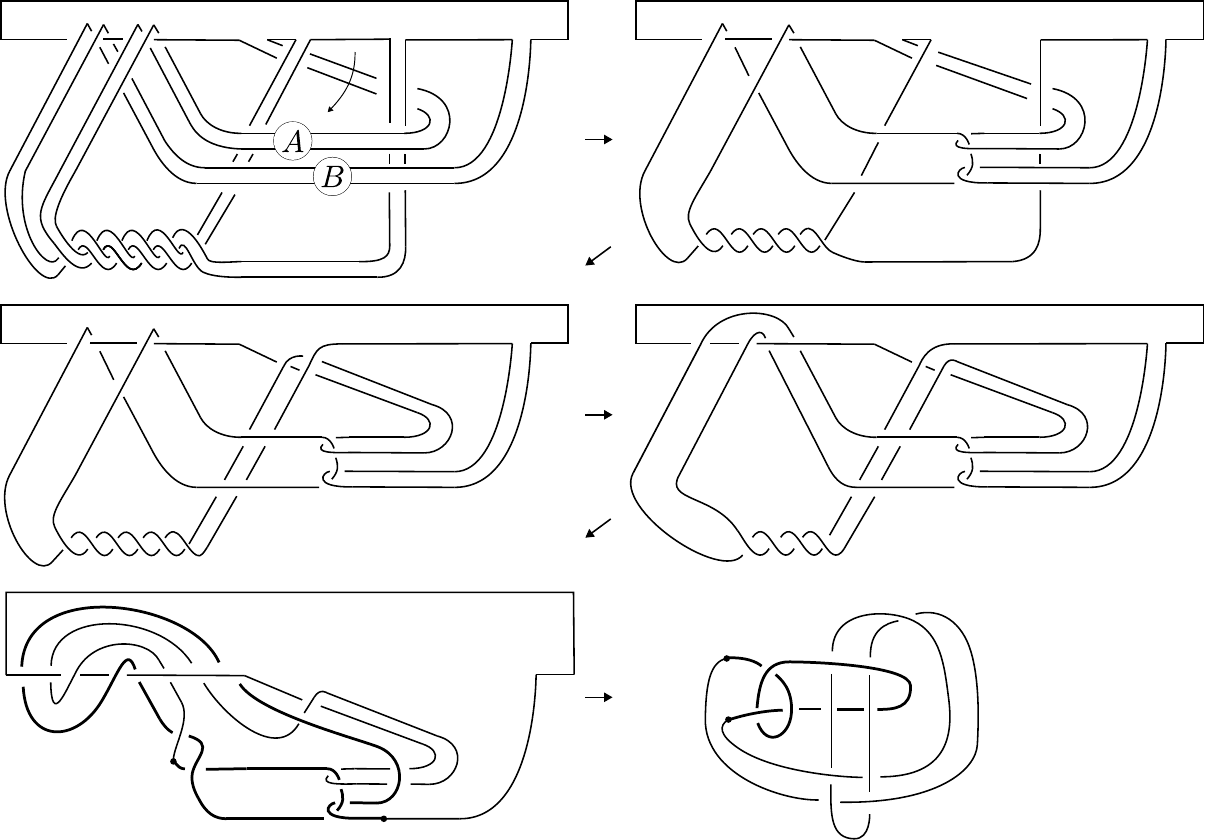}}
\vspace*{8pt}
\caption{A series of deformations from the knot in Fig.~\ref{fig:v3437} to the mirror of $K12n407$ (Bottom Right).
\label{fig:K12n407}}
\end{figure}

\begin{remark}
Our starting link $L14n58444$, is not strongly invertible, because
a partial surgery $L14n58444(\frac{5}{2},\frac{1}{2},*)$ yields  the asymmetric knot $t12533$.
(Here, $*$ means no surgery on the component.)
Hence we cannot apply the Montesinos trick to the diagram of $L14n58444$ directly.

For the others in the rest of the paper, the situation is similar.
\end{remark}

\subsection{$38$--surgery}\label{subsec:t12533-38}

For this case, we use the link $L12n1638$  shown in Fig.~\ref{fig:start}.


\begin{lemma}
$L12n1638(-\frac{2}{3}, 1, -1)$ represents $(-38)$--surgery on  the mirror of $t12533$.
\end{lemma}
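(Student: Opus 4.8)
The plan is to mirror the structure of the proof of Lemma~\ref{lem:t12533-37knot}, but starting from the link $L12n1638$ instead of $L14n58444$. The goal is to show that performing the partial surgery $L12n1638(-\frac{2}{3}, 1, *)$ turns the starred component into the mirror of $t12533$, and that the remaining surgery coefficient on that component becomes $-38$; the displayed claim about $L12n1638(-\frac{2}{3}, 1, -1)$ representing $(-38)$--surgery then follows immediately. (Note that the surgery description in Table~\ref{table:surgery} lists the coefficient $-1$ on the third component, which matches the $-1$ appearing in the present statement.)

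First I would use the $1$--surgery coefficient on the second component to eliminate it by a single twist: a $+1$ (or $-1$, depending on orientation conventions in Fig.~\ref{fig:start}) twist along that unknotted component erases it, at the cost of changing the surgery coefficients on the two strands that pass through it. I would track exactly how $-\frac{2}{3}$ on $C_0$ and the slope on the starred component $C_2$ transform under this twist, using the standard slope-change formula $p/q \mapsto p/(p n + q)$ for an $n$--fold twist linking a component. After this step I expect to be left with a two-component diagram where one component carries a modified rational coefficient and the other is the (still unidentified) knot. A further twist along the remaining auxiliary component should then both erase it and drive the coefficient on $C_2$ to exactly $-38$, in complete parallel with the $C_0$--twist that produced $-37$ in the previous lemma.

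The main work, and the main obstacle, is the final identification: after the twisting is complete, $C_2$ must be deformed isotopically into the closure of an explicit $4$--braid, and that braid must be shown to be conjugate (after the generator relabeling $\sigma_i \mapsto \sigma_{4-i}$ and mirroring) to the braid word $\beta$ for $t12533$ recorded in Table~\ref{table:braid}. As in Lemma~\ref{lem:t12533-37knot}, I would exhibit an explicit conjugating braid $\alpha$ so that $\alpha^{-1}\beta\alpha$ has the central full-twist factor $(1,2,3)^{k}$ split off, reducing the comparison to a short word; the genus $12$ of $t12533$ forces the braid length, which constrains the search for $\alpha$. Because the central power $(1,2,3)^{8} = (-2)$--full twists accounts for the bulk of the framing, I expect the arithmetic to hinge on getting that central block to agree and then matching the short residual word by hand.

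\emph{Remark on what could go wrong.} The delicate point is sign bookkeeping: whether the erasing twists are left- or right-handed, and hence whether the accumulated framing lands on $-38$ rather than $+38$ or some nearby value, depends on the linking numbers and orientations read off from Fig.~\ref{fig:start}, which I cannot compute abstractly. I would therefore fix orientations explicitly at the outset and verify that the two twists contribute framing changes summing to the difference between the starting slope on $C_2$ and $-38$. Once the $4$--braid is in hand, the conjugacy and mirror-relabeling step is routine (it is the same mechanism already validated for the $37$--surgery case), so the proof reduces to producing the correct diagrams for the two twisting moves and the braid-closure deformation.
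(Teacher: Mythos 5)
There is a genuine gap, and it is fatal to your first step: you have swapped the roles of the components. In Table~\ref{table:surgery} the surgery description is $L12n1638(-\frac{2}{3},*,-1)$, so the starred (knot) component is the \emph{second} component $C_1$, not the third. In the diagram $L12n1638(-\frac{2}{3},1,-1)$ the coefficient $1$ therefore sits on the knot that is supposed to become the mirror of $t12533$, and it is exactly this coefficient that must be carried along until it lands on $-38$; the auxiliary unknots to be blown down are $C_0$ (coefficient $-\frac{2}{3}$) and $C_2$ (coefficient $-1$). Your opening move --- ``use the $1$--surgery coefficient on the second component to eliminate it by a single twist'' --- would blow down the knot component itself, after which there is nothing left in the diagram to identify with $t12533$, and the statement you set out to prove, with the star on $C_2$, is not the one in the table. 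Moreover, even reordered, your plan cannot proceed as written: a Rolfsen twist can only erase an unknotted component whose coefficient has numerator $\pm 1$, and $-\frac{2}{3}$ does not qualify. This forces the order used in the paper: first a $+1$ twist along $C_2$ (erasing it and changing the coefficient on $C_0$ from $-\frac{2}{3}$ to $\frac{1}{3}$, since $\operatorname{lk}(C_0,C_2)=\pm 1$), and only then $-3$ twists along $C_0$ to erase it, which leaves $C_1$ as a closed braid with coefficient $-38$.

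Your closing step is also needlessly heavy, though not wrong in principle: you propose to redo a full conjugation of the resulting $4$--braid to the Table~\ref{table:braid} word for $t12533$, with the relabeling $\sigma_i\mapsto\sigma_{4-i}$ and mirroring. The paper instead deforms the closed braid and observes that it is conjugate to the braid (\ref{eq:t12533-37}) already produced and identified in Lemma~\ref{lem:t12533-37knot}, so the mirror identification comes for free from the $37$--surgery case. If you repair the component bookkeeping and the order of the two blow-downs, reusing (\ref{eq:t12533-37}) is the efficient way to finish.
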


\begin{proof}
 First, do $+1$ twist on $C_2$, which changes the coefficient on $C_0$ to $\frac{1}{3}$.  See Fig.~\ref{fig:t12533-38}.
Then performing $-3$ twists on $C_0$ immediately yields a knot which lies in a closed braid form and has coefficient $-38$.

With keeping it as a closed braid, we can deform it
as shown in Fig.~\ref{fig:t12533-38braid}.
The last braid is conjugate to (\ref{eq:t12533-37}).
\end{proof}

\begin{figure}[th]
\centerline{\includegraphics[bb=0 0 401 168, width=9cm]{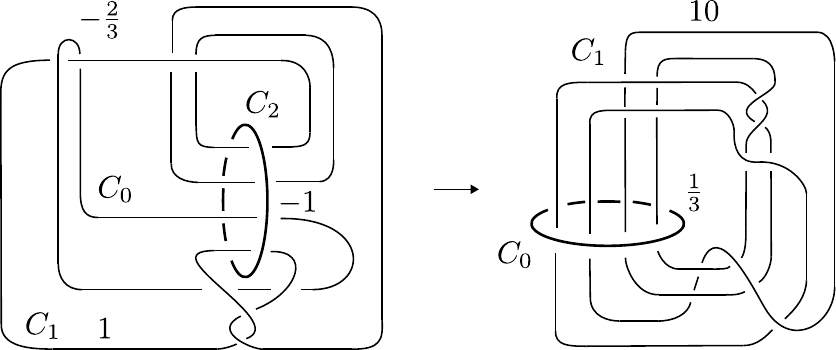}}
\vspace*{8pt}
\caption{For $L12n1638(-\frac{2}{3},1,-1)$, do $+1$ twist on $C_2$.
\label{fig:t12533-38}}
\end{figure}

\begin{figure}[th]
\centerline{\includegraphics[bb=0 0  487 315, width=12cm]{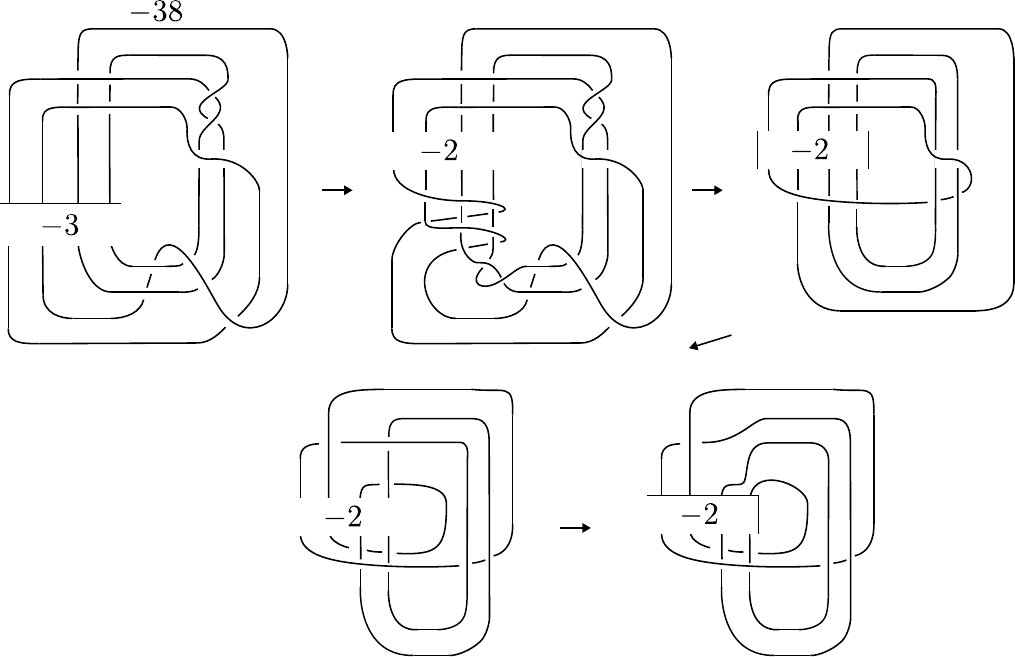}}
\vspace*{8pt}
\caption{$-3$ twists on $C_0$ yields a knot in a closed braid form.
\label{fig:t12533-38braid}}
\end{figure}

\begin{theorem}\label{lem:t12533-38}
The diagram $L12n1638(-\frac{2}{3},1,-1)$
represents the double branched cover of $L12n789$.
\end{theorem}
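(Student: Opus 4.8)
The plan is to follow the template established in the proof of Theorem~\ref{lem:t12533-37}. By the lemma immediately above, the diagram $L12n1638(-\frac{2}{3},1,-1)$ already represents $(-38)$--surgery on the mirror of $t12533$, so the underlying closed manifold is pinned down; the task is to realize this manifold as a double branched cover and to identify the branch set. As the remark following Theorem~\ref{lem:t12533-37} makes clear, $t12533$ is asymmetric, so $L12n1638$ itself admits no strong inversion compatible with the surgery: the involution is \emph{created} by the surgeries on the auxiliary components $C_0$ and $C_1$. Thus the first and decisive step is to maneuver the three--component diagram into a strongly invertible position. Concretely, I would perform a preliminary twist on one component (playing the role of the $+1$ twist on $C_2$ in the $37$--surgery case), tracking the resulting change in the framings of $C_0$ and $C_1$ through their linking numbers with the twisted component, until all three components sit symmetrically about a common axis of a strong inversion.

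With a strongly invertible position in hand, I would apply the Montesinos trick. Quotienting by the involution collapses the axis components $C_0$ and $C_1$ to rational tangle replacements and sends $C_2$ to the branch set. Exactly as in Theorem~\ref{lem:t12533-37}, each tangle is obtained as the modified surgery coefficient minus the writhe of the corresponding component in the diagram, and I would record the two resulting rational tangles explicitly as continued--fraction vectors. This yields a concrete link diagram for the branch set in the quotient orbifold.

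The final step is to recognize that branch set. I would exhibit a sequence of planar isotopies and tangle rearrangements on the quotient diagram---analogous to the chain of deformations in Fig.~\ref{fig:K12n407}---until the diagram is manifestly the mirror of $L12n789$. Since we are working with the mirror of $t12533$ at a negative slope, it is the mirror of the link recorded in Table~\ref{table:QA} that appears, which is harmless because quasi-alternating links and their double branched covers behave predictably under mirroring.

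I expect the first step to be the main obstacle. The census diagram of $L12n1638$ carries no manifest axis adapted to the surgery data, and because $t12533$ is genuinely asymmetric the symmetry only materializes after the surgeries on $C_0$ and $C_1$ are taken into account. Finding the correct preliminary twist and the correct planar rearrangement that expose a strong inversion---while keeping precise track of every framing---is the delicate heart of the argument; once the diagram is strongly invertible, the remaining tangle bookkeeping and the identification of the branch set, though lengthy, are routine.
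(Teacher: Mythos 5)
Your template is the right one---it is the same strategy the paper uses---but the proposal omits exactly the step that constitutes the proof, and it misstates how the Montesinos trick is applied here. On the first point: the paper's proof hinges on one concrete move that you never produce, namely a $-1$ Rolfsen twist on $C_1$ (the component with coefficient $1$), which \emph{erases} $C_1$ and leaves the two-component link $C_0\cup C_2$, with coefficients $(-\frac{5}{3},-10)$, already in a strongly invertible position (it is the mirror of $L14n24287(\frac{5}{3},10)$; see Fig.~\ref{fig:L14n24287}). You yourself flag the search for the symmetric position as ``the delicate heart'' and defer it; since the subsequent tangle bookkeeping is routine, what remains of the proposal is a restatement of the method of Theorem~\ref{lem:t12533-37} rather than a proof of this theorem.

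On the second point: your description of the quotient---keep all three components ``symmetrically about a common axis,'' replace $C_0$ and $C_1$ by rational tangles, and let the quotient ``send $C_2$ to the branch set''---would derail the computation if followed literally. In the Montesinos trick the branch set is the image of the \emph{axis} (the fixed-point circle of the strong inversion), never of a surgery component; every surviving surgery component quotients to a tangle replacement along that image. Here, after the preliminary twist, \emph{both} $C_0$ and $C_2$ carry coefficients and both become tangles: $-\frac{5}{3}=[-1,1,-2]$ for $C_0$, and the integer tangle $-8$ for $C_2$ (its coefficient $-10$ corrected by its writhe $-2$). Treating $C_2$ itself as the branch set amounts to assuming the surgered knot (the mirror of $t12533$) is strongly invertible, which is precisely what asymmetry rules out and what the detour through the auxiliary link is designed to circumvent. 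A smaller discrepancy: the paper identifies the quotient link as $L12n789$ on the nose (Fig.~\ref{fig:L12n789}), a two-component link of unknots distinguished by how one component wraps the other, whereas you predict the mirror; this is harmless for the unoriented statement but signals that the orientation bookkeeping was not actually carried through.
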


\begin{proof}
After $-1$ twist on $C_1$, we obtain the diagram in a strongly invertible position as in Fig.~\ref{fig:L14n24287}.
(In fact, this is the mirror of $L14n24287(\frac{5}{3},10)$.)

Take the quotient around the axis.  Then the tangle replacement for the image of $C_0$ is by the rational tangle
$-\frac{5}{3}=[-1,1,-2]$, but that of $C_2$ is $-8$, because $C_2$ has writhe $-2$ in the original diagram
(Fig.~\ref{fig:L14n24287}).

We can identify that the result is the link $L12n789$ as shown  in Fig.~\ref{fig:L12n789}.
Two links consist of only unknotted components.
By examining how one component wraps around the other,
we have the equivalence between these links.
\end{proof}

\begin{figure}[th]
\centerline{\includegraphics[bb=0 0 498 112, width=12cm]{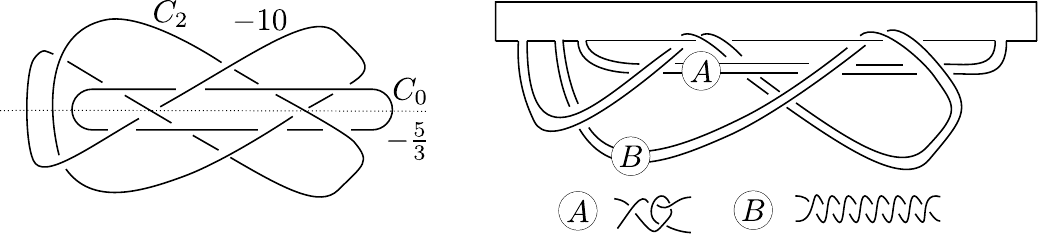}}
\vspace*{8pt}
\caption{Left: A strongly invertible position of the link after $-1$ twist on $C_1$.
Right: The link after tangle replacement.
\label{fig:L14n24287}}
\end{figure}


\begin{figure}[th]
\centerline{\includegraphics[bb=0 0 542 282, width=12.5cm]{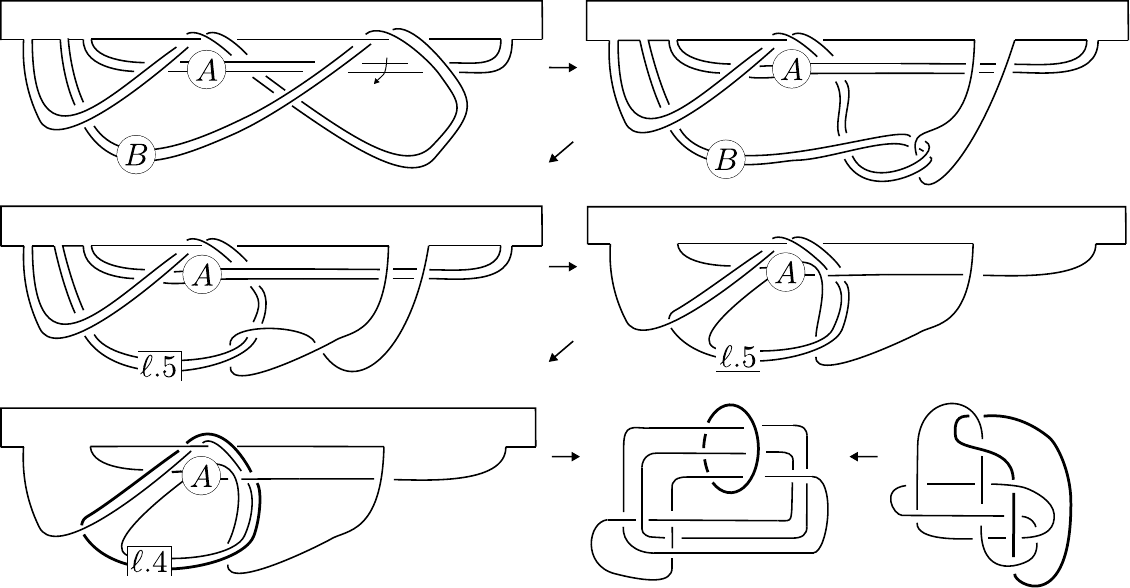}}
\vspace*{8pt}
\caption{The equivalence between the link in Fig.~\ref{fig:L14n24287} and $L12n789$ (Bottom Right).
A box labeled with $\ell.i$ contains left handed  horizontal $i$ half twists.
The  corresponding unknotted components are emphasized by ticker lines.
\label{fig:L12n789}}
\end{figure}

\section{$t12681$}\label{sec:t12681}

\subsection{$61$--surgery}

As in \cite{ABG}, we use the link $L12n1968$.  
See Fig.~\ref{fig:start}.

\begin{lemma}
$L12n1968(\frac{1}{3}, \frac{7}{2}, 1)$ represents $(-61)$--surgery on  the mirror of $t12681$.
\end{lemma}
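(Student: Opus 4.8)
The plan is to follow the same strategy as in the previous lemmas: use integer (Rolfsen) twists along $C_0$ and $C_1$ to erase these two components one at a time, converting the link into a single knot in closed braid form, while tracking how the surgery coefficient on $C_2$ evolves under each twist. Throughout I use the standard effects of an $n$--fold twist along an unknotted component $C$ with coefficient $\frac{p}{q}$: the coefficient of $C$ becomes $\frac{p}{q+np}$ (so $C$ is erased precisely when this is $\infty$); the framing of another component $C'$ changes by $n\,\mathrm{lk}(C,C')^2$; and the mutual linking number of two other components $C',C''$ changes by $n\,\mathrm{lk}(C,C')\,\mathrm{lk}(C,C'')$.

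The order of the twists is essentially forced. The component $C_1$ has coefficient $\frac{7}{2}$, whose numerator is not $\pm 1$, so it cannot be erased until its coefficient is shifted to numerator $1$; the only non-knot component linking it with an erasable coefficient is $C_0$. Conveniently, since $C_0$ has coefficient $\frac{1}{3}$, performing $-3$ twists along $C_0$ sends $\frac{1}{3}$ to $\frac{1}{0}=\infty$ and removes $C_0$, and the very same twist shifts the coefficient of $C_1$ by $-3\,\mathrm{lk}(C_0,C_1)^2$. Reading off $\mathrm{lk}(C_0,C_1)=1$ from the diagram of $L12n1968$, this changes $\frac{7}{2}$ into $\frac{1}{2}$. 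Now $C_1$ has numerator $1$, so $-2$ twists along $C_1$ erase it in turn. Tracking the coefficient on $C_2$ through both twists with the framing--change formula—using the updated value of $\mathrm{lk}(C_1,C_2)$ produced by the first twist—should give $1 \mapsto -61$, the asserted slope.

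It then remains to recognize the surviving knot $C_2$. As in the earlier cases, I would isotope it into the closure of a braid on $5$ strands (matching the strand number of the braid word $\beta$ for $t12681$) and simplify it in closed braid form. To identify it with the mirror of $t12681$, I would conjugate the resulting word by a suitable braid $\alpha$ so as to expose a central full--twist factor (a power of $(\sigma_1\sigma_2\sigma_3\sigma_4)^{5}$), and then apply the flip relabeling $\sigma_i \mapsto \sigma_{5-i}$; comparing the outcome with $\beta$ reduces the claim to a single braid--word identity. The hard part will be exactly this last step—producing the explicit closed--braid position and, above all, guessing the conjugating braid $\alpha$ that aligns our word with the length--$48$ word $\beta$. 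As in Lemma~\ref{lem:t12533-37knot}, the twist bookkeeping and the final relabeling are routine, while finding the braid position and $\alpha$ by hand is the genuine obstacle.
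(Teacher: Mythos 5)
Your surgery-calculus skeleton matches the paper's proof exactly: the paper likewise performs $-3$ twists on $C_0$ (erasing it and turning the $\frac{7}{2}$ on $C_1$ into $\frac{1}{2}$) and then $-2$ twists on $C_1$, after which $C_2$ carries coefficient $-61$, and your bookkeeping rules --- including the subtlety that $\mathrm{lk}(C_1,C_2)$ must be updated after the first twist --- are the right ones. (For the record, the numbers force $\mathrm{lk}(C_0,C_2)=2$ and an updated $\mathrm{lk}(C_1,C_2)=5$, giving $1-3\cdot 4-2\cdot 25=-61$.) Your outline of the identification step is also the paper's route: put $C_2$ in closed $5$--braid form, conjugate to expose the central full twists, and relabel $\sigma_i\mapsto\sigma_{5-i}$.

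However, there is a genuine gap, and you name it yourself: the identification of the surviving knot with the mirror of $t12681$ is only announced, not carried out, and that identification \emph{is} the content of the lemma --- the twist bookkeeping alone proves nothing about which knot $C_2$ becomes. The paper supplies exactly the data you leave as ``the genuine obstacle'': the explicit closed braid $[(-4,-3,-2,-1)^{10},-3,-3,-3,-4,-3,-3,-3,-3,-2,1]$ read off after the twists, its deformation to the mirror of $\beta=[(1,2,3,4)^{10},-1,3,2,2,2,2,3,2,4,3]$, and the conjugator $\alpha=[-2,-4,-4,-4,-3,-2,-4,-3]$ with $\alpha^{-1}\gamma\alpha=[(1,2,3,4)^{10},2,-4,2,1,3,2,3,3,3,3]$ for the census word $\gamma$. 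Note also that your toolkit is missing one move the paper actually needs: after conjugation and the flip $\sigma_i\mapsto\sigma_{5-i}$, the two words still only agree up to \emph{reversal of the word order} (legitimate since a braid word and its reverse have the same closure); conjugation and relabeling alone do not close the comparison here. Without the explicit braid position, the conjugator, and this reversal step, the proof attempt establishes only the slope $-61$, not that the knot is the mirror of $t12681$.
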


\begin{proof}
Do $-3$ twists on $C_0$.  This yields the diagram in Fig.~\ref{fig:t12681-61} (Right).
Then $-2$ twists on $C_1$ changes $C_2$ into 
a knot which is the closure of a $5$--braid
\[
[(-4,-3,-2,-1)^{10},-3,-3,-3,-4,-3,-3,-3,-3,-2,1],
\]
Its coefficient is $-61$.
Deform the knot as shown in Fig.~\ref{fig:t12681-61braid}.
Let 
\begin{equation}\label{eq:t12681-61}
\beta=[(1,2,3,4)^{10}, -1,3,2,2,2,2,3,2,4,3].
\end{equation}
Thus our knot is the mirror of the closure of $\beta$.

\begin{figure}[th]
\centerline{\includegraphics[bb=0 0 378 166, width=9cm]{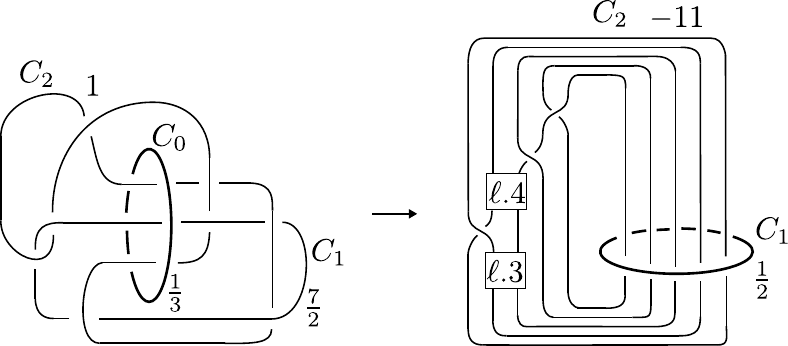}}
\vspace*{8pt}
\caption{For $L12n1968(\frac{1}{3},\frac{7}{2},1)$, do $-3$ twists on $C_0$.
A box labeled with $\ell. i$ indicates left handed vertical $i$ half twists.
\label{fig:t12681-61}}
\end{figure}

\begin{figure}[th]
\centerline{\includegraphics[bb=0 0 403 145, width=12cm]{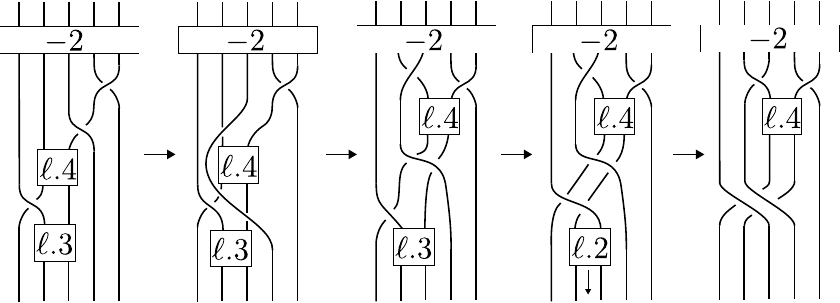}}
\vspace*{8pt}
\caption{Deform $C_2$ after $-2$ twists on $C_1$.
\label{fig:t12681-61braid}}
\end{figure}

We will verify that the closure of $\beta$ is $t12681$.
In Table \ref{table:braid}, $t12681$ is the closure of 
\[
\begin{split}
\gamma=[&1, 2, 3, 4, 4, 3, 2, 3, 2, 4, 2, 1, 1, 1, 2, 1, 3, 2, 1, 3, 2, 3, 4, 3, 2, 4, 1, 3, 2, 4, 3, 4, \\
& 4, 3, 2, 4, 1, 3, 2, 4, 3, 4, 4, 3, 2, 4, 3, 4].
\end{split}
\]
Set $\alpha=[-2,-4,-4,-4,-3,-2,-4,-3]$.
Then we see 
\[
\alpha^{-1}\gamma\alpha=[(1,2,3,4)^{10},2,-4,2,1,3,2,3,3,3,3].
\]

Note that $(1,2,3,4)^{10}$ corresponds to $2$ full twists, which is central.
By renaming the generators $\sigma_i$ with $\sigma_{5-i}$, 
it changes to $[(4,3,2,1)^{10}, 3,-1,3,4,2,3,2,2,2,2]$.
If we reverse the order of words, we can conclude that $\beta$ and $\gamma$ give the same closure $t12681$.
\end{proof}

\begin{theorem}\label{lem:t12681dbc}
The diagram $L12n1968(\frac{1}{3},\frac{7}{2},1)$ represents the double branched cover of
$K11n89$.
\end{theorem}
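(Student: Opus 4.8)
The plan is to follow the Montesinos-trick strategy already used for $t12533$ in Theorems~\ref{lem:t12533-37} and~\ref{lem:t12533-38}. Since the starting diagram $L12n1968(\frac{1}{3},\frac{7}{2},1)$ realizes surgery on the asymmetric knot $t12681$ by the previous lemma, it is not itself strongly invertible, so the first and most delicate step is to reposition it into a strongly invertible form.

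First I would perform an integral twist along the knot component $C_2$, whose coefficient is the integer $1$, in order to expose a strong inversion of the underlying three-component link, bringing $C_2$ into a symmetric position with $C_0$ and $C_1$ each invariant under the involution, as in Fig.~\ref{fig:v3437}. This twist replaces the coefficients on $C_0$ and $C_1$ by new rational values $\frac{p_0}{q_0}$ and $\frac{p_1}{q_1}$ determined by the linking numbers of $C_0$ and $C_1$ with $C_2$; I would also record the writhes $w_0$ and $w_1$ of $C_0$ and $C_1$ in the resulting diagram, since these supply the framing correction.

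Next, taking the quotient around the axis of the involution converts the two surgeries into rational tangle replacements. As in the earlier proofs, the tangle inserted at $C_i$ is the rational tangle of slope $\frac{p_i}{q_i}-w_i$, which I would expand as a continued fraction to obtain the explicit tangles $A$ and $B$. Substituting these into the quotient of the diagram yields a candidate diagram for the branch set.

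Finally I would identify this branch set with $K11n89$ (or its mirror) through an explicit series of deformations, recorded in a figure analogous to Fig.~\ref{fig:K12n407}. The principal obstacle is the very first step: the original surgery description is organized around an asymmetric knot, so the symmetric presentation is not canonical, and finding the twist that reveals the correct strong inversion requires care. Once that position is fixed, the tangle computation is routine and the concluding identification of $K11n89$ is a bounded diagrammatic verification.
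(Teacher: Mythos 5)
Your overall strategy---twist along $C_2$, pass to a strongly invertible position, take the quotient, replace each surgery by the rational tangle of slope equal to coefficient minus writhe, and identify the branch set diagrammatically---is exactly the paper's. But your first step, as stated, cannot work: you propose that the integral twist puts the \emph{three-component} link into a strongly invertible position with $C_2$ itself invariant under the involution. This is impossible, for the reason recorded in the Remark in Section \ref{sec:t12533}: a Rolfsen twist does not change the knot obtained by the partial surgery, so if all three components were invariant the involution would descend to $L12n1968(\frac{1}{3},\frac{7}{2},*)$ and give a symmetry of $t12681$, which is asymmetric. What actually happens (and what Fig.~\ref{fig:v3437} already illustrates in the $t12533$ case) is that, since $C_2$ carries coefficient $1$, a $-1$ twist on $C_2$ turns its coefficient into $\infty$ and \emph{erases} it; only the residual two-component link $C_0\cup C_1$ is strongly invertible. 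Here that link is the pretzel link $P(2,4,-3)=L9n14$, with new coefficients $-\frac{11}{3}$ on $C_0$ and $\frac{5}{2}$ on $C_1$, and the Montesinos trick is applied to this two-component diagram (Fig.~\ref{fig:L9n14}).

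With that correction the rest of your outline goes through as in the paper: the tangle replacements are $-\frac{11}{3}+3=-\frac{2}{3}=[0,1,-2]$ for the image of $C_0$ (which has writhe $-3$) and $\frac{5}{2}=[2,-2]$ for that of $C_1$. You also overestimate the final identification: because the quotient base comes from a pretzel link, the branch set is visibly the Montesinos knot $(\frac{3}{5},\frac{2}{3},-\frac{1}{4})$, and no long deformation sequence in the style of Fig.~\ref{fig:K12n407} is needed. Note finally that the answer is the \emph{mirror} of $K11n89$, consistent with the diagram representing $(-61)$--surgery on the mirror of $t12681$; your parenthetical hedge ``or its mirror'' is where the actual bookkeeping has to land.
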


\begin{proof}
For the diagram $L12n1968(\frac{1}{3},\frac{7}{2},1)$, perform $-1$ twist on $C_2$.
The result is a strongly invertible diagram as in Fig.~\ref{fig:L9n14}, which represents the pretzel link $P(2,4,-3)$.
(In SnapPy, this is $L9n14(\frac{5}{2},-\frac{11}{3})$.)

After taking the quotient, the tangle replacements are $-\frac{11}{3}+3=-\frac{2}{3}=[0,1,-2]$ for the image of $C_0$ and $\frac{5}{2}=[2,-2]$ for that of $C_1$.  See Fig.~\ref{fig:L9n14}.
For this case, it is not hard to see that the resulting knot is
 the Montesinos knot $(\frac{3}{5},\frac{2}{3},-\frac{1}{4})$, which is the mirror of $K11n89$.
\end{proof}

\begin{figure}[th]
\centerline{\includegraphics[bb=0 0 450 141, width=12cm]{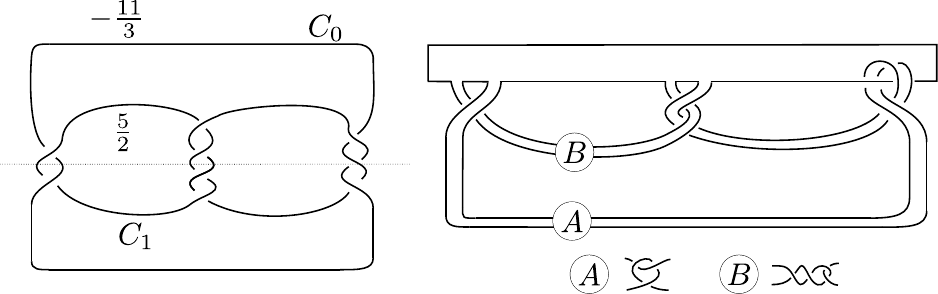}}
\vspace*{8pt}
\caption{Left: A strongly invertible diagram of the link after $-1$ twist on $C_2$.
Right:  The knot after tangle replacement.
This is the Montesinos knot $(\frac{3}{5},\frac{2}{3},-\frac{1}{4})$, which is the mirror of $K11n89$.
\label{fig:L9n14}}
\end{figure}



\subsection{$62$--surgery}


\begin{lemma}\label{lem:t12681-62}
$L14n63000(\frac{2}{3}, 1,\frac{1}{2}, 1)$ represents $62$--surgery on $t12681$.
\end{lemma}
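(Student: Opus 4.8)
The plan is to mimic the strategy used throughout the preceding lemmas: starting from the surgery diagram $L14n63000(\frac{2}{3},1,\frac{1}{2},1)$, I would eliminate the non-$C_2$ components by integer twists and reduce $C_2$ to a closed braid whose closure can be matched with $t12681$. First I would look for a component carrying an integer surgery coefficient (or one that becomes integral after a twist on another component) and perform the corresponding full twists to erase it, recording how the remaining coefficients change. Since three auxiliary components are present here (with slopes $\frac{2}{3}$, $1$, and $\frac{1}{2}$), this is a four-component link, so the bookkeeping is more involved than in the three-component cases above; I expect to erase the components one at a time, most naturally beginning with a $\pm1$-framed component, and then treating the $\frac{2}{3}$ and $\frac{1}{2}$ coefficients via the $-3$ and $-2$ (or $+2$) twist manoeuvres analogous to those in the $61$--surgery argument.

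After all auxiliary components are removed, $C_2$ should lie in a closed-braid position with some integer coefficient, which I would compute to be $62$ (the slope advertised in the statement, which is positive, matching that the quasi-alternating surgeries on $t12681$ are positive for the positive-braid expression). I would then read off the resulting braid word $\beta'$ from the diagram. To identify the closure of $\beta'$ with $t12681$, I would follow the same template as in the $61$--surgery proof: conjugate $\beta'$ by an explicitly chosen braid $\alpha$ to bring it into a normal form in which the full-twist factor $(1,2,3,4)^{10}$ (the central element) is separated out, possibly rename generators via $\sigma_i \mapsto \sigma_{5-i}$, and reverse the word order, comparing the result to the known braid word $\gamma$ for $t12681$ from Table~\ref{table:braid}.

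The central difficulty will be twofold. First, because this is a four-strand-elimination problem rather than the two-or-three-component reductions handled earlier, the twist bookkeeping is error-prone: each twist on one component alters the framing on every strand passing through it, and the slopes must be tracked carefully through the continued-fraction arithmetic to land exactly on $62$. Second, and more seriously, the braid identification is not a routine check — finding the conjugating braid $\alpha$ that reduces $\beta'$ to the standard form for $t12681$ is essentially a search problem, and the analogous step in the $61$--surgery case required an explicitly guessed $\alpha=[-2,-4,-4,-4,-3,-2,-4,-3]$. I expect that producing such an $\alpha$ here, and verifying the conjugation together with the generator-renaming and word-reversal symmetries, will be the main obstacle, and I would anticipate that the diagrammatic deformation of $C_2$ into the relevant closed $5$--braid (the analogue of Fig.~\ref{fig:t12681-61braid}) is where the geometric input must be supplied before the purely algebraic braid comparison can proceed.
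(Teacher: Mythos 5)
Your plan targets the wrong component, and this is a genuine error rather than a cosmetic one. The surgery description in Table~\ref{table:surgery} is $L14n63000(\frac{2}{3},*,\frac{1}{2},1)$: the knot $t12681$ is the \emph{second} component $C_1$, while $C_2$ is the $\frac{1}{2}$-framed auxiliary component that must be blown down. Your proposal to ``eliminate the non-$C_2$ components by integer twists and reduce $C_2$ to a closed braid'' therefore erases the component that is supposed to become $t12681$ and keeps one that is supposed to disappear; carried out literally, the reduction would not produce $t12681$ with coefficient $62$ at all. The paper's proof instead performs a $-1$ twist on $C_3$, then $3$ twists on $C_0$, then $2$ twists on $C_2$, leaving $C_1$ as a knot in closed braid position with coefficient $62$ (Fig.~\ref{fig:t12681-62}). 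Your anticipated signs are also off in part ($+3$ twists on $C_0$, not $-3$), though that is the kind of detail the bookkeeping would correct.

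Separately, even granting the component fix, the proposal defers all of the substantive content: no twist arithmetic is carried out, no braid word is read off, and you identify the search for a conjugating braid $\alpha$ as the main obstacle without producing one. In fact that obstacle does not arise here, and recognizing this is the one idea the proof actually needs beyond the twisting: after the blow-downs, $C_1$ is the closure of $[(1,2,3,4)^{10},3,4,2,3,2,2,2,3,-1,4]$, and a single conjugation by $\sigma_4^{-1}$ gives a word sharing its closure with the braid $\beta$ of (\ref{eq:t12681-61}), which was \emph{already} identified with $t12681$ in the $61$-surgery lemma. No fresh comparison with the word $\gamma$ from Table~\ref{table:braid}, no generator renaming $\sigma_i\mapsto\sigma_{5-i}$, and no word reversal are needed. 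So the template you propose to replay is heavier machinery than the situation requires, and the missing observation --- reduce to the previously established identification rather than redoing it --- is precisely what makes the paper's proof three lines long.
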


\begin{proof}
Do $-1$ twist on $C_3$, and then $3$ twists on $C_0$.  See Fig.~\ref{fig:t12681-62}.
Finally, performing $2$ twists on $C_2$ changes $C_1$ into
a knot in a closed braid form with coefficient $62$.
Its braid is $[(1,2,3,4)^{10}, 3,4,2,3,2,2,2,3,-1,4]$, which is conjugate to $[(1,2,3,4)^{10},3,4,2,3,2,2,2,2,3,-1]$
by $\sigma_4^{-1}$.
Then this braid shares the same closure as $\beta$ defined in (\ref{eq:t12681-61}).

\end{proof}

\begin{figure}[th]
\centerline{\includegraphics[bb=0 0 525 119, width=12cm]{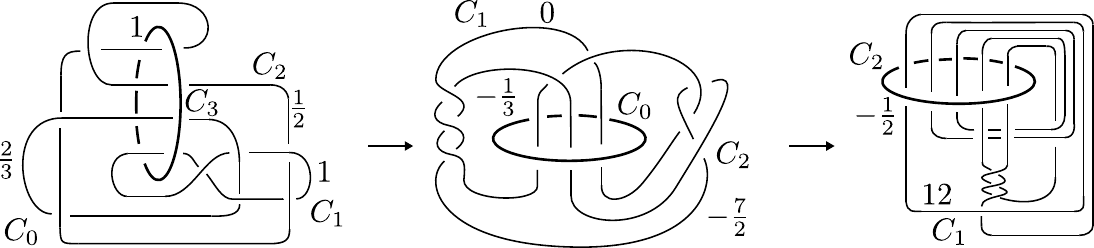}}
\vspace*{8pt}
\caption{Do $-1$ twist on $C_3$ and $3$ twists on $C_0$.
\label{fig:t12681-62}}
\end{figure}

\begin{figure}[th]
\centerline{\includegraphics[bb=0 0 206 109, width=5cm]{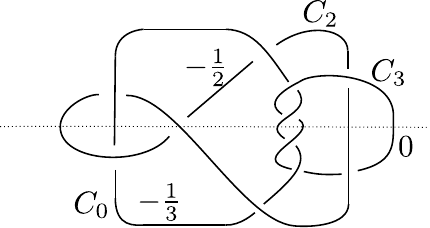}}
\vspace*{8pt}
\caption{A strongly invertible position of the link after $-1$ twist on $C_1$.
\label{fig:L14n63000}}
\end{figure}

\begin{theorem}\label{thm:t12681-62}
The diagram $L14n63000(\frac{2}{3},1,\frac{1}{2},1)$ represents the double branched cover of
$L11n172$.
\end{theorem}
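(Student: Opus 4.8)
The plan is to follow the same Montesinos-trick template used for all the preceding theorems, now applied to the four-component surgery diagram $L14n63000(\frac{2}{3},1,\frac{1}{2},1)$, which by Lemma \ref{lem:t12681-62} already represents $62$--surgery on $t12681$. First I would put the link into a strongly invertible position. The caption of Fig.~\ref{fig:L14n63000} indicates that performing $-1$ twist on $C_1$ achieves this, so the opening move is to carry out that twist and record how the surgery coefficients on the remaining components $C_0$, $C_2$, $C_3$ are updated by the twisting (using the usual continued-fraction/slam-dunk bookkeeping). The branched knot $t12681$ sits on the component with the asterisk, and the other three unknotted components carry rational coefficients that will become the tangle-replacement data.

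Next I would pass to the quotient orbifold by the strong inversion and read off the rational tangle corresponding to each of the three surgery components. As in the $61$--surgery case (Theorem \ref{lem:t12681dbc}), each tangle is obtained from the post-twist surgery coefficient by subtracting the writhe of that component in the strongly invertible diagram; I would compute these three corrected slopes and express each as a rational tangle in the bracket notation $[a_1,a_2,\dots]$. This converts the double branched cover into an explicit tangle sum, i.e.\ a Montesinos-type link with three rational parameters. The claim $L11n172$ should then emerge as the corresponding Montesinos link (possibly up to mirror image, as happened with $K11n89$).

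The final step is the identification: starting from the tangle-sum diagram, perform a sequence of planar isotopies, flypes, and rational-tangle normalizations to match it against a standard diagram of $L11n172$. Since $L11n172$ is a link (two components) rather than a knot, I expect, as in the proof of Theorem \ref{lem:t12533-38}, that both components can be arranged to be unknotted, so the identification reduces to checking how the two components wrap around each other. I would supply a figure analogous to Fig.~\ref{fig:L12n789} exhibiting this equivalence.

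The hard part will be the bookkeeping of the twist on $C_1$ in a four-component diagram: with three other coefficients to track, an error in any continued-fraction update or in any writhe count propagates into the wrong tangle slopes and hence the wrong Montesinos link. In particular, determining the correct writhe of each component in the strongly invertible diagram of Fig.~\ref{fig:L14n63000}, and fixing the overall mirror convention so that the end result is $L11n172$ rather than its mirror, is where care is most needed; the subsequent tangle manipulations are routine once the three rational parameters are correct.
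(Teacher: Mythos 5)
Your plan follows the paper's proof essentially step for step: the paper likewise performs the $-1$ twist on $C_1$ (which, since that coefficient is $1$, erases $C_1$ and leaves the three-component strongly invertible link that is the mirror of $L10n92(\frac{1}{2},0,\frac{1}{3})$), takes the quotient with rational tangle replacements for the three surviving components, and identifies the resulting two-component link with the mirror of $L11n172$ by examining how one component wraps around the other. Two of your predictions come out slightly differently in execution --- the quotient does not arrive as a Montesinos link in three rational parameters but needs ad hoc diagram deformations, and its second component is the knot $5_2$ rather than an unknot, so the comparison is of how the knotted component wraps around the unknotted one --- but neither discrepancy affects your method, which is exactly the paper's.
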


\begin{proof}
In the diagram $L14n63000(\frac{2}{3},1,\frac{1}{2},1)$, perform $-1$ twist on $C_1$ first.
This yields a strongly invertible diagram as in Fig.~\ref{fig:L14n63000}, which is the mirror of $L10n92(\frac{1}{2},0,\frac{1}{3})$ in the SnapPy notation.


Take the quotient of this diagram under the involution.
Figure \ref{fig:L10n92q} shows the link after the tangle replacement.  By modifying it indicated there,
we can easily see the unknotted component.
The other component is the knot $5_2$.

\begin{figure}[th]
\centerline{\includegraphics[bb=0 0 391 123, width=12cm]{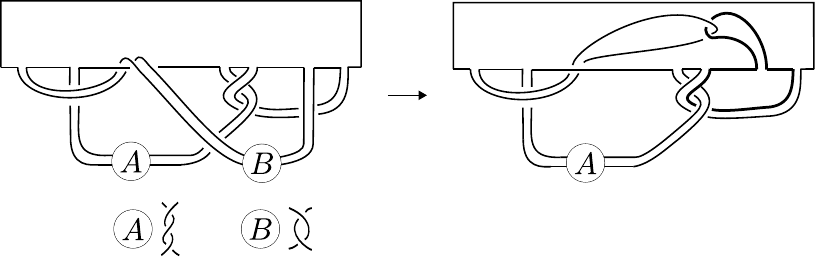}}
\vspace*{8pt}
\caption{The tangle replacement and its deformation.
The unknotted component is drawn in  a thicker line.
\label{fig:L10n92q}}
\end{figure}

We identify  this link as the mirror of $L11n172$.
For both links, look at how the knotted component wraps around the unknotted component
as shown in Fig.~\ref{fig:L11n172}.
\end{proof}

\begin{figure}[th]
\centerline{\includegraphics[bb=0 0 468 115, width=12cm]{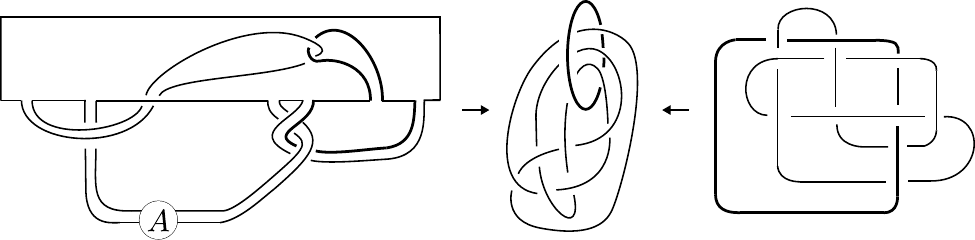}}
\vspace*{8pt}
\caption{The equivalence between the link of Fig.~\ref{fig:L10n92q} and the mirror of $L11n172$.
Right is the mirror of $L11n172$.
\label{fig:L11n172}}
\end{figure}

\section{$o9\_38928$}

\subsection{$49$--surgery}\label{subsec:o9_38928-49}

\begin{lemma}
$L11n456(-\frac{3}{4}, -2,-1,-2)$ represents $49$--surgery on $o9\_38928$.
\end{lemma}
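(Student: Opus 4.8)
The plan is to follow the same template that worked for the previous lemmas in this paper, adapted to the link $L11n456$ and the surgery coefficients $(-\tfrac34,-2,-1,-2)$ assigned to the ordered components $(C_0,C_1,C_2,C_3)$, where $C_2$ (the starred component) is to become $o9\_38928$. First I would perform integral twists along those components carrying integer coefficients in order to erase them one at a time, tracking how each twist modifies the coefficient on the remaining components. Since $C_1$ and $C_3$ carry the integer slope $-2$, a $+2$ (or $-2$, depending on orientation) twist along each of them removes that component and adjusts the framings on the neighbouring components by the appropriate linking-number-squared contribution, exactly as the $-2$ twist on $C_1$ in Lemma \ref{lem:t12533-37knot} changed the coefficients. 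The component $C_0$ carries $-\tfrac34$, so after the integer components are dealt with I would apply a twist along $C_0$ to clear its integral part, leaving the fractional residue handled by a final twist. The goal of this bookkeeping is to reduce the multi-component surgery diagram to a single knot $C_2$ lying in a closed-braid form with an explicit integer coefficient, which I expect to be $-49$ (matching the $49$--surgery claimed, up to the mirroring convention used throughout).

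Next I would read off the braid word for the resulting closed braid directly from the deformed diagram, just as the earlier proofs produce words such as the $5$--braid in \eqref{eq:t12681-61}. This braid should contain a central full-twist block (a power of $(\sigma_1\sigma_2\cdots)$) coming from the repeated framing twists, together with a short residual word. I would then compare this braid to the census braid word for $o9\_38928$ from Table \ref{table:braid}, namely the $6$--strand word beginning $[1,2,1,2,3,2,4,2,\dots]$, since $o9\_38928$ is a $6$--braid closure. The comparison is carried out by conjugating the census word $\gamma$ by an explicitly chosen braid $\alpha$, simplifying $\alpha^{-1}\gamma\alpha$ into a form with the same central full-twist block and matching residual word, and possibly applying the symmetry $\sigma_i\mapsto\sigma_{n-i}$ that reverses the strand order. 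Markov moves (conjugation, and the fact that a full twist is central) together with this renaming should identify the two closures as the same knot, possibly up to mirror image, which is the expected relationship given that the census knots are positive-braid closures and the surgeries here come out negative.

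The hard part will be finding the conjugating braid $\alpha$ that transforms the deformed braid into the census word. This is the step the author flags in the introduction as ``the most difficult part of the study,'' and there is no algorithmic shortcut visible here: one must guess the right word $\alpha$ (of the kind $[-2,-3,\dots]$ seen in the earlier lemmas) so that the conjugate collapses to the target. I would approach this by first normalising both words to expose the central full-twist factor, cancelling it from both sides, and then solving the conjugacy problem for the much shorter residual braids — in practice by trial guided by the positions where the two residual words already agree. A secondary obstacle is keeping the framing arithmetic consistent through each twist: a sign error in one linking-number contribution would throw off the final coefficient, so I would independently verify that the total twisting reproduces the claimed slope $49$ (with the correct sign under the mirroring convention) as a consistency check on the braid identification. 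Once the closed braid is confirmed to be $o9\_38928$ with coefficient $\pm49$, the lemma follows exactly as the analogous lemmas do.
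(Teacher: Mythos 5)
Your high-level template does coincide with the paper's proof: reduce the four-component diagram by Rolfsen twists to a single knot in closed-braid form with integer coefficient, then identify that braid with the census word for $o9\_38928$ by conjugation, possibly the renaming $\sigma_i\mapsto\sigma_{6-i}$, and reversal of the word (the paper arrives at $\beta=[(1,2,3,4,5)^{6},2,2,3,2,4,3,5,4,2,1,3,2,4]$, conjugates by $\alpha=[-4,-5,3]$, conjugates the census word $\gamma$ by $\alpha'=[-3,-3,-2,-1,5,3]$, and matches the two after reversing the order of words). However, your concrete first step would fail: you cannot erase $C_1$ and $C_3$ by twisting along them, because they carry coefficient $-2$. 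A Rolfsen twist of $n$ turns along an unknotted component with coefficient $p/q$ changes that coefficient to $p/(q+np)$, and the component can be deleted only when this reaches $\infty$, which forces $p=\pm 1$ (i.e.\ a coefficient of the form $1/n$); for $p/q=-2$ one gets $-2/(1-2n)$, which is never $\infty$. So the proposed ``$+2$ (or $-2$) twist along each of $C_1$ and $C_3$ to remove them'' is not a valid Kirby move, and the framing bookkeeping built on it collapses. The paper's proof is ordered precisely to circumvent this: a $+1$ twist on $C_3$ first changes the coefficient of $C_1$ from $-2$ to $-1$; a $+1$ twist on $C_1$ then erases $C_1$ and brings $C_0$ to $\frac{1}{4}$, so $-4$ twists erase $C_0$; at that stage $C_2$ and $C_3$ carry $-32$ and $-1$, and a final $+1$ twist on $C_3$ erases it, leaving $C_2$ with coefficient $49$ (Figs.~\ref{fig:o9_38928-49}--\ref{fig:o9_38928-49-3}). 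The need to twist along $C_3$ twice, with other operations in between, is exactly the kind of ordering your ``erase the integer components one at a time'' plan misses.

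A secondary discrepancy: you predict a coefficient of $-49$ and an identification with the mirror, but in this lemma the reduction comes out positive --- the knot is the closure of $\beta$ and is identified with $o9\_38928$ itself, matching the statement with no mirror involved (contrast the neighboring $50$--surgery lemma, where the mirror does appear). Your hedge ``up to the mirroring convention'' papers over a sign that should be checked, since you propose to use the final slope as a consistency check on the braid identification; with the wrong expected sign that check would misfire. Your remarks on finding the conjugator by cancelling the central full twist $(1,2,3,4,5)^{6}$ and matching residual words are consistent with what the paper actually does, but as written the proposal is a strategy outline resting on an invalid first move, not a proof.
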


\begin{proof}
Perform $+1$ twist on $C_3$, which changes the coefficient of $C_1$ to $-1$
(Fig.~\ref{fig:o9_38928-49}).
Do $+1$ twist on $C_1$ to erase it.
Then $C_0$ has coefficient $\frac{1}{4}$, so do $-4$ twists there (Fig.~\ref{fig:o9_38928-49-2}).
At this point, $C_2$ and $C_3$ have coefficients $-32$ and $-1$, respectively.
Finally, performing $+1$ twist on $C_3$ yields a knot with coefficients $49$.

\begin{figure}[th]
\centerline{\includegraphics[bb=0 0 459 178, width=10cm]{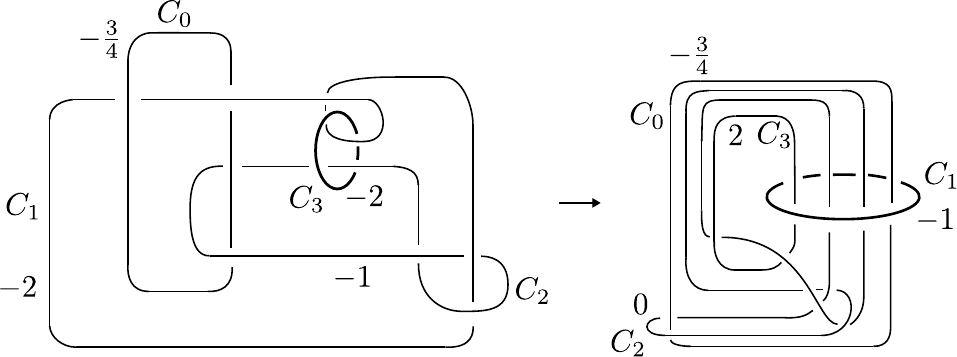}}
\vspace*{8pt}
\caption{Do $+1$ twist on $C_3$.
\label{fig:o9_38928-49}}
\end{figure}

\begin{figure}[th]
\centerline{\includegraphics[bb=0 0 392 203, width=10cm]{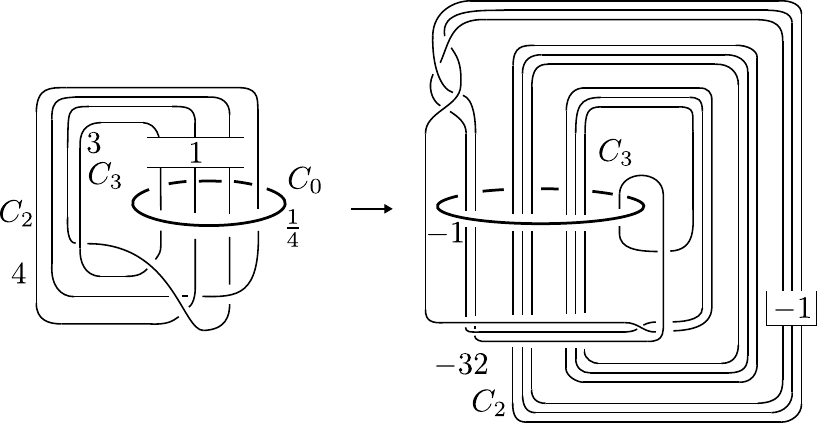}}
\vspace*{8pt}
\caption{Do $-4$ twists on $C_0$.
\label{fig:o9_38928-49-2}}
\end{figure}

We reduce this knot into the closure of  a $6$--braid  as shown in Fig.~\ref{fig:o9_38928-49-3}.
Furthermore, we deform the braid into $\beta$ as in Fig.~\ref{fig:o9_38928-49braid}, where
\[
\beta=[(1,2,3,4,5)^{6},2,2,3,2,4,3,5,4,2,1,3,2,4].
\]

By taking a conjugation with  $\alpha=[-4,-5,3]$,
\[
\alpha^{-1}\beta \alpha=
[(1,2,3,4,5)^6,5,4,3,2,1,3,4,5,4,2,1,3,2].
\]

\begin{figure}[th]
\centerline{\includegraphics[bb=0 0 641 440, width=12cm]{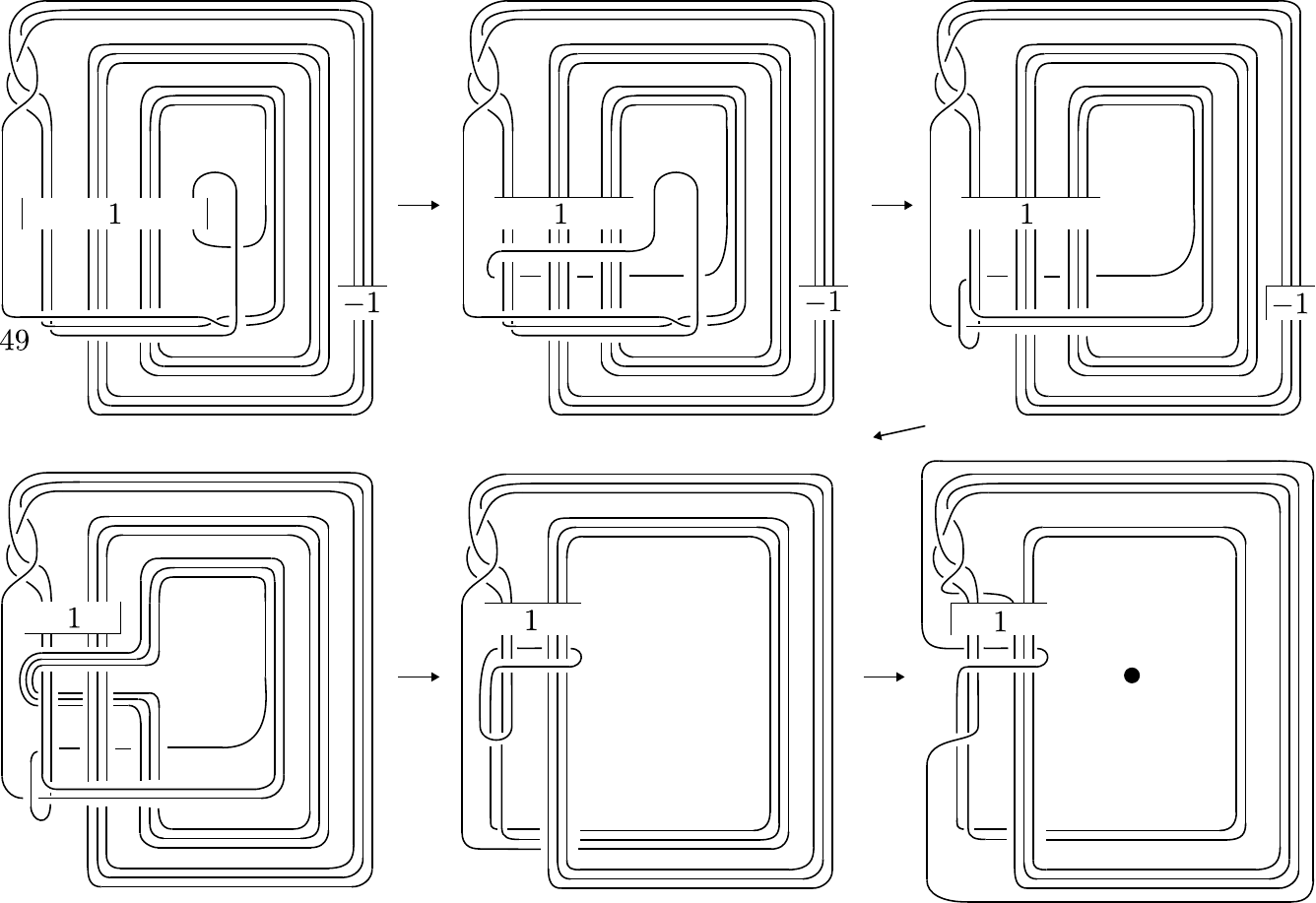}}
\vspace*{8pt}
\caption{Deform the knot into the closure of a $6$--braid.
\label{fig:o9_38928-49-3}}
\end{figure}

\begin{figure}[th]
\centerline{\includegraphics[bb=0 0 623 216, width=12cm]{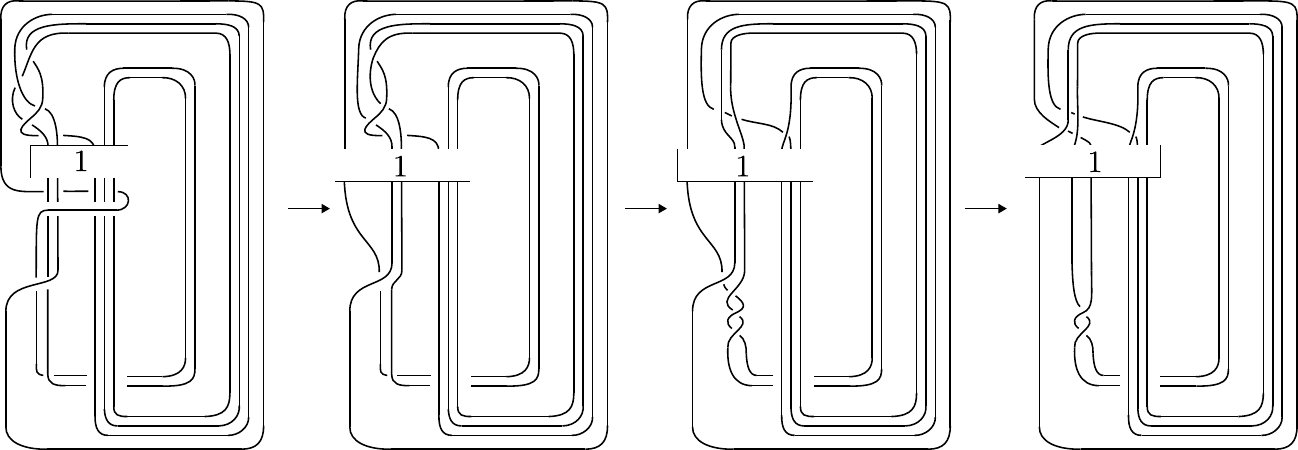}}
\vspace*{8pt}
\caption{Further deformation of the braid.
\label{fig:o9_38928-49braid}}
\end{figure}

We identify this knot (the closure of $\beta$) as $o9\_38928$.
From Table \ref{table:braid}, we have $o9\_38928$ as the closure of a braid
\[
\begin{split}
\gamma&=[1, 2, 1, 2, 3, 2, 4, 2, 3, 4, 4, 5, 4, 3, 5, 2, 3, 3, 2, 1, 3, 2, 3, 3, 2, 4, 2, 3, 2, 4, 3, 5, \\
&\qquad  4, 3, 5, 3, 2, 4, 1, 2, 2, 3, 3].
\end{split}
\]
Set $\alpha'=[-3,-3,-2,-1,5,3]$.
Then
\begin{equation}\label{eq:o9_38928}
\alpha'^{-1}\gamma \alpha'=
[(1,2,3,4,5)^6, 1,2,3,4,5,2,3,1,2,4,5,4,3].
\end{equation}
Reversing the order of words,  we can conclude that the closure of $\beta$ is $o9\_38928$.
\end{proof}

\begin{theorem}\label{lem:o9_38928dbc}
The diagram $L11n456(-\frac{3}{4},-2,-1,-2)$ represents the double branched cover of $K11n172$.
\end{theorem}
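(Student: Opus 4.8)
The plan is to follow the same Montesinos-trick strategy used successfully for $t12533$ and $t12681$. The previous lemma has established that the surgery diagram $L11n456(-\frac{3}{4},-2,-1,-2)$ represents $49$--surgery on $o9\_38928$, an L--space knot. The goal now is to realize this diagram as a double branched cover of a link in $S^3$, and to identify that branching link as $K11n172$. First I would apply a single $\pm 1$ twist on one of the surgery components---just as $+1$ on $C_2$ (resp.\ $-1$ on $C_1$ or $C_2$) was used in the earlier theorems---to reposition the diagram so that it becomes strongly invertible with respect to an explicit involution axis. This twist will alter the surgery coefficients on the remaining components, and I would record the new coefficients carefully, since they feed directly into the tangle-replacement computation.

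Once a strongly invertible position is achieved, I would take the quotient of the diagram around the axis of the involution. Each surgery component $C_i$ descends to a rational tangle replacement, where the replacement slope is computed as the surgery coefficient corrected by the writhe contribution of $C_i$ in the diagram---the same bookkeeping as in the proof of Theorem~\ref{lem:t12533-37}, where a slope $\frac{23}{5}$ was adjusted by the writhe $5$ to yield $\frac{13}{2}=[6,-2]$. I expect each quotient tangle to be a small rational tangle with an explicit continued-fraction expression. Assembling these tangles into the quotient diagram produces a candidate link diagram in $S^3$, whose double branched cover is precisely the manifold described by the surgery diagram.

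The final and most delicate step is to identify this candidate link diagram with (the mirror of) $K11n172$ through an explicit sequence of planar isotopies, Reidemeister moves, and flypes. This is the step I expect to be the main obstacle: the quotient diagram typically emerges in a tangled, unrecognizable form, and reducing it to a standard census representative requires a carefully drawn series of deformations, as was necessary in Fig.~\ref{fig:K12n407} and Fig.~\ref{fig:L11n172}. Because $K11n172$ already appears in Table~\ref{table:QA} as the branching set for the $49$--surgery, the identification is expected to succeed; but making the equivalence visible by hand---rather than merely confirming it by SnapPy---is precisely the labor that the Montesinos-trick approach demands. I would present this identification as an explicit figure, mirroring the format of the preceding proofs, and would attend to orientation/chirality so that the mirror (if any) is correctly tracked to match the branching set named in Table~\ref{table:QA}.
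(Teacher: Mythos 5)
Your plan is correct and matches the paper's proof essentially step for step: the paper performs exactly one $+1$ twist, on $C_2$, obtaining a strongly invertible diagram (the mirror of $L12n1896(-\frac{1}{4},1,1)$), takes the quotient under the involution with the writhe-corrected tangle replacement, and then identifies the resulting knot with the mirror of $K11n172$ by deforming both into closures of the same braid, just as you anticipate. The only content you leave unspecified---which component to twist and the explicit figure-based identification---is precisely what the paper supplies in Fig.~\ref{fig:L12n1896} and Fig.~\ref{fig:K11n172}, so there is no divergence in method.
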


\begin{proof}
Performing $+1$ twist on $C_2$ yields a strongly invertible diagram as shown in Fig.~\ref{fig:L12n1896}, which is the mirror of $L12n1896(-\frac{1}{4},1,1)$ in  SnapPy.
Take the quotient, then the tangle replacement as indicated in Fig.~\ref{fig:L12n1896} yields the mirror of the knot $K11n172$.
This can be seen by rewriting two knots as the closure of a braid
as shown in Fig.~\ref{fig:K11n172}.
\end{proof}

\begin{figure}[th]
\centerline{\includegraphics[bb=0 0 483 166, width=12cm]{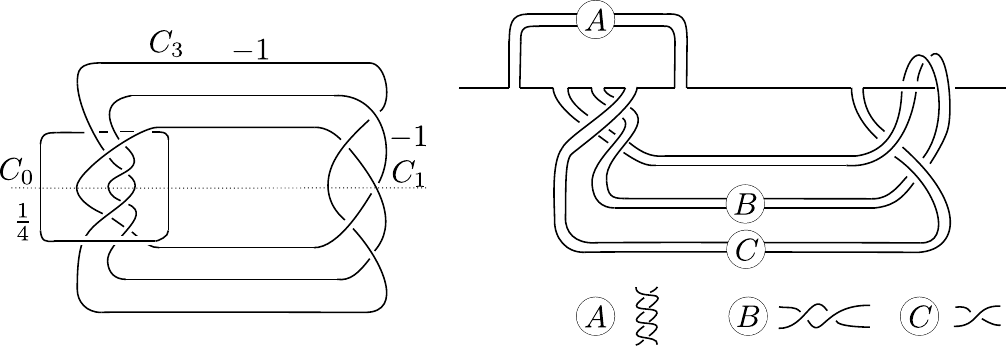}}
\vspace*{8pt}
\caption{Left: A strongly invertible diagram after $+1$ twist on $C_2$.
Right:  The tangle replacement.
\label{fig:L12n1896}}
\end{figure}


\begin{figure}[th]
\centerline{\includegraphics[bb=0 0 542 377, width=12.5cm]{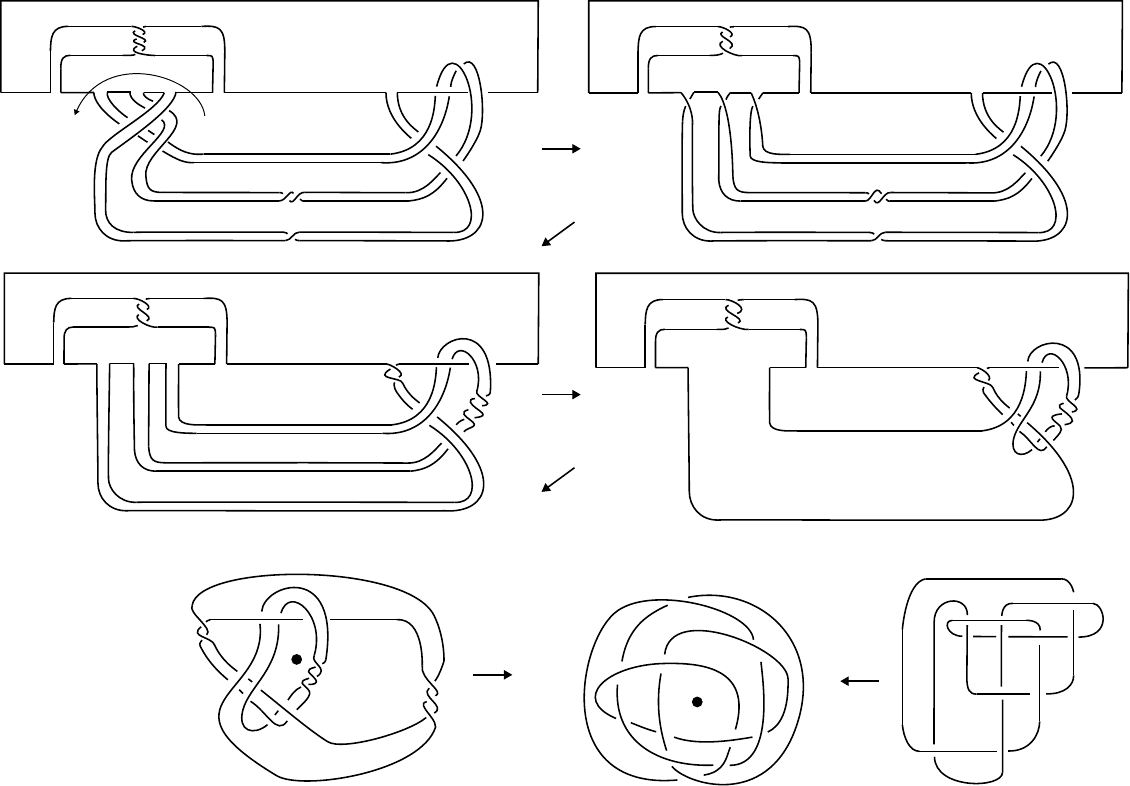}}
\vspace*{8pt}
\caption{The equivalence between the knot in Fig.\ref{fig:L12n1896} and the mirror of $K11n172$ (Bottom Right).
Both are expressed as the closure of the same braid.
(Black dots indicate the axes of closed braids.)
\label{fig:K11n172}}
\end{figure}


\subsection{$50$--surgery}

\begin{lemma}
$L13n80367(-\frac{1}{3}, -2,-4)$ represents $(-50)$--surgery on the mirror of $o9\_38928$.
\end{lemma}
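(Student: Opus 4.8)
The plan is to imitate the scheme of the $49$--surgery in Subsection~\ref{subsec:o9_38928-49}: convert the three-component surgery diagram into a single knot in closed-braid form by a sequence of Rolfsen twists, check that the final framing equals $-50$, and then match that knot with the mirror of $o9\_38928$ through a braid-word computation. Since the lemma asserts a \emph{mirror} statement (negative surgery on the mirror), I expect the knot that survives the twisting to be the mirror of $o9\_38928$, so the braid I extract for it should be essentially a negative braid, in contrast to the positive braid obtained in the $49$--surgery case.

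First I would clear the two auxiliary components $C_0$ and $C_1$. As $C_0$ carries the coefficient $-\frac{1}{3}=\frac{1}{-3}$, three twists along $C_0$ remove it (matching the convention by which the coefficient $\frac14$ was erased with $-4$ twists in Fig.~\ref{fig:o9_38928-49-2}) while modifying the framings of the strands of $C_1$ and $C_2$ that thread its disk. With only the three components present, I expect this to bring $C_1$ to framing $\pm1$, so that a single blow-down erases $C_1$ as well, leaving the knot $C_2$ alone. Keeping a running tally of how each twist alters the remaining framings, I would confirm by this bookkeeping that $C_2$ finally carries coefficient $-50$.

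With $C_2$ isolated, I would isotope it into the closure of a $6$--braid, in analogy with Figs.~\ref{fig:o9_38928-49-3} and~\ref{fig:o9_38928-49braid}, obtaining an explicit braid word $\beta$ whose central block is $(1,2,3,4,5)^{6}$. The decisive step is then purely algebraic. Starting from the braid word $\gamma$ of $o9\_38928$ in Table~\ref{table:braid}, I would reuse the conjugation and relabelling recorded in~(\ref{eq:o9_38928}) to put $\gamma$ into the standard form
\[
[(1,2,3,4,5)^{6},\,1,2,3,4,5,2,3,1,2,4,5,4,3],
\]
and then verify that the mirror of this word (all generators inverted), read in reverse order, is conjugate to $\beta$. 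Because $(1,2,3,4,5)^{6}$ is central, the operations of reversal, relabelling $\sigma_i\mapsto\sigma_{6-i}$, and mirroring act only on the short non-central tail, so the whole identification reduces to matching two length-$13$ tails under these symmetries.

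The hard part is this last step: locating the conjugating braid $\alpha$ that carries $\beta$ into the standard form above. As in the $49$--surgery case, where $\alpha=[-4,-5,3]$ sufficed, this is a finite but delicate search, and it is the only place where a genuine computation (rather than diagram chasing) is required; everything before it is routine twist bookkeeping modeled on Figs.~\ref{fig:o9_38928-49}--\ref{fig:o9_38928-49braid}.
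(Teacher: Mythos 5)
Your proposal follows the paper's proof essentially step for step: the paper erases $C_0$ with $3$ twists, after which $C_1$ and $C_2$ carry coefficients $1$ and $-1$, blows down $C_1$ with a $-1$ twist to leave $C_2$ with coefficient $-50$, reduces a $7$--strand closed braid form to the $6$--braid $\beta=[(1,2,3,4,5)^6,1,2,3,4,5,5,4,3,3,4,2,3,5]$ whose \emph{mirror} closure is the knot (matching your negative-braid expectation), and then conjugates by $\alpha=[-5,-3,-2,-4,-3]$ to land exactly on (\ref{eq:o9_38928}), with no reversal or relabelling needed in this case. The only pieces you leave open are the explicit braid word and conjugator, which is precisely the finite computation you flag, so the approach is correct and coincides with the paper's.
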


\begin{proof}
Perform $3$ twists on $C_0$ to erase it.  See Fig.~\ref{fig:L13n8037}.
Then $C_1$ and $C_2$ have coefficients $1$ and $-1$, respectively.
Performing $-1$ twist on $C_1$ changes $C_2$ into a knot with coefficient $-50$.
It has a closed braid form of $7$ strands, but we can deform it into that of a $6$--braid
as shown in Fig.~\ref{fig:o9_38928-50}.
Let
\[
\beta=[(1,2,3,4,5)^6,1,2,3,4,5,5,4,3,3,4,2,3,5].
\]
Then our knot is the mirror of the closure of $\beta$.

We can verify that $\beta$ and $o9\_38928$ share the same closure.
By taking a conjugation with $\alpha=[-5,-3,-2,-4,-3]$,
\[
\alpha^{-1}\beta \alpha
=[(1,2,3,4,5)^6,	1,2,3,4,5,2,3,1,2,4,5,4,3],
\]
which is equal to (\ref{eq:o9_38928}).
\end{proof}

\begin{figure}[th]
\centerline{\includegraphics[bb=0 0 381 160, width=10cm]{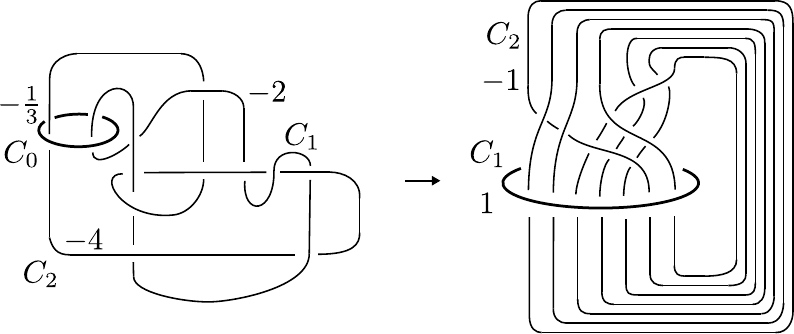}}
\vspace*{8pt}
\caption{
 Do $3$ twists on $C_0$.
 \label{fig:L13n8037}}
\end{figure}

\begin{figure}[th]
\centerline{\includegraphics[bb=0 0 636 174, width=12cm]{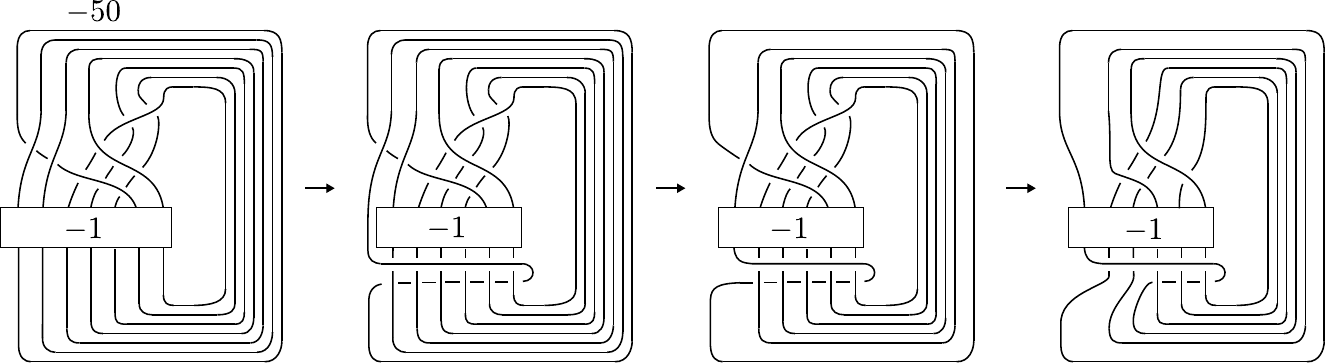}}
\vspace*{8pt}
\caption{After $-1$ twist on $C_1$, we deform the knot into a closed braid form of $6$ strands.
 \label{fig:o9_38928-50}}
\end{figure}


\begin{theorem}
The diagram $L13n8037(-\frac{1}{3},-2,-4)$ represents the double branched cover of $L11n178$.
\end{theorem}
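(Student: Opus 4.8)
The plan is to reproduce, for this last case, the argument already used for the two link--branching--set theorems in this paper, Theorem~\ref{lem:t12533-38} and Theorem~\ref{thm:t12681-62}, together with the argument for the companion $49$--surgery of the very same knot, Theorem~\ref{lem:o9_38928dbc}. Since $L11n178$ is a link rather than a knot, the quotient under the involution will again be a link, and the final identification will have to be carried out componentwise rather than by recognizing a single knot type.

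First I would bring the diagram $L13n8037(-\frac{1}{3},-2,-4)$ into a strongly invertible position. As in every previous double branched cover theorem, I expect this to be achieved by a single $\pm1$ twist along one component, after which the surgery diagram should be recognizable as the mirror of a low--crossing SnapPy link carrying explicit rational coefficients, exactly as the earlier proofs rewrite their diagrams as mirrors of $L10n92$, $L12n1896$ or $L14n24287$ with prescribed fillings. Choosing the correct component to twist, and verifying that the twist genuinely exhibits the involution, is the step demanding the most care here.

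Next I would apply the Montesinos trick by taking the quotient around the axis of the involution. Each filled component then determines a rational tangle replacement, the tangle being read off from its surgery coefficient corrected by the writhe of that component in the diagram --- compare the tangles $-\frac{5}{3}$ and $-8$ obtained from the fillings and writhes in Theorem~\ref{lem:t12533-38}. If, after the initial twist, some component carries an integral $\pm1$ coefficient, it would be blown down before passing to the quotient, just as the component of coefficient $1$ is handled in that theorem. Performing the remaining tangle replacements yields an explicit link in $S^3$, which by the preceding lemma already realizes $(-50)$--surgery on the mirror of $o9\_38928$ as a double branched cover.

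Finally I would identify this quotient link with the mirror of $L11n178$ (a two--component link, as for the other $L11n$ examples treated above). Following the method used for $L12n789$ and for $L11n172$, I would first record the isotopy type of each component separately and then track how the two components are intertwined, simplifying the diagram by a sequence of isotopies until it matches a standard diagram of the mirror of $L11n178$. The main obstacle will be the combination of the first and last steps: producing a genuinely strongly invertible diagram for an asymmetric knot, which the introduction already flags as the hardest ingredient of the whole study, and then correctly matching the linking pattern of the two quotient components, since for links --- unlike knots --- the individual component types do not by themselves pin down the link, and any error in the writhe corrections would produce the wrong tangles and hence the wrong branching set.
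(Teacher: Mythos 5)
Your general machinery (Montesinos trick after reaching a strongly invertible position, writhe-corrected tangle replacements, componentwise identification of the quotient link) is the right toolbox, but your concrete plan fails at its first step, and that failure is precisely what makes this case special among the eighteen. In $L13n8037(-\frac{1}{3},-2,-4)$ no component carries coefficient $\pm1$, so there is no single $\pm1$ twist that blows anything down; the only available simplification is the $3$--fold twist erasing $C_0$, after which $C_1\cup C_2$ carries coefficients $(1,-1)$. This two--component link cannot be strongly invertible by a component-preserving involution: blowing down the $1$--framed unknot $C_1$ turns $C_2$ into the mirror of $o9\_38928$, and a strong inversion of the link would descend through that Rolfsen twist to a strong inversion of the knot, contradicting its asymmetry. (The paper meets the same obstruction after partially blowing down $L13n8037(-\frac{1}{2},-1,-3)$ in the $37$--surgery on $o9\_40487$ and says so explicitly.) So your plan of quotienting a multi-component strongly invertible diagram with several rational tangle replacements, modeled on Theorems~\ref{lem:t12533-38} and \ref{thm:t12681-62}, has no diagram to act on here, and your hedge about blowing down a stray $\pm1$--framed component does not rescue it: blowing down $C_1$ lands on the asymmetric knot, a dead end for the trick.

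The idea you are missing is the dual blow-down. Instead of keeping $C_2$ (the component destined to become the asymmetric knot), blow $C_2$ down with a $+1$ twist and keep $C_1$, which becomes a knot with integral coefficient $50$; this knot is the \emph{strongly invertible} census L--space knot $v1980$, so the same manifold is re-expressed as integral surgery on a symmetric knot. The paper then writes $v1980$ as the closure of an explicit positive $5$--braid of writhe $38$, places that closed braid in a strongly invertible position (Fig.~\ref{fig:v1980}), and performs the Montesinos trick with a single tangle replacement of $50-38=12$ right half twists --- there are no leftover rational tangles coming from the original components at all. Only the final step of your proposal matches the paper's argument: the quotient is a two-component link with both components unknotted, identified with the mirror of $L11n178$ by tracking how one component wraps around the other (Fig.~\ref{fig:L11n178}). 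Without the passage to the companion knot $v1980$, no amount of care with writhe corrections produces a strongly invertible diagram for this surgery.
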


\begin{proof}
Perform $3$ twists on $C_0$.  The resulting link consists of  two unknotted components $C_1$ and $C_2$, where $C_1$ wraps around $C_2$ in the same direction.  See Fig.~\ref{fig:L13n8037}.
This means that $C_1$ is braided in the complement of $C_2$.
Note that $C_1$ and $C_2$ have coefficients $1$ and $-1$, respectively.
Performing $+1$ twist on $C_2$ changes $C_1$ with coefficient $50$.
Furthermore,  we can see that this $C_1$ is the closure of a positive  $5$--braid
\[
\begin{split}
&2,2,2,4,(3,2,4,3),(1,2,3,4),(4,3,2,1),3,3,2,4,\\
& 1,3,4,2,3,3,(1,2,3,4),(4,3,2,1),(3,2,4,3).
\end{split}
\]
(In fact, $C_1$ is a strongly invertible L--space knot $v1980$.)
From this braid word, we make
a strongly invertible diagram as shown in Fig.~\ref{fig:v1980}. Then we can carry out the Montesinos trick.

\begin{figure}[th]
\centerline{\includegraphics[bb=0 0 646 222, width=12cm]{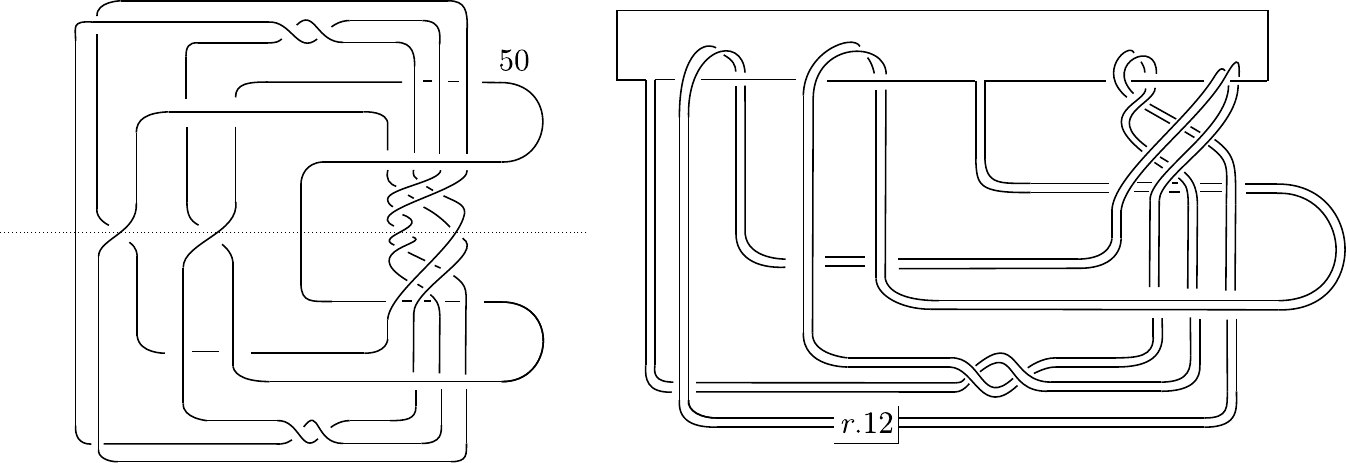}}
\vspace*{8pt}
\caption{Left: A strongly invertible diagram obtained from the braid word.
Right:  The link after the tangle replacement.
The box labeled with $r.12$ contains 12 right handed horizontal half twists.
\label{fig:v1980}}
\end{figure}

Take the quotient of the strongly invertible diagram.
After the tangle replacement, we have the link as shown in Fig.~\ref{fig:v1980}.


A series of deformations in Fig.~\ref{fig:L11n178} shows that this link and the mirror of $L11n178$ are
equivalent by observing how one component wraps around the other.
(Both components are unknotted.)
\end{proof}

\begin{figure}[th]
\centerline{\includegraphics[bb=0 0 738 782, width=12cm]{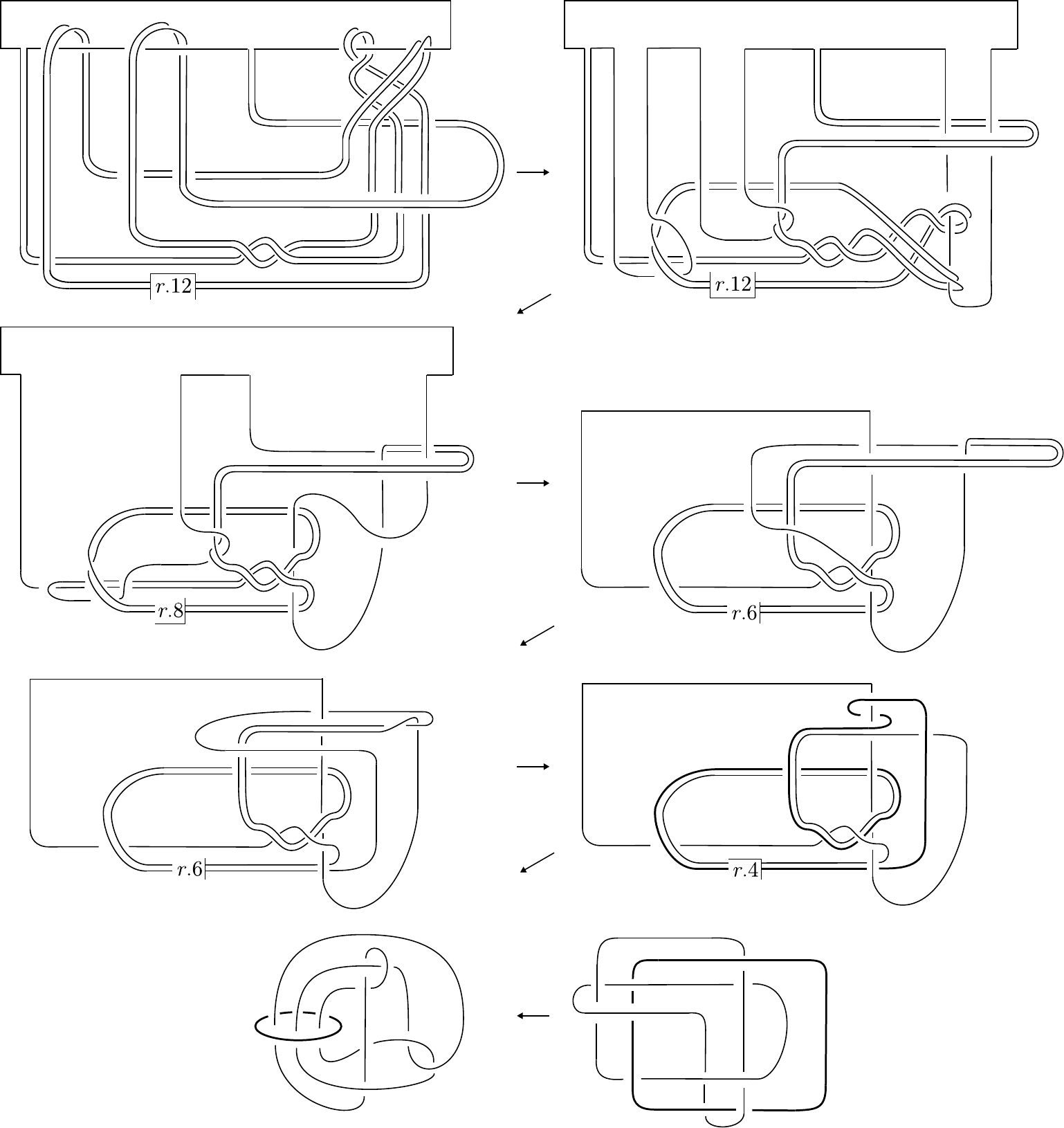}}
\vspace*{8pt}
\caption{The link after the tangle replacement is equivalent to the mirror of $L11n178$ (Bottom Right).
\label{fig:L11n178}}
\end{figure}

\section{$o9\_39162$}

\subsection{$64$--surgery}


\begin{lemma}
$L12n1968(\frac{1}{4},5,1)$ represents $(-64)$--surgery on the mirror of $o9\_39162$,
\end{lemma}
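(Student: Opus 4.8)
The plan is to follow the same template established in every preceding lemma of the paper: start from the given surgery diagram $L12n1968(\frac{1}{4},5,1)$, perform a sequence of integral twists along the components $C_0$ and $C_1$ to erase them (or absorb them into the coefficient on $C_2$), and thereby reduce $C_2$ to an explicit closed braid. First I would twist along the component carrying the fractional coefficient $\frac{1}{4}$; a twist of $-4$ along $C_0$ should clear the denominator and turn its coefficient into a small integer, while simultaneously altering the coefficient on $C_2$. Then, since the coefficient on $C_1$ is already the integer $5$, an appropriate number of twists there (together with the $+1$ on $C_2$) should erase $C_1$ and leave $C_2$ as a knot whose surgery coefficient is $-64$. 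Tracking the arithmetic of how each twist modifies the remaining framings via the standard continued-fraction/linking-number bookkeeping is routine but must be done carefully.

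Once $C_2$ is isolated, the next step is to present it as the closure of a braid, just as in Lemma \ref{lem:t12533-37knot} and the $61$--surgery lemma. I would read off a braid word from the twisted diagram — presumably a $6$--braid given the braid word for $o9\_39162$ in Table \ref{table:braid} uses generators up to $\sigma_5$ — and expect it to take the shape $[(1,2,3,4,5)^{k},\dots]$ with a central full-twist block $(1,2,3,4,5)^{k}$ corresponding to some number of full twists, followed by a short explicit tail. The claim that the result is the \emph{mirror} of $o9\_39162$ then reduces to a purely combinatorial braid identity.

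The final and genuinely delicate step is the braid-word verification. Following the pattern of the earlier proofs, I would take the braid word $\gamma$ for $o9\_39162$ from Table \ref{table:braid}, conjugate it by an explicitly chosen word $\alpha$ so that $\alpha^{-1}\gamma\alpha$ acquires the central full-twist block at the front, and then apply the symmetry $\sigma_i \mapsto \sigma_{6-i}$ (mirroring/flipping the braid index) together with reversal of the word order to match my braid $\beta$. Concretely, the equality of closures is established by showing $\alpha^{-1}\gamma\alpha$ and the reverse-and-relabel of $\beta$ agree after using that the full-twist block is central in the braid group and hence may be moved freely.

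The hard part will be pinning down the correct conjugating word $\alpha$ and confirming the relabeling symmetry, since this requires the knot $C_2$ to actually \emph{be} (the mirror of) $o9\_39162$ rather than merely share its surgery coefficient — the arithmetic guaranteeing the coefficient is $-64$ is easy, but matching the braid words is where an error would hide. I would verify the conjugation by direct (machine-checkable) cancellation in the braid group, exactly as the author does elsewhere, and conclude that $L12n1968(\frac{1}{4},5,1)$ represents $(-64)$--surgery on the mirror of $o9\_39162$.
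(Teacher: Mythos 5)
Your proposal follows essentially the same route as the paper: the author does exactly $-4$ twists on $C_0$ (clearing the $\frac{1}{4}$) and then a $-1$ twist on $C_1$ (whose coefficient has dropped from $5$ to $1$), leaving $C_2$ with coefficient $-64$ as the closure of a $6$--braid $\beta$ containing the central block $(1,2,3,4,5)^{12}$ after conjugation, and then matches it to $\gamma$ from Table \ref{table:braid} by explicit conjugating words $\alpha$, $\alpha'$ together with the relabeling $\sigma_i\mapsto\sigma_{6-i}$, just as you outline (word reversal, which you also anticipate, is in fact only needed in the companion $65$--surgery lemma). Your sketch correctly identifies where the real content lies — the explicit braid word for $C_2$ and the conjugators, which you defer to direct machine-checkable cancellation — and this is precisely what the paper's proof supplies.
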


\begin{proof}
Do $-4$ twists on $C_0$, and then $-1$ twist on $C_1$.
See  Fig.~\ref{fig:o9_39162-64} and \ref{fig:o9_39162-64-2}.
The remaining component $C_2$ has coefficient $-64$ and a closed braid form of $6$ strands.
As shown in Fig.~\ref{fig:o9_39162-64-2}, 
it is the mirror of the closure of a $6$--braid
\[
\beta=[(1,2,3,4,5)^6,1,2,2,4,4,4,4,3,4,2,3,4,5,5,4,3,2,1,1,2,3,4,5].
\]
Set 
$\alpha=[4,4,1,2,2,3,2,4,3,1,2]$.
Then
\[
\alpha^{-1}\beta \alpha=[(1,2,3,4,5)^{12},-2,-1,-1,4,5,-3,-2,-1,-4,-3,-2].
\]
By renaming the generators $\sigma_i$ by $\sigma_{6-i}$, this gives
\begin{equation}\label{eq:o9_39162-64}
[(5,4,3,2,1)^{12},-4,-5,-5,2,1,-3,-4,-5,-2,-3,-4].
\end{equation}

\begin{figure}[th]
\centerline{\includegraphics[bb=0 0 390 261, width=10cm]{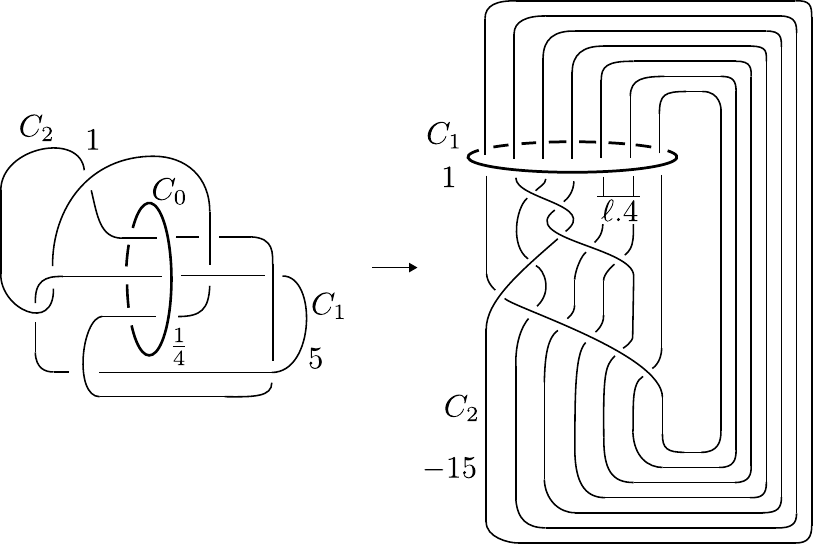}}
\vspace*{8pt}
\caption{Do $-4$ twists on $C_0$.
\label{fig:o9_39162-64}}
\end{figure}

\begin{figure}[th]
\centerline{\includegraphics[bb=0 0 607 261, width=12cm]{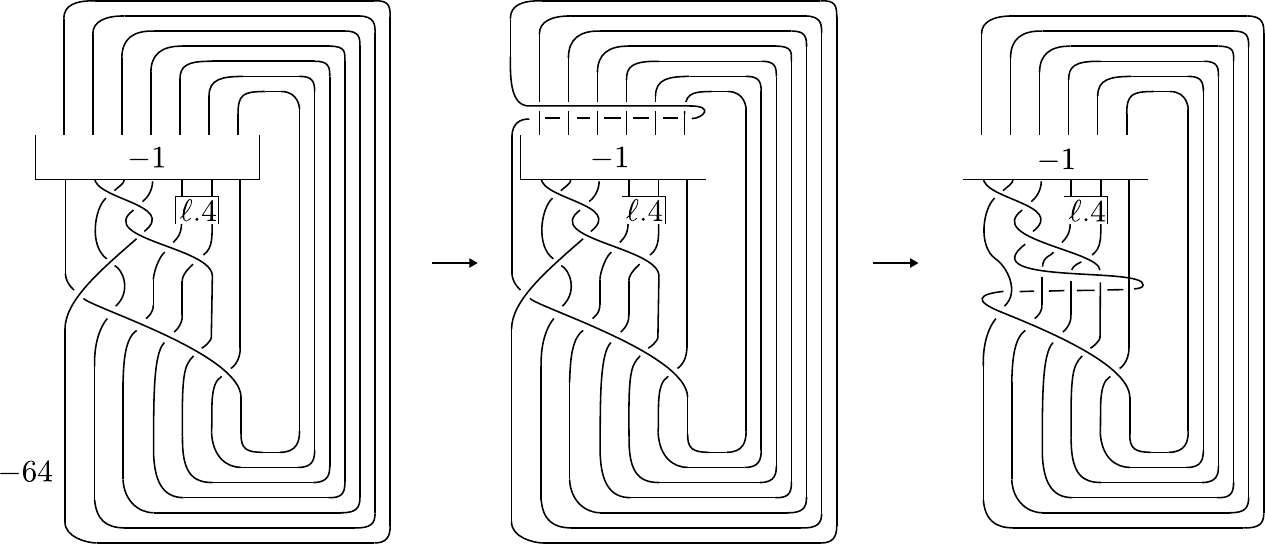}}
\vspace*{8pt}
\caption{After $-1$ twist on $C_1$, $C_2$ will be a closed braid form with $6$ strands.
\label{fig:o9_39162-64-2}}
\end{figure}

We will verify that its closure is  $o9\_39162$.
In Table \ref{table:braid}, $o9\_39162$ is
\[
\begin{split}
\gamma&=[1, 1, 2, 1, 3, 2, 4, 2, 5, 1, 3, 2, 2, 3, 2, 4, 2, 5, 2, 4, 3, 2, 4, 2, 5, 4, \\
&\qquad 3, 5, 2, 4, 2, 5, 2, 4, 3, 2, 2, 3, 3, 2, 2, 1, 2, 3, 4, 3, 2, 4, 5, 4, 3, 4, 3].
\end{split}
\]
Set 
\[
\begin{split}
\alpha'&=
[-4,-3,-2,-1,-1,2,3,4,1,2,3,4,2,3,2,3,-5,-4,\\
&\quad 3,2,1,-5,-4,-5,-3,2,-4,-3].
\end{split}
\]

Then we can see
\[
\alpha'^{-1}\gamma \alpha'=[(1,2,3,4,5)^{12},-4,-5,-5,2,1,-3,-4,-5,-2,-3,-4],
\]
which is equal to (\ref{eq:o9_39162-64}), because $(1,2,3,4,5)^{12}\ (=(5,4,3,2,1)^{12})$ is $2$ full twists.
\end{proof}

\begin{theorem}
The diagram $L12n1968(\frac{1}{4},5,1)$ represents the double branched cover of $L12n1050$.
\end{theorem}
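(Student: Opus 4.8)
The plan is to follow the same Montesinos-trick strategy used throughout the paper for the companion statements. I would begin from the surgery diagram $L12n1968(\frac{1}{4},5,1)$ and, as in the proofs of Theorems~\ref{lem:t12533-37}, \ref{lem:t12681dbc}, and \ref{lem:o9_38928dbc}, perform a single integral twist on the component carrying the integer coefficient---here $C_2$---so as to bring the whole diagram into a \emph{strongly invertible} position. The preceding lemma already established that $L12n1968(\frac{1}{4},5,1)$ is $(-64)$--surgery on the mirror of $o9\_39162$, so the resulting manifold is the double branched cover of whatever branching set the Montesinos trick produces. Experience from the earlier cases suggests that after the twist the remaining two surgery components $(C_0,C_1)$ will acquire new rational coefficients, and the diagram should be recognizable as (the mirror of) a known SnapPy link with two components surgered.

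Next I would take the quotient of this strongly invertible diagram under its involution. The key computation is to track the tangle replacements: for each surgery component $C_i$, the rational tangle inserted at its image is obtained from the surgery coefficient by subtracting the writhe of $C_i$ in the working diagram (exactly the correction $\frac{23}{5}-5$ and $\frac{19}{2}-3$ performed in Theorem~\ref{lem:t12533-37}, and $-\frac{11}{3}+3$ in Theorem~\ref{lem:t12681dbc}). Carrying out the framing-to-tangle arithmetic for both components yields two explicit rational tangles, which I would then insert into the quotient tangle diagram to obtain an explicit knot diagram.

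Finally, I would identify the resulting knot as the mirror of $L12n1050$. Since $L12n1050$ is a two-component link, I expect the quotient to again be a two-component link---consistent with the $L$-type name---and the identification should proceed exactly as in the proofs of Theorems~\ref{lem:t12533-38} and \ref{thm:t12681-62}: deform the diagram through a sequence of isotopies, emphasizing one component with a thicker line, and verify the equivalence by examining how the components wrap around one another. If the branching set is a link of unknotted components, the comparison reduces to how one component sits in the complement of the other, as in Fig.~\ref{fig:L11n172} and Fig.~\ref{fig:L11n178}.

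The main obstacle, just as the author flags in the introduction, is producing the correct strongly invertible position after the initial twist: asymmetric $o9\_39162$ does not itself admit the involution, so the symmetry only appears once both of the auxiliary components are surgered, and finding the twist that exposes this hidden involution (and confirming the quotient diagram) is the delicate step. The subsequent tangle arithmetic is routine, and the final link identification is a matter of explicit isotopy, best presented through a figure analogous to the others in the paper.
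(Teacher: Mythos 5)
Your proposal matches the paper's proof: the author performs exactly the single $-1$ twist on $C_2$ that you describe, obtaining the strongly invertible pretzel link $P(2,4,-3)=L9n14$ with coefficients $(4,-\frac{15}{4})$ (the same diagram already used in Theorem~\ref{lem:t12681dbc}), takes the quotient with the writhe-corrected tangle replacements $-\frac{15}{4}+3=-\frac{3}{4}=[0,1,-3]$ and $4$, and identifies the two-component branching set by examining how one component wraps around the other, just as you outline. The only slip is cosmetic: the paper identifies the quotient link as $L12n1050$ itself rather than its mirror, but otherwise your strategy and framing arithmetic coincide with the paper's argument.
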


\begin{proof}
As in the proof of Theorem \ref{lem:t12681dbc}, performing $-1$ twist on $C_2$
yields a pretzel link $P(2,4,-3)$.
(This corresponds to $L9n14(4,-\frac{15}{4})$ in SnapPy.)

For the strongly invertible diagram in Fig.~\ref{fig:L9n14} with different coefficients $(-15/4,4)$,
take the quotient.  
It is the same as Fig.~\ref{fig:L9n14} with $A$ and $B$ replaced by $[0,1,-3]$ and $4$, respectively.



We need to identify that this link is $L12n1050$.
As shown in Fig.~\ref{fig:L12n1050}, we examine how one component wraps around
the other.
This confirms that both links are the same. 
\end{proof}

\begin{figure}[th]
\centerline{\includegraphics[bb=0 0 487 103, width=12cm]{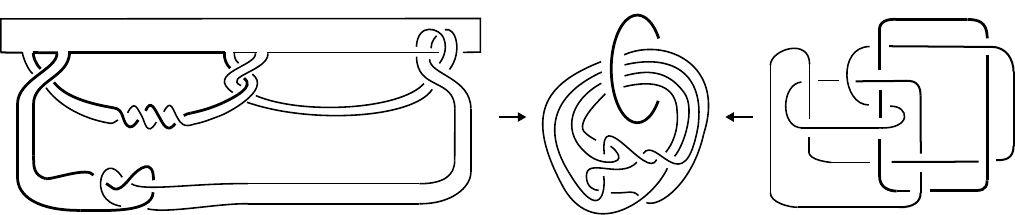}}
\vspace*{8pt}
\caption{Right is $L12n1050$.
\label{fig:L12n1050}}
\end{figure}

\subsection{$65$--surgery}

\begin{lemma}
$L13n9366(1,\frac{3}{4},5)$ represents
$65$--surgery on $o9\_39162$.
\end{lemma}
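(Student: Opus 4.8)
The plan is to mirror the template of the two preceding $o9\_39162$ lemmas: first collapse the surgery description $L13n9366(1,\frac{3}{4},5)$ to a single framed knot by blowing down $C_1$ and $C_2$, and then identify that knot with $o9\_39162$ by an explicit braid calculation. Concretely, I would perform Rolfsen twists along $C_1$ and $C_2$ to erase them, tracking the induced change on the framing of the knot component $C_0$ by the rule that $t$ full twists add $t\ell^2$ to the coefficient of a component that passes $\ell$ times through the twisting region. The coefficient $5$ on $C_2$ and, in particular, the non-unit coefficient $\frac{3}{4}$ on $C_1$ cannot be blown down in a single step; I would choose the order of twists (and, for $\frac{3}{4}$, a short continued-fraction sequence) so that each auxiliary coefficient is brought to the form $\frac{\pm1}{n}$ before it is removed, exactly as the coefficient $5$ drops to $1$ after a neighbouring blow-down in the $64$--surgery case. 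The bookkeeping should leave $C_0$ with framing $65$.

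Once $C_1$ and $C_2$ are gone, $C_0$ is the desired knot, and I would isotope its diagram into the closure of a positive braid $\beta$ on $6$ strands (the strand count inherited from $o9\_39162$), recording the isotopy in a figure as was done in Fig.~\ref{fig:o9_39162-64-2}. Since the slope is positive and $o9\_39162$ is a positive-braid closure, I expect $\beta$ to be positive and the identification to be with $o9\_39162$ itself rather than its mirror, in contrast with the $64$--surgery case.

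The identification of the closure of $\beta$ with $o9\_39162$ is the crux. Writing $\gamma$ for the braid word of $o9\_39162$ in Table~\ref{table:braid}, I would search for a conjugating braid $\alpha$ so that $\alpha^{-1}\beta\alpha$ coincides with $\gamma$ after factoring out a central power of $(\sigma_1\sigma_2\sigma_3\sigma_4\sigma_5)$, renaming the generators by $\sigma_i\mapsto\sigma_{6-i}$, and, if needed, reversing the word --- each of these operations preserving the closure, just as in the derivation of (\ref{eq:o9_39162-64}). Centrality of the full-twist factor lets it be set aside, so the matching reduces to conjugating the short complementary words.

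The main obstacle is precisely this last step: locating the correct conjugator $\alpha$ and certifying the resulting word identity is a finite but delicate combinatorial search, with no a priori guarantee of a short $\alpha$. A secondary difficulty is justifying that the passage from the surgered diagram of $C_0$ to closed-braid form is a genuine isotopy; as elsewhere in the paper I would substantiate this with an explicit sequence of figures rather than a formula.
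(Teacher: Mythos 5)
Your proposal is correct and takes essentially the same route as the paper: twists erasing $C_1$ and $C_2$ in an interleaved order (so each auxiliary coefficient is brought to $\pm1$ before blow-down) leave $C_0$ with framing $65$, the knot is reduced to the closure of a positive $6$--braid $\beta$, and a conjugation (the paper uses $\alpha=[4,1,3,2,1,-5,-4,2,1,2]$) followed by reversal of the word completes the identification, confirming your expectation that one gets $o9\_39162$ itself rather than its mirror. The only cosmetic difference is that the paper matches $\alpha^{-1}\beta\alpha$ against the word (\ref{eq:o9_39162-64}) already certified in the $64$--surgery lemma instead of re-deriving the match directly from the Table~\ref{table:braid} word $\gamma$, which shortens the conjugator search you flagged as the main obstacle.
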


\begin{proof}
Perform $-1$ twist on $C_1$.
Then $C_1$ and $C_2$ have coefficients $3$ and $1$, respectively (Fig~\ref{fig:o9_39162-65}).
Do $-1$ twist on $C_2$ to erase it.
Finally, do $+1$ twist on $C_1$.  Then $C_0$ gives a knot with coefficient $65$, which is
of a closed braid form with $9$ strands.

\begin{figure}[th]
\centerline{\includegraphics[bb=0 0 532 154, width=12cm]{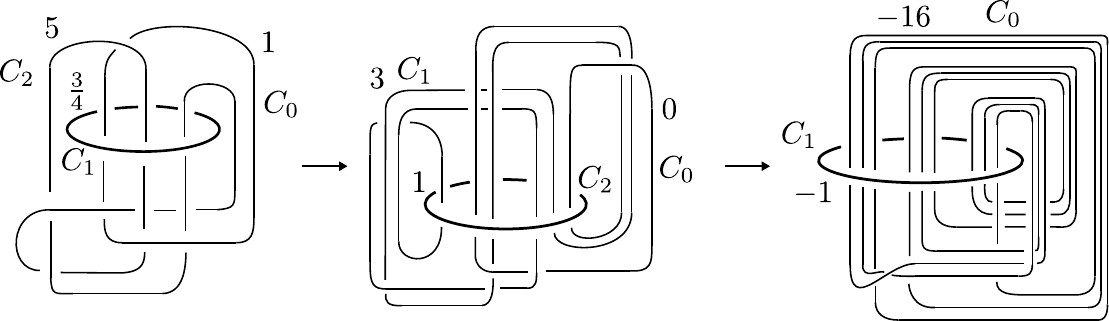}}
\vspace*{8pt}
\caption{Do $-1$ twist on $C_0$, and then $-1$ twist on $C_2$.
\label{fig:o9_39162-65}}
\end{figure}

First, we reduce $C_0$ into a closure of a $6$--braid $\beta$ as shown in Fig.~\ref{fig:o9_39162-65-2}, where
\[
\beta=[(1,2,3,4,5)^6,4,3,2,1,(1,2,3,4,5)^2,5,1,2,3,4,5,5,4,3].
\]

\begin{figure}[th]
\centerline{\includegraphics[bb=0 0 464 464, width=12cm]{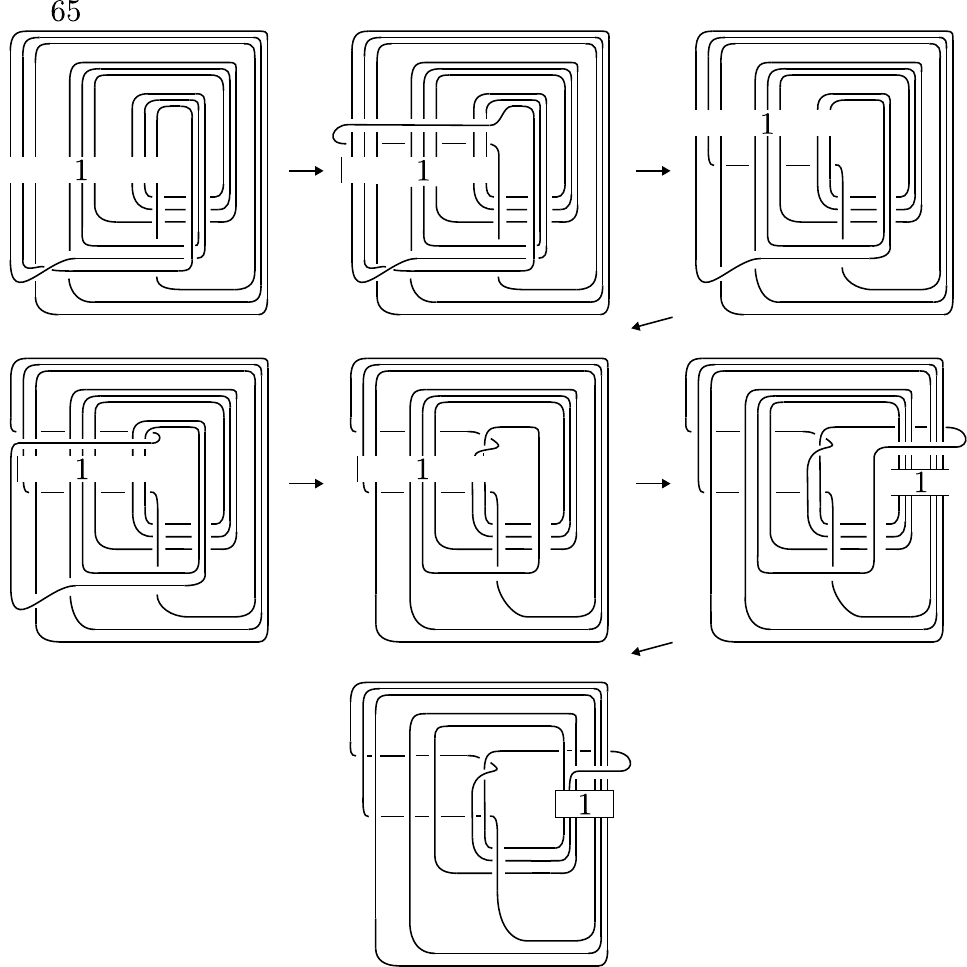}}
\vspace*{8pt}
\caption{Deform $C_0$ into the closure of a $6$--braid.
\label{fig:o9_39162-65-2}}
\end{figure}

Set $\alpha=[4,1,3,2,1,-5,-4,2,1,2]$.
Then we have
\[
\alpha^{-1}\beta \alpha=[(1,2,3,4,5)^{12},-4,-3,-2,-5,-4,-3,1,2,-5,-5,-4].
\]
By reversing the order of words, this is conjugate to (\ref{eq:o9_39162-64}).
\end{proof}


\begin{theorem}
The diagram $L13n9366(1,\frac{3}{4},5)$ represents the double branched cover of $K12n278$.
\end{theorem}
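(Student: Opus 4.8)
The plan is to follow exactly the same strategy that proved Theorems~\ref{lem:t12681dbc} and \ref{lem:o9_38928dbc}: convert the given surgery diagram into a strongly invertible position, apply the Montesinos trick to pass to a tangle sum in the quotient, and then identify the resulting link with the claimed branching set. Concretely, I would start from the diagram $L13n9366(1,\frac{3}{4},5)$ and first perform the $-1$ twist on $C_1$ that already appears in the surgery lemma above, since that twist is what exhibits the link in a convenient form. The immediate goal is to arrange the two surgery components $C_1$ and $C_2$ symmetrically with respect to a strong inversion of $C_0$'s complement, producing a strongly invertible diagram analogous to Fig.~\ref{fig:L9n14} or Fig.~\ref{fig:L12n1896}.

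Once the strongly invertible diagram is in hand, I would take the quotient around the axis of the involution. The two surgery slopes $\frac{3}{4}$ and $5$ become rational tangle replacements in the quotient; following the bookkeeping used in the earlier proofs, each replacement slope is corrected by the writhe of the corresponding component in the diagram (as in the $\frac{23}{5}-5$ and $-\frac{11}{3}+3$ computations). I would record each corrected slope, expand it as a continued fraction to write down the explicit rational tangle (e.g.\ of the form $[a,b,c]$), and substitute the two tangles into the quotient of $C_0$. This yields a concrete diagram of a knot, which should be the mirror of $K12n278$.

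The remaining and genuinely delicate step is to identify this quotient knot as the mirror of $K12n278$. Here I would follow the approach of Fig.~\ref{fig:K11n172}: rewrite the quotient knot as the closure of a braid and rewrite $K12n278$ (or its mirror) as the closure of a braid as well, then exhibit a conjugation and a generator-renaming $\sigma_i\mapsto\sigma_{n-i}$ that match the two braid words, possibly after reversing the order of the word to pass between a link and its mirror. Since $K12n278$ is a knot rather than a link, this braid-matching technique should apply cleanly, so I do not expect the final combinatorial identification to require the more ad hoc ``how one component wraps around the other'' arguments used for the link cases.

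The hard part will be producing the strongly invertible diagram: as the introductory remark stresses, the underlying knot $o9\_39162$ is asymmetric, so no strong inversion survives before all the surgery curves are filled, and the symmetric position only emerges after the correct sequence of twists. Getting a clean pretzel-type or small-crossing strongly invertible diagram (ideally recognizing it as a named SnapPy link with explicit surgery coefficients, as was done with $L9n14$ and $L12n1896$) is what makes the tangle computation tractable. I would expect the $65$--surgery here to reduce, after the preliminary twists, to a strongly invertible diagram closely parallel to the $64$--surgery case, so that the quotient is again a variant of the $P(2,4,-3)$ pretzel picture with the two tangles $A$ and $B$ replaced by the continued-fraction expansions of the corrected slopes $\frac{3}{4}$ and $5$.
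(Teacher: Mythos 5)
Your overall frame (Montesinos trick, writhe-corrected rational tangle replacements, then identify the branching set) is the right one, but the key step of your plan is not just different from the paper's---it cannot be carried out. You propose to keep $C_0$ unfilled and ``arrange the two surgery components $C_1$ and $C_2$ symmetrically with respect to a strong inversion of $C_0$'s complement,'' reading the branching set off ``the quotient of $C_0$.'' By Table~\ref{table:surgery}, $L13n9366(*,\frac{3}{4},5)$ is a surgery description of $o9\_39162$: filling $C_1$ and $C_2$ turns $C_0$ into that knot. An involution of the sort you ask for would therefore descend, after equivariant surgery on $C_1$ and $C_2$, to a strong inversion of $o9\_39162$ itself, contradicting the asymmetry of this census knot; this is exactly the obstruction stressed in the Remark in Section~\ref{sec:t12533}. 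The paper's proof avoids it by the opposite move: since $C_0$ carries coefficient $1$ in the diagram $L13n9366(1,\frac{3}{4},5)$, one performs a $-1$ twist on $C_0$ to \emph{blow it down}, eliminating the asymmetric component entirely. What remains is the framed two-component link $C_1\cup C_2$, which is the $2$--bridge link $L8a12$ and hence strongly invertible; the branching set is the image of the \emph{axis} of that inversion (not of $C_0$, which no longer exists) with two rational tangle replacements. Your proposed first move, a $-1$ twist on $C_1$, merely changes the slopes ($\frac{3}{4}\mapsto 3$, $5\mapsto 1$, as in the surgery lemma) while leaving the asymmetric component in place, so no symmetric position can emerge from it.

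Two smaller points. First, your guess that the quotient picture is ``again a variant of the $P(2,4,-3)$ pretzel'' is off: for this surgery the symmetric link is $L8a12$, not $L9n14=P(2,4,-3)$ as in the $64$--surgery case. Second, your identification step is heavier than needed: because the residual link is $2$--bridge, the two tangle replacements immediately produce a Montesinos knot, which the paper reads off as $(\frac{2}{3},\frac{2}{5},-\frac{1}{5})$ and recognizes as the mirror of $K12n278$ (Fig.~\ref{fig:L8a12}), just as the Montesinos knot $(\frac{3}{5},\frac{2}{3},-\frac{1}{4})$ was recognized as the mirror of $K11n89$ in Theorem~\ref{lem:t12681dbc}; braiding both sides and hunting for a conjugation would be workable but unnecessary. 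The writhe-correction bookkeeping you describe for converting the slopes into tangles is correct and matches the paper, but as written the proposal is missing the one essential idea---blow down the $+1$--framed component first---without which the strong inversion you intend to use does not exist.
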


\begin{proof}
In the diagram $L13n9366(1,\frac{3}{4},5)$, do $-1$ twist on $C_0$.
This yields a strongly invertible diagram of a $2$--bridge link, which is $L8a12$.
See Fig.~\ref{fig:L8a12}.

\begin{figure}[th]
\centerline{\includegraphics[bb=0 0 307 104, width=9cm]{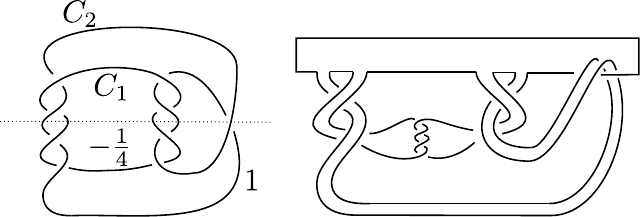}}
\vspace*{8pt}
\caption{Left: A strongly invertible position after $-1$ twist on $C_0$ in the diagram $L13n9366(1,\frac{3}{4},5)$.  Right: The link after tangle replacement.
\label{fig:L8a12}}
\end{figure}

Take the quotient.  Then it is not hard to see that the tangle replacement yields the Montesinos knot $(\frac{2}{3},\frac{2}{5},-\frac{1}{5})$, which is the mirror of $K12n278$ (Fig.~\ref{fig:L8a12}).
\end{proof}

\section{$o9\_40363$}

\subsection{$82$--surgery}

We use $L14n63000$ again (Fig.~\ref{fig:start}).
The only difference from the case of $t12681$ in Section \ref{sec:t12681} is the coefficient on $C_0$.

\begin{lemma}
$L14n63000(\frac{3}{4},1,\frac{1}{2},1)$ represents $(-82)$--surgery on the mirror of $o9\_40363$.
\end{lemma}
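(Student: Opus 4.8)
The plan is to follow the $62$--surgery computation for $t12681$ (Lemma~\ref{lem:t12681-62}) essentially verbatim, the only difference being the Rolfsen twists used to remove $C_0$, whose coefficient is now $\frac{3}{4}$ rather than $\frac{2}{3}$. First I would do a $(-1)$--twist on $C_3$ (coefficient $1$) to erase it; exactly as in the $t12681$ computation this lowers the coefficient of $C_0$ by $1$, here from $\frac{3}{4}$ to $-\frac{1}{4}$. Four twists on $C_0$ then send its coefficient to $\infty$, erasing $C_0$ while inserting four full twists on the strands running through it---this is the only place the argument diverges from the $t12681$ case, where $-\frac{1}{3}$ required three twists. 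A final $2$ twists on $C_2$ erase $C_2$ and present the remaining component $C_1$ as a knot in closed--braid form. Bookkeeping the framing of $C_1$ across these three twists, exactly as before but with the extra twist at $C_0$, should give surgery coefficient $-82$; the sign, and the fact that the reduced knot is a \emph{mirror}, are forced by the signs of the twists, matching the $(-61)$--surgery phrasing used for $t12681$.

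It then remains to identify the closed braid. Reading its word $\beta$ off the figure, I would show, as in every previous case, that the closure of $\beta$ is $o9\_40363$, which appears in Table~\ref{table:braid} as the closure of a positive $7$--braid $\gamma$ of length $72$. The mechanism is the usual one: conjugate $\beta$ by an explicit braid $\alpha$ so that a full--twist block $(1,2,3,4,5,6)^{k}$ becomes visible and central, then use that centrality together with the generator--renaming symmetry $\sigma_i\mapsto\sigma_{7-i}$ and reversal of the word to match the result against a suitable conjugate of $\gamma$. Because $(1,2,3,4,5,6)^{7}$ is one full twist in $B_7$, the central block can be moved freely, which is exactly what makes the matching possible.

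The main obstacle is precisely this braid identification. Since $o9\_40363$ is the longest and widest of the census knots handled here, producing the conjugator $\alpha$ and checking the resulting word identity is markedly heavier than in the $t12681$ case, and is the step that in practice must be located with machine help before being recorded as a finite, by--hand verification. By contrast, the Rolfsen--twist reduction and the coefficient computation are routine once the single modification at $C_0$ (four twists instead of three) is noted.
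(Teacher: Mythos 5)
Your plan coincides with the paper's proof: a $-1$ twist on $C_3$, then $4$ twists on $C_0$ (its coefficient having dropped from $\frac{3}{4}$ to $-\frac{1}{4}$), then twists on $C_2$, followed by a braid identification via an explicit conjugation, centrality of the full-twist block (the paper's $\beta$ contains $(1,2,3,4,5,6)^{-14}$, i.e.\ $-2$ full twists in $B_7$), the renaming $\sigma_i\mapsto\sigma_{7-i}$, and reversal of the word, matched against a conjugate of the Table \ref{table:braid} word $\gamma$ -- exactly the mechanism you describe, with $\alpha=[-4,-6]$ on the $\beta$ side. One bookkeeping correction: the final twists on $C_2$ are $-2$, not $+2$ as in the $t12681$ case, because the extra twist on $C_0$ also shifts the residual coefficient of $C_2$ (from $-\frac{1}{2}$ to $+\frac{1}{2}$), so the divergence from Lemma \ref{lem:t12681-62} is not confined to the twists at $C_0$; with that sign fixed, the outcome is what the paper obtains, namely a closed $7$--braid with negative full twists whose closure is the mirror of $o9\_40363$, carrying coefficient $-82$.
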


\begin{proof}
As in the proof of Lemma \ref{lem:t12681-62}, 
do $-1$ twist on $C_3$, and then $4$ twists on $C_0$.
See Fig.~\ref{fig:o9_40363-82}.

\begin{figure}[th]
\centerline{\includegraphics[bb=0 0 542 139, width=12cm]{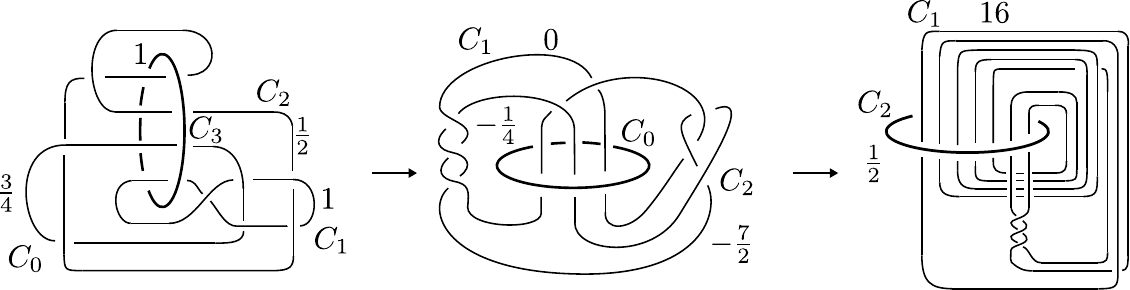}}
\vspace*{8pt}
\caption{Do $-1$ twist on $C_3$, and then $4$ twists on $C_0$.
\label{fig:o9_40363-82}}
\end{figure}

Finally, $-2$ twists on $C_2$ changes $C_1$ into a knot with coefficient $-82$, which is
of a closed braid form with $7$ strands.
It is the closure of a braid
\[
\beta=[(1,2,3,4,5,6)^{-14},5,4,3,2,6,5,4,3,2,2,2,-1,3,4],
\]
where $(1,2,3,4,5,6)^{-14}$ corresponds to $-2$ full twists.
Then for $\alpha=[-4,-6]$,
\begin{equation}\label{eq:o9_40363-82}
\alpha^{-1}\beta \alpha=
[(1,2,3,4,5,6)^{-14},5,4,3,2,6,5,4,3,2,2,2,2,-1,3].
\end{equation}

We verify that the closure of $\beta$ is the mirror of $o9\_40363$. 
From Table \ref{table:braid},  $o9\_40363$ is the closure of a braid
\[
\begin{split}
\gamma&=[1, 2, 1, 3, 4, 5, 4, 4, 4, 5, 4, 6, 3, 6, 2, 5, 4, 3, 5, 4, 6, 5, 6, 6, 5, 4, 6, 3, 5, 2, 4, 1, \\
& 3, 5, 2, 4, 6, 3, 5, 4, 6, 5, 6, 6, 5, 4, 6, 3, 5, 2, 4, 6, 1, 3, 5, 2, 4, 6, 3, 5, 4, 6, 5, 4, 6,\\
& 3, 5, 2, 4, 3, 5, 4].
\end{split}
\]
Set $\alpha=[1,2,3,5,4,5,-2,-1,6,5,4,6,5,6]$.  Then
\[
\alpha^{-1}\gamma \alpha=[-4,6,(-5)^{4},-4,-3,-2,-1,-5,-4,-3,-2,(6,5,4,3,2,1)^{14}].
\]
If we rename the generators $\sigma_i$ with $\sigma_{7-i}$, then
this is
\[
[-3,1,(-2)^4,-3,-4,-5,-6,-2,-3,-4,-5,(1,2,3,4,5,6)^{14}],
\]
whose closure is the mirror of (\ref{eq:o9_40363-82}).
\end{proof}

\begin{theorem}
The diagram $L14n63000(\frac{3}{4},1,\frac{1}{2},1)$ represents the double branched cover of $L12n785$.
\end{theorem}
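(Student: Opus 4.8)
The plan is to mimic the proof of Theorem~\ref{thm:t12681-62} as closely as possible, taking advantage of the fact that the surgery diagram $L14n63000(\frac{3}{4},1,\frac{1}{2},1)$ differs from the $t12681$ diagram $L14n63000(\frac{2}{3},1,\frac{1}{2},1)$ only in the coefficient assigned to $C_0$. First I would perform a $-1$ twist on $C_1$ to move the diagram into a strongly invertible position. By analogy with the $t12681$ case, where this operation produced the mirror of $L10n92(\frac{1}{2},0,\frac{1}{3})$, I expect the change $\frac{2}{3}\mapsto\frac{3}{4}$ on $C_0$ to be reflected by a single altered SnapPy coefficient, so that the strongly invertible diagram is the mirror of $L10n92(\frac{1}{2},0,\frac{1}{4})$.

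Next I would take the quotient of this diagram under the involution and carry out the Montesinos trick. Because only the $C_0$ coefficient has changed, all tangle replacements except the one coming from the image of $C_0$ should coincide with those in Figure~\ref{fig:L10n92q}; the single altered tangle is determined by the new coefficient $\frac{3}{4}$ together with the appropriate writhe correction. This yields a two-component link of the same overall shape as the one in that figure, differing only in the one adjusted tangle.

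Finally, I would identify the resulting link with $L12n785$ (up to mirror image). As in the companion-link cases throughout the paper---compare Figures~\ref{fig:L10n92q} and~\ref{fig:L11n172}---this is done by isotoping the diagram and tracking how one component wraps around the other until it matches a standard diagram of $L12n785$. The main obstacle is exactly this last identification: the Montesinos trick itself is routine once the strongly invertible position is fixed, but producing the explicit sequence of isotopies that certifies the equivalence requires careful bookkeeping of the wrapping pattern. Checking that the knot types of the two components and their linking number agree with those of $L12n785$ gives a useful preliminary verification before the full deformation is drawn.
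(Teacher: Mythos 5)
Your proposal follows the paper's proof essentially verbatim: the paper likewise observes that only the $C_0$ coefficient differs from Theorem~\ref{thm:t12681-62}, reuses the strongly invertible position of Fig.~\ref{fig:L14n63000}, replaces only the tangle $A$ in Fig.~\ref{fig:L10n92q} (with $-\frac{1}{4}=[0,4]$), and identifies the result as the mirror of $L12n785$ by tracking how the knotted component (the knot $6_1$, matching your component-type check) wraps around the unknotted one. The only work left implicit in your plan is drawing the explicit isotopy sequence of Fig.~\ref{fig:L12n785}, which is exactly what the paper supplies.
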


\begin{proof}
The only difference with Theorem \ref{lem:t12681-62} is the coefficient of $C_0$.
Hence we replace the tangle $A$ in Fig.~\ref{fig:L10n92q} with $-\frac{1}{4}=[0,4]$.

As in Fig.~\ref{fig:L10n92q}, we modify the link after the tangle replacement.
By examining how the knotted component, which is the knot $6_1$, wraps around the unknotted component,
we can see that  the resulting link is the mirror of $L12n785$  as shown in Fig.~\ref{fig:L12n785}.
\end{proof}

\begin{figure}[th]
\centerline{\includegraphics[bb=0 0 482 113, width=12cm]{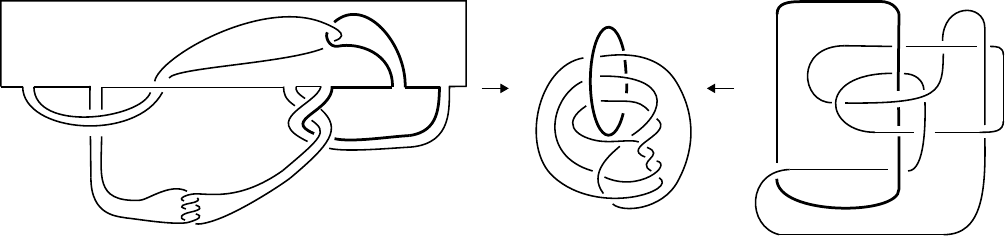}}
\vspace*{8pt}
\caption{The link after tangle replacement is the mirror of $L12n785$ (Right).
\label{fig:L12n785}}
\end{figure}

\subsection{$83$--surgery}

\begin{lemma}
$L12n1968(1,\frac{7}{2},\frac{1}{4})$ represents $83$--surgery on $o9\_40363$.
\end{lemma}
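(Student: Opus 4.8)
The plan is to follow the template used in the two earlier $L12n1968$ computations, where $L12n1968(\frac{1}{3},\frac{7}{2},1)$ and $L12n1968(\frac{1}{4},5,1)$ were shown to represent surgeries on $t12681$ and on the mirror of $o9\_39162$. First I would collapse the three-component diagram $L12n1968(1,\frac{7}{2},\frac{1}{4})$ to a single knot by a sequence of Rolfsen twists along the two surgery components $C_1$ and $C_2$. The component $C_2$ carries coefficient $\frac{1}{4}$, so $-4$ twists along it erase it and, exactly as in the $t12681$ computation where $\frac{7}{2}$ was converted into $\frac{1}{2}$, should render the coefficient on $C_1$ of the form $\pm 1/m$; a further twist then erases $C_1$. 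Tracking the linking numbers, this should leave the remaining component $C_0$ with surgery coefficient $83$, the difference $83-1=82$ being the accumulated twisting contribution.

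Next I would deform $C_0$ into a closed braid and read off an explicit word $\beta$, as in Fig.~\ref{fig:o9_39162-64-2}. Since the standard braid $\gamma$ for $o9\_40363$ in Table~\ref{table:braid} uses the generators $\sigma_1,\dots,\sigma_6$, I expect $\beta$ to live on $7$ strands and to carry a central two-full-twist factor $(1,2,3,4,5,6)^{\pm 14}$, as in the companion $82$-surgery braid (\ref{eq:o9_40363-82}). Because the present statement is a positive surgery on $o9\_40363$ itself, rather than a negative surgery on a mirror as in the earlier $L12n1968$ lemmas, I expect the final identification to match $\beta$ directly with $\gamma$ rather than with its mirror.

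The decisive step is the braid-word identification. Here I would exhibit an explicit conjugating braid $\alpha$ bringing $\gamma$ (or $\beta$) into a normal form that isolates the central full-twist factor, then apply the generator renaming $\sigma_i\mapsto\sigma_{7-i}$ and, if necessary, reverse the order of the word. Since the closure of a braid is unchanged under conjugation, cyclic permutation, this renaming, and word reversal, matching the two normal forms identifies the closure of $\beta$ with $o9\_40363$. The hard part will be precisely this identification: locating the correct $\alpha$ on $7$ strands, where the length-$14$ central factor and the long positive word for $o9\_40363$ make the symbolic bookkeeping by far the most laborious ingredient. As elsewhere in the paper, the intermediate deformations carrying $C_0$ to the braid $\beta$ are most cleanly justified by the accompanying figures rather than by symbolic computation.
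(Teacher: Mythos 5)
Your proposal is correct and follows essentially the same route as the paper: $-4$ twists on $C_2$ erase it, then $2$ twists on $C_1$ leave $C_0$ with coefficient $83$ as the closure of a $7$--strand braid $\beta$ containing the central factor $(1,2,3,4,5,6)^{14}$, and the identification is by conjugation together with the symmetries you list. The only cosmetic difference is that the paper does not rematch against the Table~\ref{table:braid} word $\gamma$ from scratch; it conjugates $\beta$ by the short word $\alpha=[1,-2,-3,-2,-3,1,6,5]$ and recognizes the result as the mirror of the previously established word (\ref{eq:o9\_40363-82}), thereby reusing the $82$--surgery computation and avoiding the laborious step you correctly flag as the hardest part.
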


\begin{proof}
Do $-4$ twists on $C_2$ to erase it (Fig.~\ref{fig:o9_40363-83}).
Then $2$ twists on $C_1$ changes $C_0$ to a knot with coefficient $83$, which 
is of a closed braid form of $7$ strands.

\begin{figure}[th]
\centerline{\includegraphics[bb=0 0 400 190, width=9cm]{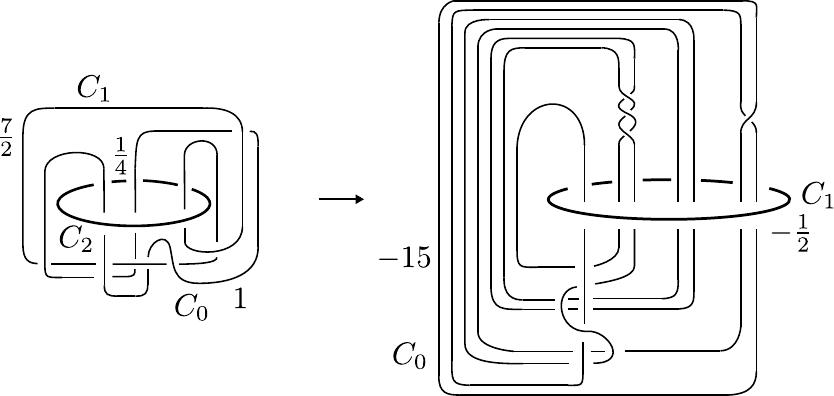}}
\vspace*{8pt}
\caption{Do $-4$ twists on $C_2$, and then $2$ twists on $C_1$.
\label{fig:o9_40363-83}}
\end{figure}

Let 
\[
\beta=[(1,2,3,4,5,6)^{14}, 1,(-5)^3,-2,-3,-2,-3,-4,-5,-3,-4,-5,-6].
\]
Then the knot is the closure of $\beta$.
Taking a conjugation with 
$\alpha=[1,-2,-3,-2,-3,1,6,5]$,
we have
\[
\alpha^{-1}\beta\alpha=[(1,2,3,4,5,6)^{14},-5,-4,-3,-2,-6,-5,-4,-3,(-2)^4,1,-3],
\]
which is the mirror of (\ref{eq:o9_40363-82}).
\end{proof}

\begin{theorem}
The diagram $L12n1968(1,\frac{7}{2},\frac{1}{4})$ represents the double branched cover of $K12n479$.
\end{theorem}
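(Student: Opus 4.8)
The plan is to imitate the proof of the $61$-surgery (Theorem~\ref{lem:t12681dbc}), since both start from $L12n1968$; the only structural change is that the asymmetric knot is now carried by the component $C_0$ rather than $C_2$. In $L12n1968$ the component $C_0$ is unknotted, and in the present diagram it carries coefficient $1$, so a single $-1$ twist along $C_0$ blows it down and erases it, at the cost of modifying the coefficients on the two surviving components $C_1$ and $C_2$ according to their linking numbers with $C_0$. Blowing down the star is precisely what restores symmetry: the asymmetry of $o9\_40363$ is produced only by the interaction of $C_0$ with the surgery, so once $C_0$ is gone the residual two-component link on $(C_1,C_2)$ should admit a strong inversion. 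I would then identify this diagram in SnapPy, just as the pretzel link $P(2,4,-3)=L9n14$ was identified in Theorem~\ref{lem:t12681dbc}.

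With a strongly invertible diagram in hand, I would take the quotient around the axis of the involution and carry out the Montesinos trick. Each of $C_1$ and $C_2$ becomes a rational tangle slot, and the tangle inserted at the image of $C_i$ is obtained from its surgery coefficient by subtracting its writhe in the diagram and rewriting the result as a continued fraction tangle, exactly as in the computations $-\frac{11}{3}+3=-\frac{2}{3}=[0,1,-2]$ and $\frac{5}{2}=[2,-2]$ in the proof of Theorem~\ref{lem:t12681dbc}. After reading the writhes of $C_1$ and $C_2$ off the symmetric diagram, I would convert the two corrected coefficients into their tangles.

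Finally, I would assemble the branch locus from these two tangles together with the image of the axis and identify it with the mirror of $K12n479$. Since $K12n479$ is a knot, I expect the branch set to appear as a Montesinos knot, as happened in the $K11n89$ and $K12n278$ cases, where the trick produced $(\frac{3}{5},\frac{2}{3},-\frac{1}{4})$ and $(\frac{2}{3},\frac{2}{5},-\frac{1}{5})$; matching the resulting three fractions, up to the standard Montesinos moves and a global mirror, would complete the identification.

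The hard part will be the first step. Because blowing down $C_0$ redistributes framings through the squares of the linking numbers of $C_1$ and $C_2$ with $C_0$, neither the new coefficients nor the strongly invertible position is automatic, and---as emphasized in the introduction---producing a manifestly strongly invertible surgery diagram is the genuinely delicate point of the method. Once that diagram and its writhes are secured, the remaining tangle arithmetic and the recognition of the Montesinos knot $K12n479$ should go through routinely, following the template of Theorem~\ref{lem:t12681dbc}.
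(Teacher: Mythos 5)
Your proposal follows essentially the same route as the paper: blow down the $1$--framed component $C_0$ by a $-1$ twist, observe that the residual link $C_1\cup C_2$ is strongly invertible, and apply the Montesinos trick with writhe-corrected rational tangle replacements. The paper shortcuts your planned SnapPy identification by noting that the first and third components of $L12n1968$ are interchangeable, so the blow-down gives the same pretzel link $L9n14=P(2,4,-3)$ as in Theorem~\ref{lem:t12681dbc} with coefficients $(\frac{5}{2},-\frac{15}{4})$; the quotient, with tangles $[0,1,-3]$ and $[2,-2]$, is then the Montesinos knot $(\frac{4}{7},\frac{3}{4},-\frac{1}{3})$, which is $K12n479$ itself rather than its mirror as you anticipated.
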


\begin{proof}
Perform $-1$ twist on $C_0$.  Then  the result is the pretzel link $L9n14=P(2,4,-3)$, which is the same as in the proof of Theorem \ref{lem:t12681dbc}.
(In the link $L12n1968$, the first and third components are interchangeable.)
Thus the diagram is equal to that of Fig.~\ref{fig:L9n14} where $C_1$ (resp. $C_2$) has coefficient $\frac{5}{2}$ (resp. $-\frac{15}{4}$).

Take the quotient, which is Fig.~\ref{fig:L9n14} where the tangles $A$ and $B$ are replaced with $[0,1,-3]$ and $[2,-2]$, respectively.
Then it is easy to see that the result is the Montesinos knot $(\frac{4}{7},\frac{3}{4},-\frac{1}{3})$, which is  $K12n479$.
\end{proof}

\section{$o9\_40487$}

\subsection{$37$--surgery}

\begin{lemma}
$L13n8037(-\frac{1}{2},-1,-3)$ represents $(-37)$--surgery on the mirror of $o9\_40487$.
\end{lemma}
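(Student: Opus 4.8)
The plan is to follow the template already used for the other negative surgeries: eliminate the two auxiliary components $C_0$ and $C_1$ of $L13n8037(-\frac12,-1,-3)$ by Rolfsen twists, read off the surviving component $C_2$ as the closure of a braid, and then recognize that closed braid (up to mirror) as $o9\_40487$ via a word manipulation. Since $L13n8037$ already appears in the $50$--surgery on $o9\_38928$, I would reuse the geometry of that link and only the coefficients will differ.

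First I would carry out the framing bookkeeping. Perform $2$ twists on $C_0$, which turns its coefficient $-\tfrac12$ into $\infty$ and so erases it; because $C_1$ and $C_2$ each link $C_0$ once, both of their coefficients increase by $2$, so $C_1$ becomes $1$ and $C_2$ becomes $-1$. This is precisely the intermediate picture from the $50$--surgery on $o9\_38928$, except that $C_0$ has now been twisted twice rather than three times, so the linking number of $C_1$ with $C_2$ is one less, namely $6$ instead of $7$. Then $-1$ twist on $C_1$ erases it and, via the linking $6$, changes the coefficient on $C_2$ by $-6^2=-36$. Hence $C_2$ acquires coefficient $-1-36=-37$, matching the claim, and I expect it to fall into a closed-braid form after a short isotopy.

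Second I would identify the knot. The aim is to reduce $C_2$ to the closure of a (negative) $5$--braid $\beta$ containing a central full-twist block, a power of $(1,2,3,4)^{5}$; the $5$ strands and the mirror are forced by the braid word for $o9\_40487$ in Table~\ref{table:braid}, which uses generators $\sigma_1,\dots,\sigma_4$ and is positive. Writing that word as $\gamma$, I would conjugate $\gamma$ by an explicit braid $\alpha$ so as to expose the same central full twist, then rename the generators by $\sigma_i\mapsto\sigma_{5-i}$, reverse the order of the word, and check that the outcome agrees with $\beta$. Because $\beta$ is negative, its closure is the mirror of $o9\_40487$, and with coefficient $-37$ this yields exactly $(-37)$--surgery on the mirror of $o9\_40487$.

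The main obstacle is the last step, and it splits into two finicky pieces. The genuinely delicate part is the explicit isotopy that presents the surgered $C_2$ as a clean closed braid (this is the figure-level work, and getting a braid with a manifestly central full-twist block is what makes the subsequent algebra tractable). After that, locating the correct conjugator $\alpha$ and verifying the braid identity is a finite but error-prone computation; the renaming $\sigma_i\mapsto\sigma_{5-i}$ together with word reversal is what reconciles the orientation/mirror discrepancy, and the central full twist must be checked to be exactly a power of $(1,2,3,4)^{5}$ so that it can be moved freely during the matching.
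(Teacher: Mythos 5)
Your proposal is correct and takes essentially the same route as the paper: erase $C_0$ by $2$ twists and $C_1$ by a $-1$ twist (the linking number $6$ of $C_1$ with $C_2$ giving the coefficient $-1-36=-37$), then present the surgered $C_2$ as the mirror of a $5$--braid closure containing the central full twist $(\sigma_1\sigma_2\sigma_3\sigma_4)^5$ and match it to the table word for $o9\_40487$ by conjugation, the renaming $\sigma_i\mapsto\sigma_{5-i}$, and word reversal. The paper supplies exactly the pieces you flagged as delicate: an intermediate $6$--braid isotoped to the explicit $5$--braid $\beta=[(1,2,3,4)^{5},1,2,3,4,4,3,2,1,1,2,2,3,4,-2]$, together with the explicit conjugators $\alpha=[1,2,3,1,4]$ and $\alpha'=[-3,1,2,3]$.
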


\begin{proof}

Performing $2$ twists on $C_0$ and then $-1$ twist on $C_1$ change
$C_2$ into a knot with coefficient $-37$ as the  closure of a $6$--braid (Fig.~\ref{fig:o9_40487-37}).

\begin{figure}[th]
\centerline{\includegraphics[bb=0 0 380 151, width=10cm]{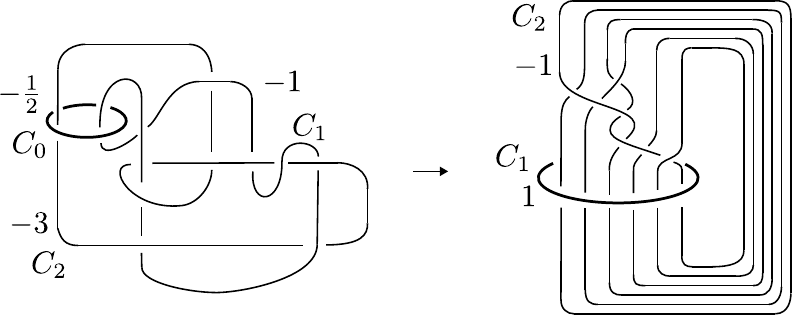}}
\vspace*{8pt}
\caption{Do $2$ twists on $C_0$, and then $-1$ twist on $C_1$.
\label{fig:o9_40487-37}}
\end{figure}

Moreover, it is deformed to the closure of a $5$--braid.
Let
\[
\beta=[(1,2,3,4)^{5},1,2,3,4,4,3,2,1,1,2,2,3,4,-2].
\]
Then our knot is the mirror of the closure of $\beta$ (Fig.~\ref{fig:o9_40487-37-2}).

\begin{figure}[th]
\centerline{\includegraphics[bb=0 0 428 164, width=10cm]{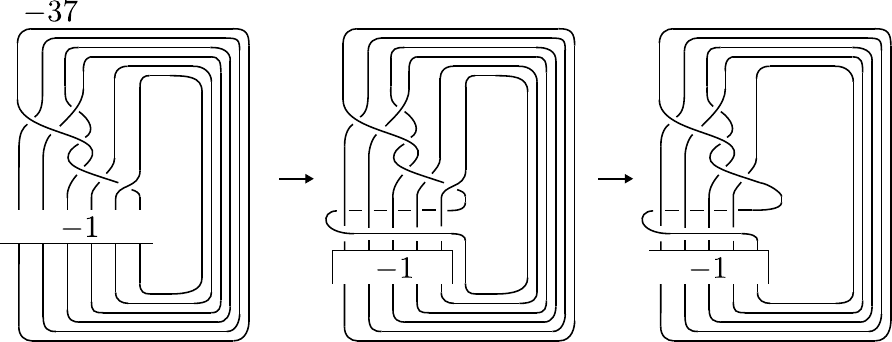}}
\vspace*{8pt}
\caption{The knot is deformed into the closure of a $5$--braid.
\label{fig:o9_40487-37-2}}
\end{figure}

By taking a conjugation with $\alpha=[1,2,3,1,4]$,
\begin{equation}\label{eq:o9_40487-37}
\alpha^{-1}\beta \alpha=[(1,2,3,4)^5,4,3,2,1,1,2,2,1,3,2,4,3].
\end{equation}

We will verify that this is equal to $o9\_40487$.
From Table \ref{table:braid}, $o9\_40487$ is
\[
\gamma=[1, 2, 1, 3, 3, 2, 2, 3, 4, 3, 2, 1, 3, 2, 1, 3, 2, 4, 2, 4, 1, 4, 2, 1, 3, 2, 3, 4, 3, 4, 3, 2].
\]
Let $\alpha'=[-3,1,2,3]$.
Then
\begin{equation}\label{eq:o9_40487-37-2}
\alpha'^{-1}\gamma \alpha'=[(1,2,3,4)^{5},2,1,3,2,4,3,3,4,4,3,2,1].
\end{equation}
By renaming the generators $\sigma_i$ with $\sigma_{5-i}$, this yields
\[
[(4,3,2,1)^5, 3,4,2,3,1,2,2,1,1,2,3,4]
\]
Reversing the order of words, we can see that this is (\ref{eq:o9_40487-37}).
\end{proof}

\begin{theorem}
The diagram $L13n8037(-\frac{1}{2},-1,-3)$ represents the double branched cover of $K11n147$.
\end{theorem}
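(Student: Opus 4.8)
The plan is to follow exactly the template established in the preceding proofs (Theorems \ref{lem:t12533-37} and \ref{lem:t12533-38}), since this theorem asserts a double-branched-cover identification for the surgery diagram $L13n8037(-\frac{1}{2},-1,-3)$, whose knot has already been identified as the mirror of $o9\_40487$ in the preceding lemma. The first step is to bring the diagram into a strongly invertible position by performing a suitable small twist on one of the auxiliary surgery components. By analogy with the other $37$--surgery cases in the paper (e.g. Theorem \ref{lem:t12533-37}, where a $+1$ twist on $C_2$ exposed the symmetry), I would perform a single twist on one of the components and check that the resulting diagram admits a strong involution; in SnapPy terms this diagram should be recognizable as (the mirror of) a small named link with appropriate surgery coefficients, just as $L14n58444$ became $L14n24287$ and $L12n1968$ became $L9n14=P(2,4,-3)$ in the earlier arguments.

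Once the strongly invertible position is fixed, I would apply the Montesinos trick: take the quotient of the diagram by the involution and record, for each surgery component $C_i$, the rational tangle that replaces its image. The key bookkeeping is the writhe correction — as emphasized throughout the paper, the tangle slope for $C_i$ is the surgery coefficient shifted by minus the writhe of $C_i$ in the strongly invertible diagram (compare the computation $\frac{23}{5}-5=\frac{13}{2}$ and $\frac{19}{2}-3=\frac{13}{2}$ in Theorem \ref{lem:t12533-37}, and $-\frac{11}{3}+3=-\frac{2}{3}$ in Theorem \ref{lem:t12681dbc}). I would compute each corrected slope, convert it to continued-fraction tangle notation $[\,\cdots]$, and then insert these rational tangles into the quotient diagram to obtain the branching set.

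The remaining step is to identify the resulting knot as the mirror of $K11n147$. Since $K11n147$ is a knot (not a link), I expect the branching set here to be a single-component quasi-alternating knot, so the cleanest identification will likely be either as a Montesinos knot with an explicit triple of fractions — mirroring the successful identifications $(\frac{3}{5},\frac{2}{3},-\frac{1}{4})=K11n89$ and $(\frac{2}{3},\frac{2}{5},-\frac{1}{5})=K12n278$ — or via a sequence of planar isotopies reducing the quotient to a recognizable diagram, as was done for $K12n407$ and $K11n172$. I would present this as a figure exhibiting the series of deformations from the tangle-replaced quotient to the standard diagram of the mirror of $K11n147$.

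The main obstacle, as the paper itself flags in the introduction, is the very first step: locating a strongly invertible position for the partial surgery diagram. The knot $o9\_40487$ is asymmetric, so only the \emph{full} surgery (with all auxiliary components filled) carries the involution used for the Montesinos trick; one must verify that the particular twist chosen genuinely realizes this symmetry in the diagram rather than merely simplifying it. The writhe computation in the quotient is delicate and error-prone, since a miscounted crossing propagates into a wrong tangle slope and hence a wrong branching set. Given that the preceding lemma has already pinned the surgered manifold as $(-37)$--surgery on the mirror of $o9\_40487$, correctness can be cross-checked by confirming that the double branched cover of the identified $K11n147$ has the expected order of first homology, namely $37$.
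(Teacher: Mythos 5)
Your overall framework (Montesinos trick after reaching a strongly invertible position, writhe-corrected tangle slopes, a final diagrammatic identification, and the determinant check $\det K11n147 = 37$) is the right one, but the specific step you build everything on fails in this case, and it fails exactly where you hedge. For $L13n8037(-\frac{1}{2},-1,-3)$ there is no single twist on an auxiliary component that produces a strongly invertible link diagram of the kind your template needs: after erasing $C_0$ by $2$ twists (coefficient $-\frac{1}{2}$), the remaining two-component link $C_1\cup C_2$ is the Montesinos link $(\frac{1}{5},-\frac{2}{3},\frac{3}{7})$, whose components are interchangeable but which is \emph{not} strongly invertible, so the quotient-plus-rational-tangles procedure you describe has no diagram to run on. The paper's resolution is to keep filling: a further $+1$ twist erases $C_2$ (coefficient $-1$) and leaves $C_1$ as a single strongly invertible \emph{knot} with integer coefficient $37$, namely the mirror of the census knot $m239$. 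In other words, the same manifold that the preceding lemma exhibits as $(-37)$--surgery on the asymmetric mirror of $o9\_40487$ is re-expressed as $37$--surgery on a strongly invertible knot, and the Montesinos trick is then applied to that integer surgery: the quotient involves a \emph{single} tangle replacement by $37-25=12$ right half twists (the strongly invertible diagram of $m239$'s mirror has writhe $25$), not a collection of rational tangles indexed by the surgery components as in your plan.

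This missing idea --- trading the asymmetric surgered knot for a strongly invertible one by filling all auxiliary components --- is the actual content of the proof, and your proposal does not supply it; flagging the obstacle is not the same as resolving it. Your expectation for the identification step is also off in this instance: the branching set is not recognized as a Montesinos knot (as in the $K11n89$ and $K12n278$ cases you cite), but rather the tangle-replaced quotient is deformed into a closed braid and matched against $K11n147$ that way (Fig.~\ref{fig:K11n147}). The homology cross-check you propose is a sound sanity test, but it only certifies $|H_1|$ of the cover and cannot substitute for either the strongly invertible presentation or the explicit identification of the branching set.
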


\begin{proof}
In the diagram $L13n8037(-\frac{1}{2},-1,-3)$,
do $2$ twists on $C_0$ to erase it.
The resulting link consisting of $C_1\cup C_2$ is interchangeable.
(It is the Montesinos link  $(\frac{1}{5},-\frac{2}{3},\frac{3}{7})$.)
Here, $C_1$  and $C_2$ have coefficients $1$ and $-1$, respectively.
Since this link is not strongly invertible, perform further $+1$ twist on $C_2$.
Then we have a strongly invertible knot with coefficient $37$, which is the mirror of  $m239$ in SnapPy.

Figure \ref{fig:m239} shows its diagram.
After taking the quotient, the tangle replacement is done by $12$, because the diagram has writhe $25$.
See Fig.~\ref{fig:m239}.

\begin{figure}[th]
\centerline{\includegraphics[bb=0 0 529 185, width=11cm]{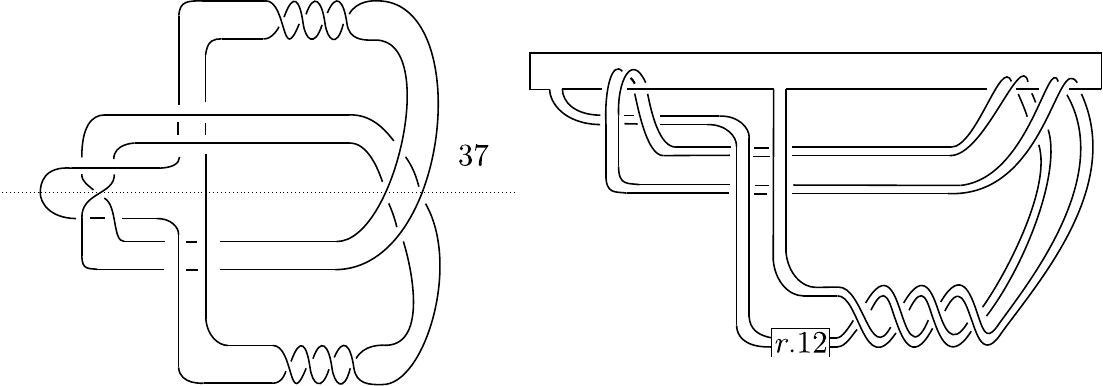}}
\vspace*{8pt}
\caption{Left: A strongly invertible diagram.
Right: The knot after the tangle replacement.
The box contains $12$ right half twists.
\label{fig:m239}}
\end{figure}


We need to identify this as $K11n147$.
As shown in Fig.~\ref{fig:K11n147},
the knot in Fig.~\ref{fig:m239} is expressed as the closure of a braid after some deformation.
Then it is not hard to confirm that this is equivalent to $K11n147$.
\end{proof}

\begin{figure}[th]
\centerline{\includegraphics[bb=0 0 574 460, width=12cm]{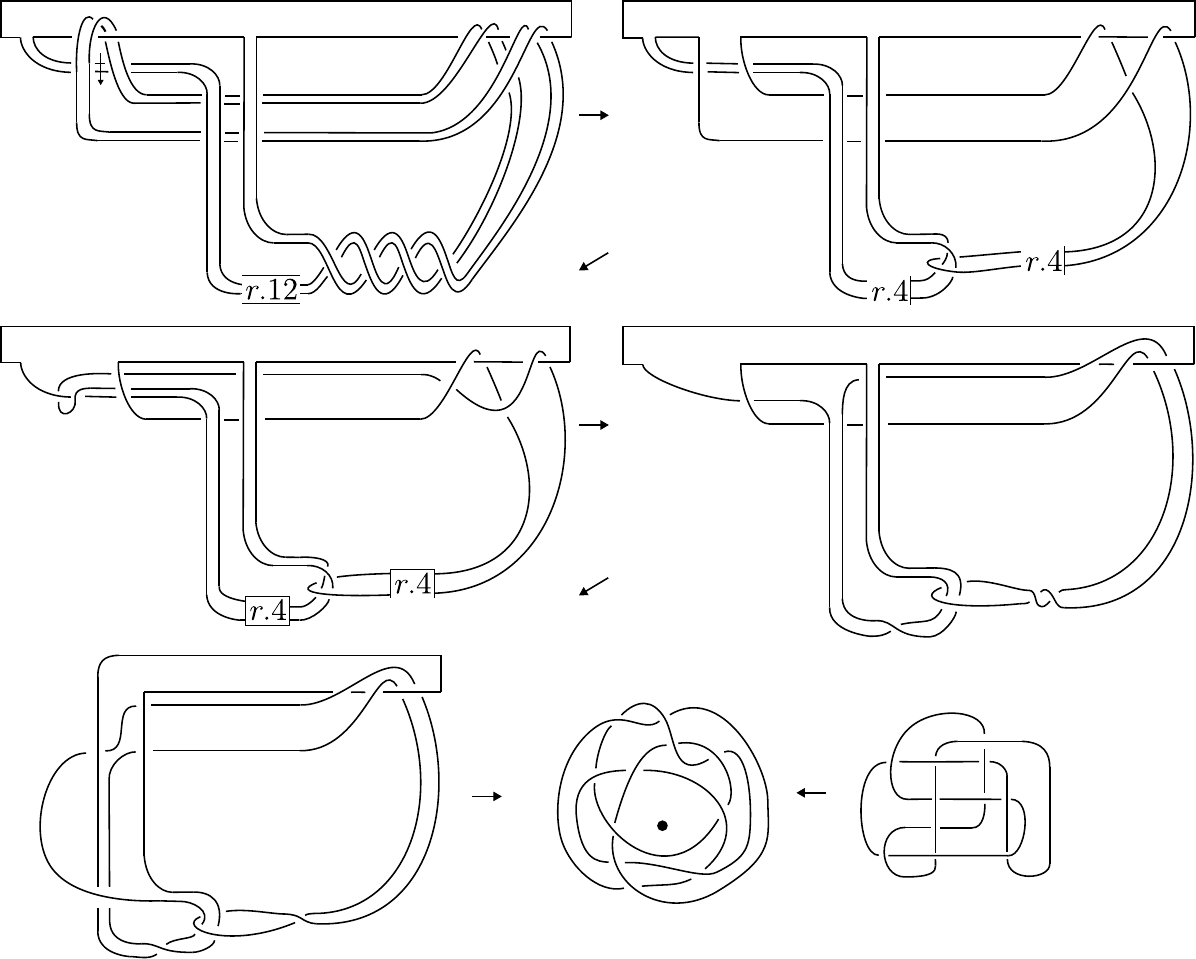}}
\vspace*{8pt}
\caption{The equivalence with $K11n147$ (Bottom Right).
\label{fig:K11n147}}
\end{figure}


\subsection{$38$--surgery}

\begin{lemma}
$L12n1625(-\frac{1}{3},-4,-1)$ represents
$38$--surgery on $o9\_40487$.
\end{lemma}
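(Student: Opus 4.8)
The plan is to mirror the scheme of the $37$--surgery lemma: reduce the surgery diagram to a closed braid by a sequence of Rolfsen twists, and then match braid words. Since the coefficient $-\frac{1}{3}=\frac{1}{-3}$ on $C_0$ is the reciprocal of an integer, $C_0$ can be erased by $3$ twists; this simultaneously introduces full twists into the strands threading $C_0$ and alters the coefficients of $C_1$ and $C_2$. I expect the coefficient $-4$ on $C_1$ to be carried to $\pm 1$ by these twists (through its linking with $C_0$), so that one further twist on $C_1$ erases it as well. After these moves the only surviving component is $C_2$, whose coefficient should have been pushed from $-1$ up to $+38$. Tracking this framing arithmetic precisely through each Rolfsen twist is the first thing I would verify, since it is what pins down the slope as $38$ rather than some nearby value.

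Once $C_2$ is displayed as a closed braid, I would read off its braid word $\beta$ from the diagram and, after a planar isotopy, present it on a minimal number of strands (likely a $5$--braid, as in the $37$--surgery case). Because this is a \emph{positive} $38$--surgery on $o9\_40487$ rather than $(-38)$--surgery on its mirror, $\beta$ should close to $o9\_40487$ itself with no intervening mirror step; in particular I expect $\beta$ to be a positive braid. The cleanest route is then to compare $\beta$ with the braid in (\ref{eq:o9_40487-37}), which the $37$--surgery lemma already identified as $o9\_40487$: conjugating $\beta$ by a suitable element $\alpha$, renaming the generators $\sigma_i\mapsto\sigma_{5-i}$, and reversing the word order---moves under which the central full--twist block $(1,2,3,4)^{5}$ is invariant---I would aim to land exactly on (\ref{eq:o9_40487-37}), equivalently on the conjugate (\ref{eq:o9_40487-37-2}) of the tabulated word $\gamma$.

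The hard part will be producing the explicit conjugating braid $\alpha$. The sequence of twists and the planar isotopy into braid form are essentially bookkeeping, but exhibiting the precise $\alpha$ (together with the correct renaming and reversal) that transforms the geometric braid into the known word is the step carrying the real content, since there is no shortcut beyond a guided search in the braid group. A secondary pitfall is the sign--tracking in the Rolfsen twists, which must be carried out carefully so that the final coefficient comes out to be exactly $+38$ and so that the positive, non--mirrored identification with $o9\_40487$ is the correct one.
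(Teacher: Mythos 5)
Your plan is essentially the paper's proof: it does $3$ twists on $C_0$, then a $+1$ twist on $C_1$ (whose coefficient has indeed become $-1$), leaving $C_2$ with coefficient $38$ in closed braid form, reduced to a $5$--braid $\beta$ and identified with $o9\_40487$ by a single conjugation $\alpha=[-2,-1,-2]$ landing exactly on (\ref{eq:o9_40487-37-2}) --- no renaming or reversal turns out to be needed. The only minor deviations from your expectations are that the braid first appears on $6$ strands before reduction and that the diagrammatic $\beta$ contains one negative letter (eliminated by the conjugation), neither of which affects the argument.
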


\begin{proof}
Do $3$ twists on $C_0$, and then perform $+1$ twist on $C_1$.
Then $C_2$ will be a knot with coefficient $38$ as the closure of a $6$--braid.
See Fig.~\ref{fig:o9_40487-38}.

\begin{figure}[th]
\centerline{\includegraphics[bb=0 0 298 151, width=8cm]{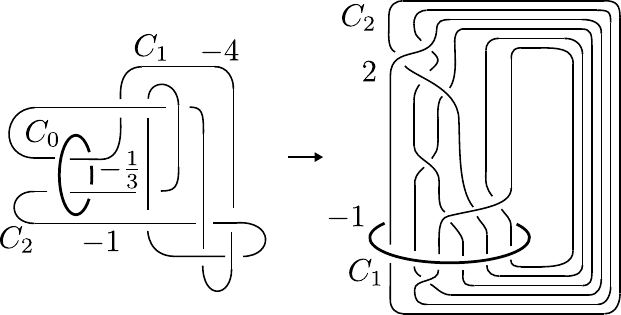}}
\vspace*{8pt}
\caption{Do $3$ twists on$C_0$, and then $+1$ twist on $C_1$ changes
$C_2$ into a knot with coefficient $38$ in a closed braid form of $6$ strands.
\label{fig:o9_40487-38}}
\end{figure}

We can deform this into the closure of a $5$--braid $\beta$ as illustrated in Fig.~\ref{fig:o9_40487-38-2},
where
\[
\beta=[(1,2,3,4)^5,3,2,1,-3,4,3,3,4,4,3,2,1,1,2].
\]
Set $\alpha=[-2,-1,-2]$.
Then 
\[
\alpha^{-1}\beta \alpha=[(1,2,3,4)^5,2,1,3,2,4,3,3,4,4,3,2,1],
\]
which is equal to (\ref{eq:o9_40487-37-2}).
Hence, this knot is $o9\_40487$ as desired. 
\end{proof}

\begin{figure}[th]
\centerline{\includegraphics[bb=0 0 412 385, width=11cm]{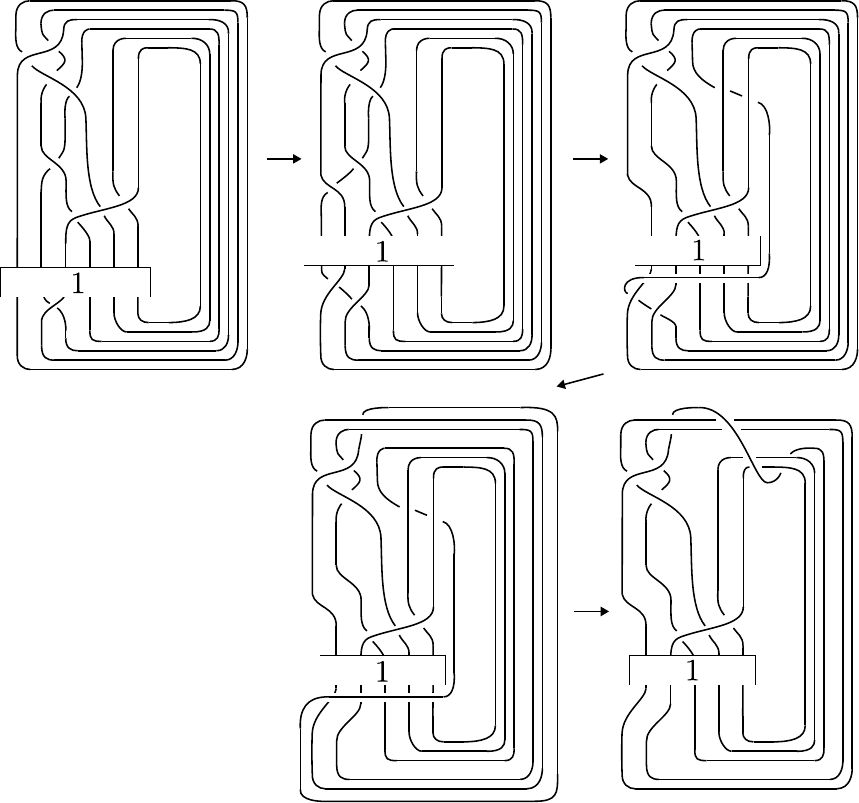}}
\vspace*{8pt}
\caption{Deform into a $5$--braid.
\label{fig:o9_40487-38-2}}
\end{figure}


\begin{theorem}\label{lem:o9_40487-38}
The diagram $L12n1625(-\frac{1}{3},-4,-1)$ represents the double branched cover of $L11n152$.
\end{theorem}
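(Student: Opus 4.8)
The proof will follow the pattern of Theorem~\ref{lem:t12533-38}, in which a link branching set is produced by a twofold tangle replacement. Here the knot component is $C_2$, carrying the integer coefficient $-1$, so that the full link $L12n1625(-\frac{1}{3},-4,-1)$ is not strongly invertible and the Montesinos trick cannot be applied directly. The first step is therefore to perform a suitable integral twist on $C_2$ to reach a strongly invertible position. This twist changes the coefficients of the two surgery components $C_0$ and $C_1$, which are linked through the disk bounded by $C_2$, while leaving the represented surgery unchanged, and it exposes an involution axis. I expect the symmetrized diagram to coincide, up to mirror, with a standard census link carrying the two modified rational coefficients, as occurred for the $62$--surgery diagram in the proof of Theorem~\ref{thm:t12681-62}.

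Next I would pass to the quotient of this strongly invertible diagram under the involution. The images of $C_0$ and $C_1$ become tangle slots, and the Montesinos trick fills each by a rational tangle obtained from its surgery coefficient corrected by the writhe of that component in the quotient diagram, exactly as the tangles $-\frac{5}{3}=[-1,1,-2]$ and $-8$ arose in the proof of Theorem~\ref{lem:t12533-38}. Thus the image of $C_0$ is to be filled by the tangle determined by $-\frac{1}{3}$ minus its writhe and that of $C_1$ by the tangle determined by $-4$ minus its writhe, each written as a continued fraction. After both replacements the branching set is a two-component link.

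The remaining and most delicate step is to identify this branching set with $L11n152$, up to mirror. As in the proofs of Theorems~\ref{lem:t12533-38} and~\ref{thm:t12681-62}, I would not compute any invariant but instead exhibit an explicit isotopy: after normalizing so that one component is a standard unknot, I would record how the other component wraps around it and simplify the diagram through Reidemeister moves and flypes until it visibly matches the census picture of $L11n152$. The difficulty is entirely diagrammatic, namely keeping careful track of the framing contributions coming from the initial symmetrizing twist on $C_2$ and from each of the two tangle replacements; a sequence of figures carrying out this deformation is what will complete the argument.
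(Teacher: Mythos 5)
Your proposal follows essentially the same route as the paper's proof: the paper performs a $+1$ twist on $C_2$ (which erases it, as its coefficient is $-1$) to obtain a strongly invertible diagram of $C_0\cup C_1$ (Fig.~\ref{fig:L13n4344}), takes the quotient with writhe-corrected rational tangle replacements for the images of $C_0$ and $C_1$, and identifies the branching set as $L11n152$ by exhibiting an unknotted component and tracking how the other component wraps around it (Fig.~\ref{fig:L11n152eq}). The details you leave unspecified --- the exact twist and the resulting rational tangles --- are precisely what the paper supplies in its figures, so your blueprint is correct and matches the paper's argument.
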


\begin{proof}
Performing $+1$ twist on $C_2$ yields a strongly invertible link $C_0\cup C_1$ as shown in Fig.~\ref{fig:L13n4344}.

\begin{figure}[th]
\centerline{\includegraphics[bb=0 0 494 158, width=12cm]{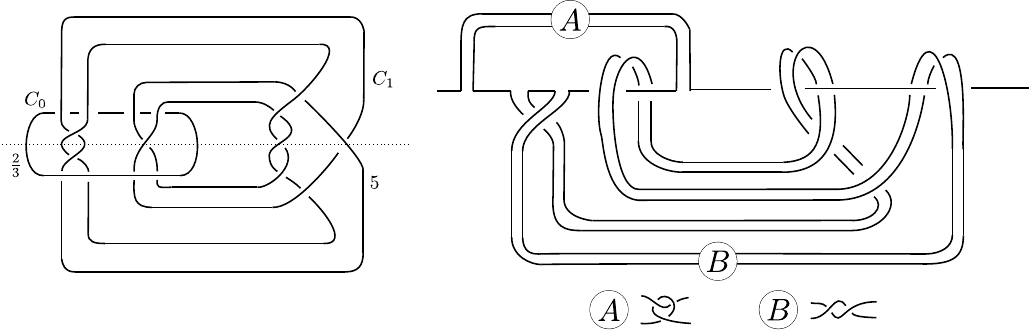}}
\vspace*{8pt}
\caption{Left: A strongly invertible diagram.
Right: The link after the tangle replacement.
\label{fig:L13n4344}}
\end{figure}

After taking the quotient (Fig.~\ref{fig:L13n4344}), we can easily see that there is the unknotted component as shown in Fig.~\ref{fig:L11n152eq}.
Then by examining how the other component wraps around the unknotted one,
we can confirm that this link is $L11n152$.  
\end{proof}


\begin{figure}[th]
\centerline{\includegraphics[bb=0 0 706 162, width=12cm]{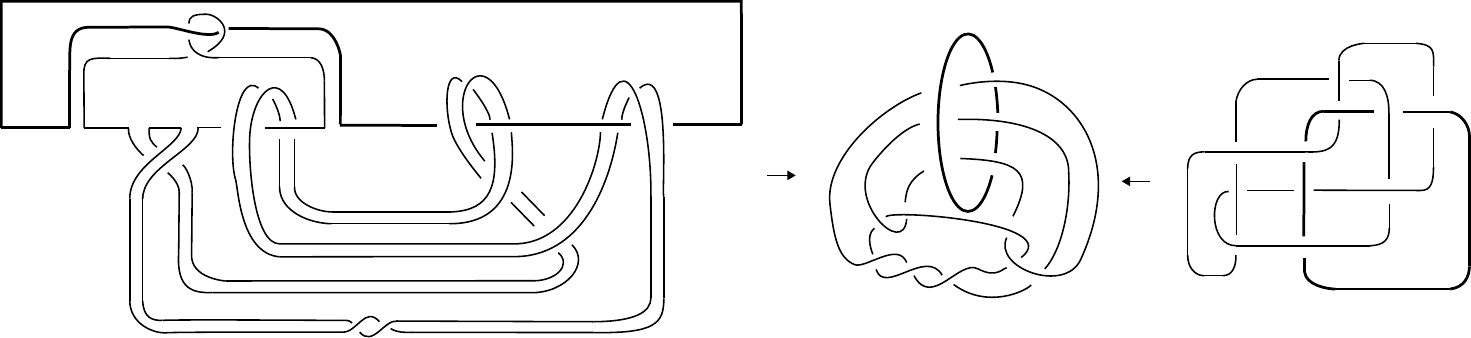}}
\vspace*{8pt}
\caption{The link in Fig.~\ref{fig:L13n4344} is equivalent to $L11n152$ (Right).
\label{fig:L11n152eq}}
\end{figure}

\section{$o9\_40504$}

\subsection{$58$--surgery}

\begin{lemma}
$L14n62791(-\frac{1}{3},-2,\frac{1}{2},-1)$ represents
$58$--surgery on $o9\_40504$.
\end{lemma}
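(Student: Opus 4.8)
The plan is to follow the same recipe already used for the preceding lemmas of this type: convert the surgery description into a closed-braid presentation of the starred component $C_3$ carrying the correct coefficient, and then match that braid word against the tabulated word for $o9\_40504$ by a conjugacy computation in the braid group. Since $o9\_40504$ is the closure of a positive braid and the target slope is the positive integer $58$, I expect the resulting knot to be $o9\_40504$ itself rather than its mirror.

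First I would erase the three auxiliary components $C_0,C_1,C_2$ by Rolfsen twists, tracking the induced change on the coefficient of $C_3$. The component $C_0$ carries $-\frac{1}{3}=\frac{1}{-3}$, so $+3$ twists along it send its coefficient to $\infty$ and erase it, while imprinting the corresponding full twists on the strands through its spanning disk and altering each remaining coefficient by $3\,\mathrm{lk}(C_0,\cdot)^2$. Likewise $C_2$ carries $\frac{1}{2}$ and is erased by $-2$ twists. The integer component $C_1=-2$ cannot be removed by a single Rolfsen twist, so—as in the $o9\_38928$ and $t12681$ cases—I expect one of the twists above to first drive its coefficient to $\pm 1$, after which a single $\mp 1$ twist erases it. Carrying this bookkeeping through, the coefficient on $C_3$ should move from the placeholder $-1$ to $+58$, confirming the asserted slope.

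Next I would deform the remaining component $C_3$ into the closure of a $6$--braid and read off its word $\beta$. To identify the closure of $\beta$ with $o9\_40504$, I would take the tabulated word
\[
\gamma=[1,1,2,1,3,4,\ldots,4,3,4]
\]
from Table \ref{table:braid} and search for a braid $\alpha$ realizing $\alpha^{-1}\gamma\alpha$ (or $\alpha^{-1}\beta\alpha$) as a word of the form $[(1,2,3,4,5)^{k},\,w]$, using that the full-twist factor $(1,2,3,4,5)^{k}$ is central and hence slides freely. The remaining freedom—renaming $\sigma_i\mapsto\sigma_{6-i}$ and reversing the order of the word, both of which preserve the closure—should then line up $\beta$ with $\gamma$ exactly (with no extra mirror, consistent with the positive slope).

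The main obstacle is precisely this final matching step: producing the explicit conjugating braid $\alpha$, together with the correct generator-renaming and word-reversal, that brings $\beta$ and $\gamma$ into a common normal form. This is a conjugacy problem in $B_6$ with no shortcut visible from the diagrams, and it is the computational heart of the argument; by comparison the twist bookkeeping and the braiding of $C_3$, though they demand careful reading of the figures, are routine.
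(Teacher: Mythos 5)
Your twisting plan matches the paper's execution almost move for move: the paper erases $C_0$ by $3$ twists (sending $-\frac{1}{3}$ to $\infty$), which — exactly as you anticipated — drives the coefficient of $C_1$ from $-2$ to $+1$ so that a single $-1$ twist removes it; then $2$ twists on $C_2$ turn $C_3$ into a knot with coefficient $58$ in closed $6$--braid form, and the result is $o9\_40504$ itself, with no mirror, as you predicted from braid positivity. One bookkeeping slip: you fixed ``$-2$ twists'' for $C_2$ from its initial coefficient $\frac{1}{2}$, but by the time $C_2$ is erased the intervening twist on $C_1$ has changed its coefficient to $-\frac{1}{2}$, so the paper performs $+2$ twists there — the same caveat you applied to $C_1$ should have been applied to $C_2$.

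The genuine gap is the one you name yourself: the conjugacy matching in $B_6$ is the entire content of the lemma, and your proposal defers it rather than performing it, so as written it is a correct outline, not a proof. Moreover, one idea needed to execute it is absent from your toolkit list. The word read off the diagram is $\beta=[(1,2,3,4,5)^{12},-2,-3,-4,-5,-1,-2,-3,-4,-4,-5,-5,-4,-3]$, which contains negative letters; before any matching against the positive table word one must \emph{spend} a positive full twist $(1,2,3,4,5)^{6}$ out of the central factor to cancel them. You planned only to let the full twist ``slide freely,'' i.e.\ to use centrality passively; the paper uses it actively to positivize, after which conjugation by $\alpha=[3,4,5,5,4,3]$ yields the normal form $[(1,2,3,4,5)^{8},2,3,2,2,2,3,2]$. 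On the other side, conjugating the word $\gamma$ from Table \ref{table:braid} by $\alpha'=[-4,-3,-2,-1,5,2,4,3,-5,-4,-3,5,4]$ gives $[(1,2,3,4,5)^{8},5,4,5,5,5,4,5]$, and then the renaming $\sigma_i\mapsto\sigma_{6-i}$, one further conjugation by $(5,4,3,2,1)$ to shift indices, and word reversal complete the identification. None of your proposed steps would fail, but until explicit $\beta$, $\alpha$, $\alpha'$ and the full-twist cancellation are produced, the lemma is located rather than proved.
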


\begin{proof}
Do $3$ twists on $C_0$, and then $-1$ twist on $C_1$.
Then $2$ twists on $C_2$ change $C_3$ to a knot  with coefficient $58$.
See Fig.~\ref{fig:o9_40504-58}.

\begin{figure}[th]
\centerline{\includegraphics[bb=0 0 481 192, width=12cm]{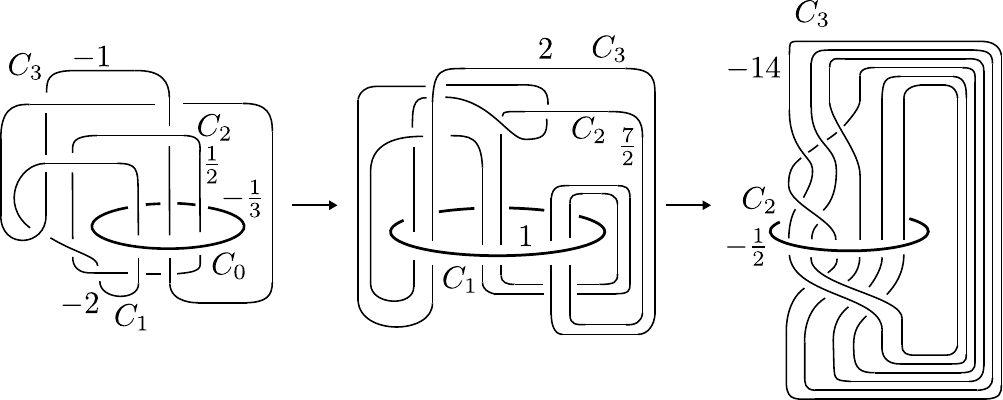}}
\vspace*{8pt}
\caption{Do $3$ twists on $C_0$, and then $-1$ twist on $C_1$.
\label{fig:o9_40504-58}}
\end{figure}

Let
\[
\beta=[(1,2,3,4,5)^{12},-2,-3,-4,-5,-1,-2,-3,-4,-4-5,-5,-4,-3]).
\]
Then our knot is the closure of $\beta$.
By spending $+1$ full twist $(1,2,3,4,5)^{6}$, we cancel the negative elements.
Set $\alpha=[3,4,5,5,4,3]$.
Then
\begin{equation}\label{eq:o9_40504-58}
\alpha^{-1}\beta \alpha=[(1,2,3,4,5)^{8}, 2,3,2,2,2,3,2].
\end{equation}

On the other hand, $o9\_40504$ is
\[
\begin{split}
\gamma&=[1, 1, 2, 1, 3, 4, 3, 4, 3, 5, 4, 3, 5, 2, 4, 1, 3, 1, 2, 1, 3, 4, 5, 4, 3, 5, 4, 3, 5, 5, 5, 4, \\
&\qquad 3, 2, 1, 3, 4, 5, 4, 4, 5, 4, 3, 2, 4, 3, 4]
\end{split}
\]
from Table \ref{table:braid}.
Set $\alpha'=[-4,-3,-2,-1,5,2,4,3,-5,-4,-3,5,4]$.  Then we see
\begin{equation}\label{eq:o9_40504-58word}
\alpha'^{-1}\gamma \alpha'=[(1,2,3,4,5)^{8},5,4,5,5,5,4,5].
\end{equation}
By renaming the generators $\sigma_i$ with $\sigma_{6-i}$,  this gives
\[
[(5,4,3,2,1)^8, 1,2,1,1,1,2,1].
\]
Further,
taking a conjugation with $(5,4,3,2,1)$,
\[
[(5,4,3,2,1)^7,1,2,1,1,1,2,1,(5,4,3,2,1)]=[(5,4,3,2,1)^8,2,3,2,2,2,3,2].
\]
By reversing the order of words, we can conclude from (\ref{eq:o9_40504-58})
that the closure of $\beta$ is  $o9\_40504$
\end{proof}

\begin{theorem}
The diagram $L14n62791(-\frac{1}{3},-2,\frac{1}{2},-1)$ represents the double branched cover of $L11n179$.
\end{theorem}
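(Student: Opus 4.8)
The plan is to follow exactly the same Montesinos-trick template used throughout the paper (most recently in the proof of the companion theorem for the $58$--surgery via $L14n62791$), adapting only the final link-identification step. By the preceding lemma, the diagram $L14n62791(-\frac{1}{3},-2,\frac{1}{2},-1)$ represents $58$--surgery on $o9\_40504$, which is an L--space surgery; hence its double branched cover interpretation is available once we place the diagram in a strongly invertible position. First I would perform a single $\pm 1$ twist on the appropriate component (the component carrying the knot, as in the other proofs) to bring the four-component surgery diagram into a strongly invertible position, recording how each surgery coefficient changes under the twist and noting the writhe of each non-branching component in the resulting diagram.

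Next I would take the quotient under the strong involution around the axis. For each of the components $C_0, C_1, C_2$ carrying a surgery coefficient, the Montesinos trick replaces it with a rational tangle, where the tangle slope is obtained from the surgery coefficient by subtracting the writhe of that component in the strongly invertible diagram (exactly the bookkeeping done in, e.g., Theorem \ref{lem:t12533-37} with the $\frac{23}{5}-5$ and $\frac{19}{2}-3$ computations). I would then carry out these three tangle replacements explicitly, writing each resulting tangle in its continued-fraction form $[\,\cdots]$, to produce a concrete diagram of a two-component link.

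The main obstacle — as the author repeatedly flags (\emph{``This is the most difficult part of the study''}) — is the final identification of this quotient link as the mirror of $L11n152$. Since both components here are expected to be unknotted (this is the pattern for the link-valued branching sets, as in Theorems \ref{lem:t12533-38} and \ref{thm:t12681-62}), the cleanest route is to isolate one unknotted component, then analyze how the second component wraps around it. Concretely, I would exhibit a sequence of planar isotopies and flypes (analogous to Fig.~\ref{fig:L11n152eq} and Fig.~\ref{fig:L11n178}) reducing the quotient diagram to a standard diagram of $L11n179$, with the winding/linking pattern of one component about the other serving as the invariant that certifies the match. The genuine difficulty is purely diagrammatic: finding the right sequence of simplifications, since there is no shortcut beyond exhibiting an explicit ambient isotopy.

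\begin{proof}
Perform $-1$ twist on the component carrying the knot to obtain a strongly invertible diagram, recording the updated coefficients and the writhe of each remaining component. Taking the quotient under the involution and performing the tangle replacements dictated by these data (each slope being the surgery coefficient corrected by the relevant writhe) yields a two-component link, both of whose components are unknotted. A sequence of isotopies, illustrated in the figures, then identifies this link as the mirror of $L11n179$ by matching how one unknotted component wraps around the other.
\end{proof}
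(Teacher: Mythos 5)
Your template is the right one and matches the paper's proof in outline (one blow-down to reach a strongly invertible position, quotient with rational-tangle replacements, diagrammatic identification of the branching set), but two of your concrete claims are wrong and one of them would derail the identification. The substantive error is your structural prediction for the quotient link: you assert that both components are unknotted, citing ``the pattern for the link-valued branching sets.'' That pattern fails here. The link $L11n179$ has one unknotted component and one \emph{figure-eight} component, and the paper's identification (Fig.~\ref{fig:L11n179eq}) proceeds precisely by isolating the unknotted component and tracking how the knotted, figure-eight component wraps around it; moreover the quotient link is the \emph{mirror} of $L11n179$. If you set out to simplify the quotient diagram expecting two unknots, you would be searching for an isotopy that does not exist, so this is not a cosmetic slip but a wrong intermediate claim that the proof strategy depends on. Relatedly, you at one point name the target $L11n152$ (the branching set for the $38$--surgery on $o9\_40487$) before reverting to $L11n179$ in the proof itself.

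Smaller issues: the twist has the wrong sign and is under-specified. The knot component $C_3$ carries coefficient $-1$, so it is erased by a $+1$ twist (not $-1$); the paper performs $+1$ twist on $C_3$, after which the remaining three components form the mirror of $L10n88(-\frac{2}{3},1,-\frac{3}{2})$ in strongly invertible position (Fig.~\ref{fig:L10n88}), and the three tangle replacements are made on the images of $C_0$, $C_1$, $C_2$. Your appeal to the $58$--surgery being an L--space surgery is also irrelevant: the Montesinos trick needs only the strongly invertible position, not any Floer-theoretic input. Finally, be aware that deferring the entire identification to ``a sequence of isotopies, illustrated in the figures'' leaves the genuinely hard step --- which you yourself flag as the crux --- without any verifiable content; the paper's proof consists essentially of exhibiting that explicit sequence, and with the corrected expectation (unknot plus figure-eight, mirrored) that is what you would still need to supply.
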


\begin{proof}
Performing $+1$ twist on $C_3$ yields a strongly invertible link as shown in Fig.~\ref{fig:L10n88},
which is the mirror of $L10n88$.

After taking the quotient, we can see that the tangle replacement yields the mirror of $L11n179$ (Fig.~\ref{fig:L10n88}).
As shown in Fig.~\ref{fig:L11n179eq}, we can easily see the unknotted component.
The other one is the figure eight knot.
Then we can see the equivalence by examining how the knotted component wraps around
the unknotted one as in Fig.~\ref{fig:L11n179eq}.
\end{proof}

\begin{figure}[th]
\centerline{\includegraphics[bb=0 0 505 213, width=12cm]{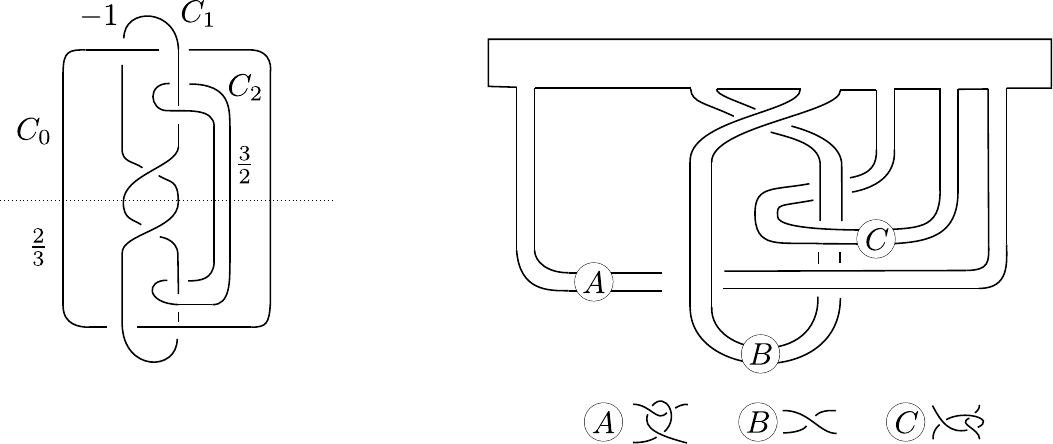}}
\vspace*{8pt}
\caption{Left: The mirror of $L10n88(-\frac{2}{3},1,-\frac{3}{2})$.
Right:  The link after the tangle replacement.
\label{fig:L10n88}}
\end{figure}


\begin{figure}[th]
\centerline{\includegraphics[bb=0 0 547 137, width=12cm]{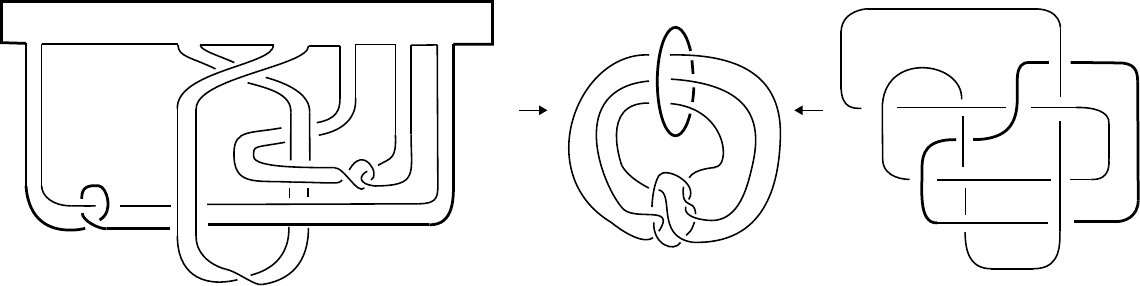}}
\vspace*{8pt}
\caption{An equivalence between the link in Fig.~\ref{fig:L10n88} and the mirror of $L11n179$ (Right).
\label{fig:L11n179eq}}
\end{figure}


\subsection{$59$--surgery}


\begin{lemma}
$L11n456(-\frac{3}{5},-2,-1,-2)$ represents $(-59)$--surgery on the mirror of $o9\_40504$.
\end{lemma}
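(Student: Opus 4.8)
The plan is to follow the exact template established in all the preceding lemmas of this type: start from the surgery diagram $L11n456(-\frac{3}{5},-2,-1,-2)$ with ordered components $(C_0,C_1,C_2,C_3)$, perform a sequence of Rolfsen twists to eliminate the satellite components one at a time, and then identify the remaining knot as the mirror of $o9\_40504$ by reducing it to a positive braid closure and comparing braid words via explicit conjugation. Concretely, I would first perform $+1$ twist on $C_3$ to adjust the coefficient on $C_1$ (this mirrors the $49$--surgery computation in Section~\ref{subsec:o9_38928-49}, since the starting link $L11n456$ is the same and only the coefficient $-\frac{3}{5}$ on $C_0$ differs from the earlier $-\frac{3}{4}$). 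Then I would twist along $C_1$ to erase it, and do the appropriate integer twists along $C_0$ (whose coefficient will have become $\pm\frac{1}{k}$ for some integer $k$, allowing its removal). A final twist on $C_3$ should then leave a single knotted component $C_2$ carrying coefficient $-59$, presented as the closure of a braid on several strands.

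The next step is to reduce this closed braid, via planar isotopies of the diagram (exactly as in Fig.~\ref{fig:o9_38928-49-3} and Fig.~\ref{fig:o9_38928-49braid}), to the closure of a $6$--braid of the form $[(1,2,3,4,5)^6,\ldots]$ or its mirror, where the central full-twist factor $(1,2,3,4,5)^6$ is recognized as $2$ full twists and hence central. Since the slope is $-59$ and the knot is claimed to be the \emph{mirror} of $o9\_40504$, I expect the reduced braid $\beta$ to be negative (or to present the mirror), paralleling the $58$--surgery case where the comparison ran through the word in (\ref{eq:o9_40504-58word}). I would then write down an explicit conjugating braid $\alpha$ and verify by direct computation that $\alpha^{-1}\beta\alpha$ agrees, after the renaming $\sigma_i\mapsto\sigma_{6-i}$ and possibly reversing the order of the word, with the expression for $o9\_40504$ already normalized in (\ref{eq:o9_40504-58word}). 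Because the $58$--surgery lemma has already brought $o9\_40504$ into the canonical form $[(1,2,3,4,5)^8,5,4,5,5,5,4,5]$, I would aim to land $\beta$ in precisely that normal form (or its mirror), so that no fresh reduction of $\gamma$ is needed.

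The hardest part will be the braid-word bookkeeping: tracking how the strand count and the braid word evolve through each Rolfsen twist and each planar isotopy, and in particular guessing the correct conjugating element $\alpha$ that collapses the tail of $\beta$ onto the short canonical tail $2,3,2,2,2,3,2$ (equivalently $5,4,5,5,5,4,5$ after renaming). Recognizing the central full-twist block and using it to absorb or create the sign changes needed to match the positive normal form is the key trick; this is what makes the renaming $\sigma_i\mapsto\sigma_{6-i}$ together with word-reversal produce an honest conjugacy rather than merely a visual resemblance. Once the braid identity is confirmed and the coefficient is checked to be $-59$, the conclusion that $L11n456(-\frac{3}{5},-2,-1,-2)$ is $(-59)$--surgery on the mirror of $o9\_40504$ follows immediately, since surgery on a positive braid closure is negative on its mirror for this sign convention, exactly as in the parallel $59$--surgery phrasing for $o9\_40504$.
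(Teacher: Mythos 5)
Your proposal takes essentially the same route as the paper: reuse the $L11n456$ diagram from the $49$--surgery case of $o9\_38928$ with only the $C_0$ coefficient changed, eliminate components by Rolfsen twists until $C_2$ remains with coefficient $-59$, reduce to the closure of a positive $6$--braid $\beta$ (the knot being the mirror of that closure), and match it by explicit conjugation against the normalized word (\ref{eq:o9_40504-58word}) from the $58$--surgery lemma, exactly as the paper does. One small bookkeeping correction: after the $+1$ twists on $C_3$ and $C_1$, the coefficient on $C_0$ becomes $\frac{2}{5}$, not $\pm\frac{1}{k}$, so $C_0$ cannot be erased in one step---the paper instead performs $-2$ twists on $C_0$ (making its coefficient $2$), erases $C_3$ by a $-1$ twist (which drops $C_0$'s coefficient to $1$), and only then erases $C_0$; the final braid comparison then needs no renaming $\sigma_i\mapsto\sigma_{6-i}$ or word reversal, just two conjugations landing on (\ref{eq:o9_40504-58word}).
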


\begin{proof}
The only difference from the case $o9\_38928$ (Subsection \ref{subsec:o9_38928-49})  is the coefficient on $C_0$.
Hence, just change the coefficient on $C_0$ to $-\frac{3}{5}$ in Fig.~\ref{fig:o9_38928-49}.

After $+1$ twist on $C_3$ and $+1$ twist on $C_1$, we have the diagram as in Fig.~\ref{fig:o9_38928-49-2} (Left), where the coefficient on $C_0$ is changed to $\frac{2}{5}$.
Perform $-2$ twist on $C_0$.
At this point, $C_0$, $C_2$, $C_3$ have coefficients $2$, $-14$, $1$, respectively (Fig.~\ref{fig:o9_40504-59}).
Do $-1$ twist on $C_3$ to erase it.
Finally, $-1$ twist on $C_0$ changes $C_2$ into a knot with coefficient $-59$.

\begin{figure}[th]
\centerline{\includegraphics[bb=0 0 362 193, width=9cm]{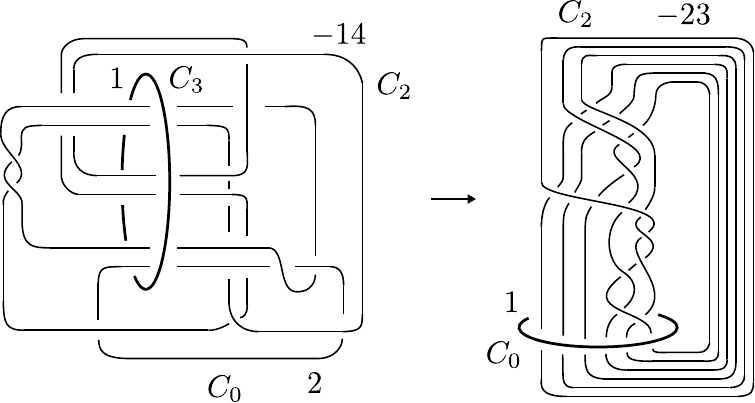}}
\vspace*{8pt}
\caption{Do $-1$ twist on $C_0$, and then $-1$ twist on $C_0$ yields a knot with
coefficient $-59$.
\label{fig:o9_40504-59}}
\end{figure}

Let 
\[
\beta=[(1,2,3,4,5)^6,3,2,4,3,5,4,4,1,2,3,4,5,5,5,4,4,5].
\]
Then our knot is the mirror of the closure of $\beta$.
We identify the closure of $\beta$ as $o9\_40504$.
Set $\alpha=[-5,-2,3,4,4,1,3]$.
Then
\begin{align*}
\alpha^{-1}\beta\alpha&=[(1,2,3,4,5)^8,4,3,4,4,4,3,4]\\
&=[(1,2,3,4,5)^7,5,4,5,5,5,4,5,(1,2,3,4,5)].
\end{align*}
By taking a conjugation with $(-5,-4,-3,-2,-1)$, we have
\[
[(1,2,3,4,5)^8,5,4,5,5,5,4,5],
\]
which is equal to (\ref{eq:o9_40504-58word}).
\end{proof}

\begin{theorem}
The diagram $L11n456(-\frac{3}{5},-2,-1,-2)$ represents the double branched cover of $K11n166$.
\end{theorem}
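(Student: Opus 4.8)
The plan is to follow the proof of Theorem~\ref{lem:o9_38928dbc} as closely as possible, since the starting link is once again $L11n456$ and the only difference is the surgery coefficient on $C_0$, which is now $-\frac{3}{5}$ in place of $-\frac{3}{4}$. First I would perform $+1$ twist on $C_2$, exactly as in that proof. Because the twisting region is governed by $C_2$ and the remaining coefficients $(-2,-1,-2)$ on $(C_1,C_2,C_3)$ are unchanged, this should produce the same strongly invertible diagram as in Fig.~\ref{fig:L12n1896}, with only the tangle associated to the image of $C_0$ altered to record the new coefficient $-\frac{3}{5}$.

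Next I would take the quotient under the involution and carry out the Montesinos trick. The tangle replacements for the images of $C_1$ and $C_3$ are identical to the $49$--surgery case, while the tangle for the image of $C_0$ becomes the rational tangle determined by $-\frac{3}{5}$ together with the writhe of $C_0$ in the diagram. Concretely, I would simply substitute the appropriate rational tangle in place of the tangle $A$ of Fig.~\ref{fig:L12n1896}, in the same spirit as the passage from Theorem~\ref{lem:t12681-62} to the $82$--surgery on $o9\_40363$, where only the $C_0$--tangle had to be replaced.

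This yields a Montesinos-type knot, which I expect to be the mirror of $K11n166$, consistent with the preceding Lemma identifying the surgery as $(-59)$--surgery on the mirror of $o9\_40504$. Finally I would confirm this identification by rewriting the knot as the closure of a braid, or through an explicit sequence of planar isotopies, in direct analogy with Fig.~\ref{fig:K11n172}. The main obstacle is precisely this last step: recognizing the knot produced by the tangle replacement as $K11n166$ requires an explicit and somewhat delicate chain of deformations, and it is the part least amenable to a short verification.
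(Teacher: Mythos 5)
Your proposal is correct and follows essentially the same route as the paper: the paper likewise performs $+1$ twist on $C_2$, observes that this reproduces the strongly invertible diagram of Fig.~\ref{fig:L12n1896} from Theorem~\ref{lem:o9_38928dbc} with only the tangle $A$ changed (the new $C_0$--coefficient becomes $\frac{2}{5}$, giving the rational tangle $[0,-2,2]$), and then identifies the quotient knot as the mirror of $K11n166$ by an explicit series of deformations (Fig.~\ref{fig:K11n166}). The only detail you left implicit is the computation of that rational tangle, which works out exactly as your procedure prescribes, so there is no gap.
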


\begin{proof}
Perform $+1$ twist on $C_2$.
As in the proof of Theorem \ref{lem:o9_38928dbc},
$+1$ twist on $C_2$ yields a strongly invertible diagram in Fig.\ref{fig:L12n1896} with 
new coefficients $ (\frac{2}{5},-1,-1)$ for $(C_0,C_1,C_3)$.
Thus in Fig.~\ref{fig:L12n1896},
only the tangle $A$ is  replaced with $\frac{2}{5}=[0,-2,2]$.

Again, we can  identify  that the link after the tangle replacement is the mirror of $K11n166$.
Figure \ref{fig:K11n166} shows a series of deformations.
\end{proof}

\begin{figure}[th]
\centerline{\includegraphics[bb=0 0 542 461, width=12cm]{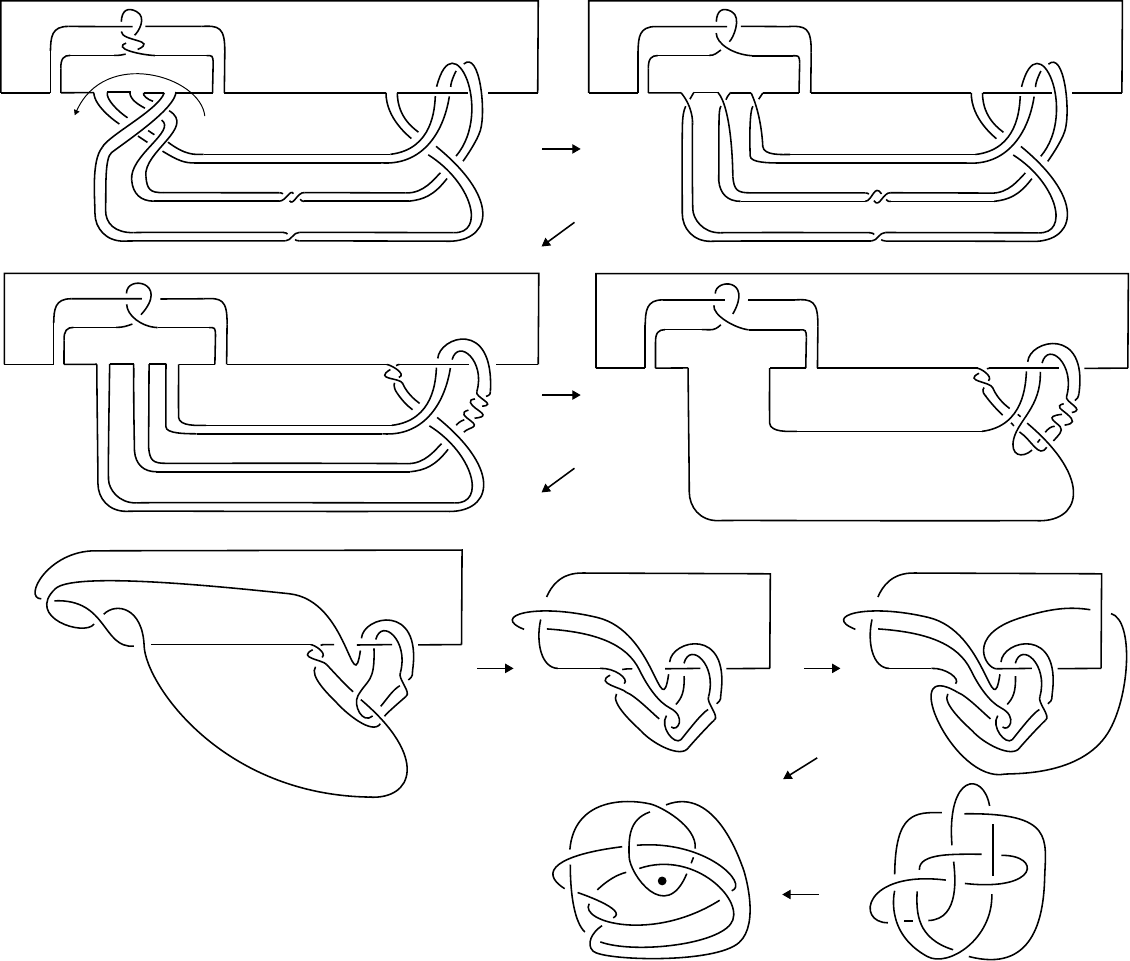}}
\vspace*{8pt}
\caption{An equivalence with the mirror of $K11n166$ (Bottom Right).
\label{fig:K11n166}}
\end{figure}

\section{$o9\_40582$}

\subsection{$43$--surgery}

\begin{lemma}
$L12n1638(-\frac{2}{3},1,-2)$ represents $43$--surgery on $o9\_40582$.
\end{lemma}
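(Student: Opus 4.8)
The plan is to follow the template established for the $38$--surgery on $t12533$ in Subsection \ref{subsec:t12533-38}, since that case is built on the very same link $L12n1638$ with the same coefficient $-\frac{2}{3}$ on $C_0$. The only change is the coefficient on $C_2$, namely $-2$ in place of $-1$, and this is what accounts for the different slope $43$ and for landing on $o9\_40582$ itself rather than on a mirror.

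First I would erase the two auxiliary components $C_0$ and $C_2$ by a sequence of Rolfsen twists, keeping track of the induced change of framing on $C_1$. The new feature compared with Subsection \ref{subsec:t12533-38} is that $C_2$ carries the integer coefficient $-2$, so it cannot be removed by a single $\pm 1$ twist as before; instead I would first twist to bring its coefficient into $\frac{1}{n}$ form, simultaneously updating the coefficient of the linking component $C_0$, and only then remove it. After also erasing the (now simplified) component $C_0$ by the appropriate Rolfsen twist, the remaining component $C_1$ should become a knot of framing $43$ presented as a closed positive braid. Because both the braid and the slope are positive, the result is $o9\_40582$, not its mirror.

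Next I would read off the braid word $\beta$ of this closed braid; from the strand count it should be a $5$--braid, matching the generators $\sigma_1,\dots,\sigma_4$ that appear in the tabulated word for $o9\_40582$. To finish, I would compare $\beta$ with the word $\gamma$ for $o9\_40582$ taken from Table \ref{table:braid}: choosing a suitable conjugator $\alpha$, I expect $\alpha^{-1}\gamma\alpha$ to split off a central full-twist factor $(1,2,3,4)^{k}$ followed by a short tail, and then to agree with $\beta$ after applying the symmetry $\sigma_i\mapsto\sigma_{5-i}$ and reversing the order of the word, exactly as in the earlier lemmas.

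The hard part will be the two bookkeeping steps: (i) selecting the precise twist sequence so that the $-2$--framed $C_2$ is cleared cleanly and $C_1$ lands on slope $43$ in a tidy closed-braid form, and (ii) guessing the conjugating braid $\alpha$ that reduces the resulting closed braid to the tabulated $o9\_40582$ word. As is typical throughout the paper, step (ii) is the genuine obstacle, since the coincidence of the two braids becomes visible only after conjugation, extraction of the central full twist, generator renaming, and reversal.
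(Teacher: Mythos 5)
Your plan coincides with the paper's proof: the paper likewise clears the diagram by Rolfsen twists (a $+1$ twist on $C_0$ raises the coefficient of $C_2$ from $-2$ to $-1$ so it can be blown down, after which a final $+1$ twist on $C_0$ leaves $C_1$ with coefficient $43$ in a closed $5$--braid form $\beta=[(1,2,3,4)^5,(1,2,3)^4,2,1,3,3,4,-3]$) and then identifies the closure with $o9\_40582$ by conjugating the tabulated word. The only cosmetic differences are that the identification here is a direct conjugation $\alpha^{-1}\gamma\alpha=\beta$ with $\alpha=[2,-3,-2,-1,-3,-3,-2]$, so no generator renaming or word reversal is needed, and the braid $\beta$ contains a $\sigma_3^{-1}$, so it is not literally positive as you predicted.
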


\begin{proof}
Do $+1$ twist on $C_0$, and then $+1$ twist on $C_2$ to erase it (Fig.~\ref{fig:o9_40582-43}).
Finally, $+1$ twist on $C_0$ changes $C_1$ into a knot with coefficient $43$, which
has a closed braid form of $5$ strands.
Let $\beta=[(1,2,3,4)^5,(1,2,3)^4,2,1,3,3,4,-3]$.

\begin{figure}[th]
\centerline{\includegraphics[bb=0 0 461 187, width=12cm]{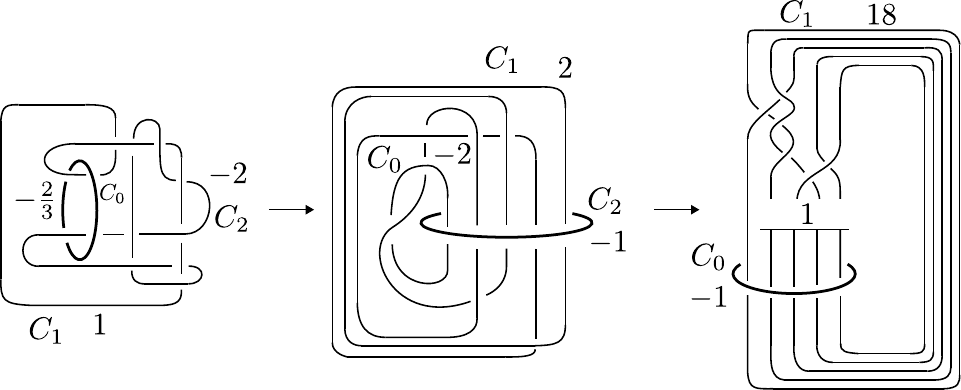}}
\vspace*{8pt}
\caption{Do $+1$ twist on $C_0$, and then $+1$ twist on $C_2$.
\label{fig:o9_40582-43}}
\end{figure}

We can identify the closure of $\beta$ as $o9\_40582$.
From Table \ref{table:braid}, $o9\_40582$ is
\[
\begin{split}
\gamma&=[1, 2, 2, 3, 2, 2, 3, 4, 3, 2, 1, 2, 3, 2, 4, 4, 3, 3, 2, 1, 3, 3, 3, 2, 2, 3, 4, 3, 2, 2, 1, 2, \\
&\qquad  2, 3, 2, 3].
\end{split}
\]
Set $\alpha=[2,-3,-2,-1,-3,-3,-2]$.
Then 
\begin{equation}\label{eq:o9_40582-43}
\alpha^{-1}\gamma \alpha=[(1,2,3,4)^5,(1,2,3)^4,2,1,3,3,4,-3]=\beta.
\end{equation}
Hence the closure of $\beta$ is $o9\_40582$.
\end{proof}

\begin{theorem}
The diagram $L12n1638(-\frac{2}{3},1,-2)$ represents the double branched cover of $K13n2958$.
\end{theorem}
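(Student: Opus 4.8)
The plan is to follow exactly the same template that has been used throughout the paper for the other $18$ surgeries: starting from the Lemma just established, namely that $L12n1638(-\frac{2}{3},1,-2)$ represents $43$--surgery on $o9\_40582$, produce a strongly invertible surgery diagram and then apply the Montesinos trick. First I would perform a single elementary twist on one of the surgery components — by analogy with the $38$--surgery case in Subsection~\ref{subsec:t12533-38}, where a $-1$ twist on $C_1$ placed the companion link $L12n1638(-\frac{2}{3},1,-1)$ into a strongly invertible position — to bring the present diagram into a recognizable strongly invertible form. The expectation is that after this twist the underlying link is one of the small census links (very plausibly a variant of $L14n24287$ or a related $2$--component link all of whose pieces are unknotted), so that it carries a strong inversion whose axis can be drawn explicitly.

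Next I would take the quotient orbifold under this involution and record the two tangle replacements coming from the two surgered components. As in the earlier proofs, each rational surgery coefficient $p/q$ must be corrected by the writhe $w$ of the corresponding component in the chosen diagram, so the replacement tangle is the rational tangle associated to $p/q - w$; I would compute these two rational numbers, convert them to continued-fraction form $[\,\cdots]$, and insert them into the quotient tangle picture. The result of the two tangle replacements is a knot in $S^3$, and the goal is to identify it with the mirror of $K13n2958$.

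The final step, and the one I expect to be the main obstacle, is this identification. Because $K13n2958$ has $13$ crossings, its minimal diagram is genuinely complicated, and the knot after tangle replacement will initially appear as a tangled closed braid or plat rather than in any standard form. The strategy mirrors Theorems~\ref{lem:t12533-37} and~\ref{lem:o9_38928dbc}: exhibit a series of Reidemeister moves and flypes reducing the post-replacement knot to the closure of an explicit braid, then match that braid against a known braid word for $K13n2958$ up to conjugation, reversal of the word order, and the generator-renaming symmetry $\sigma_i \mapsto \sigma_{n-i}$ that records the mirror. I would therefore conjugate by a suitable word $\alpha$ to normalize the braid (as was done repeatedly above) and read off the match. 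The genuine difficulty is the bookkeeping of the deformation from the quotient tangle to a clean braid form — the geometric step that the paper always relegates to a figure — so the bulk of the proof will be a reference to such a figure, after which the braid-word identification is a routine, if lengthy, calculation.
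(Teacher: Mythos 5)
Your proposal matches the paper's proof in essentially every respect: the paper likewise observes that the diagram differs from Subsection~\ref{subsec:t12533-38} only in the third coefficient, reuses the $-1$ twist on $C_1$ to reach the strongly invertible position of Fig.~\ref{fig:L14n24287}, performs the writhe-corrected tangle replacements (the only concrete datum you leave symbolic is that tangle $B$ becomes the integer tangle $-9$, which follows from exactly the $p/q-w$ computation you describe), and then identifies the quotient with the mirror of $K13n2958$ by a sequence of diagram deformations relegated to a figure, starting from the fifth diagram of Fig.~\ref{fig:L12n789}. Nothing in your plan diverges from the paper's route.
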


\begin{proof}
The diagram $L12n1638(-\frac{2}{3},1,-2)$ is different from Subsection \ref{subsec:t12533-38} only in the third coefficient.
Hence the process goes in the same way as in the proof of Theorem \ref{lem:t12533-38}.
Just replace the tangle $B$ with $-9$ in Fig.~\ref{fig:L14n24287}.

We identify the link with the mirror of $K13n2958$.
We can start from the 5th diagram in Fig.~\ref{fig:L12n789} with modifying the number of twists in the box.
Figure \ref{fig:K13n2958} shows a series of deformations.
\end{proof}

\begin{figure}[th]
\centerline{\includegraphics[bb=0 0 463 360, width=12cm]{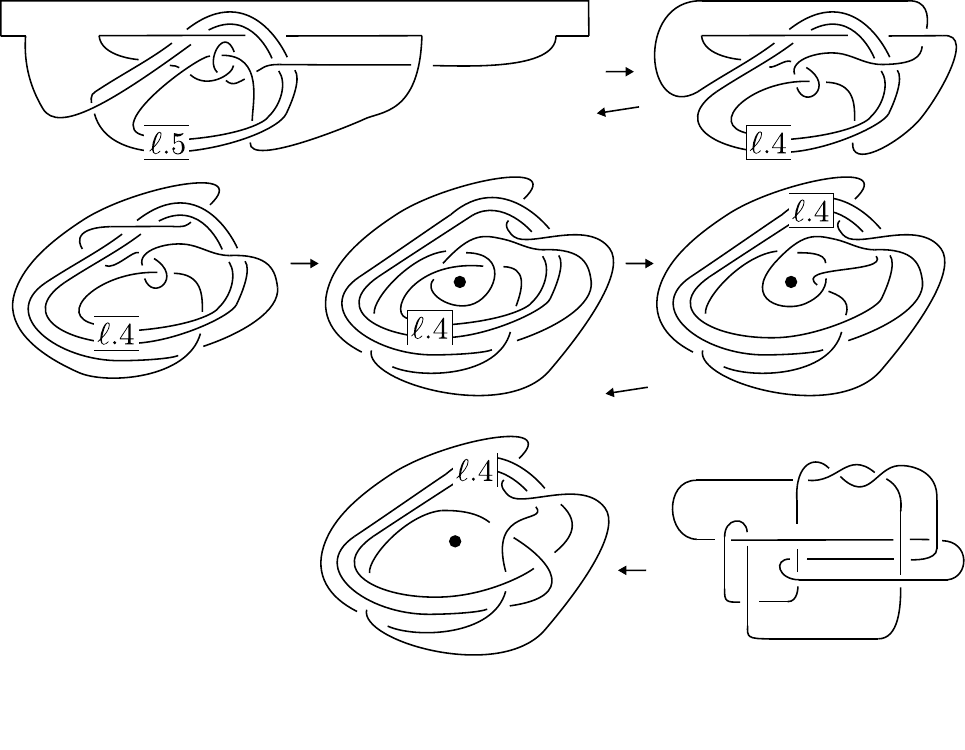}}
\vspace*{8pt}
\caption{Toward the mirror of $K13n2958$ (Bottom Right).
\label{fig:K13n2958}}
\end{figure}

\subsection{$44$--surgery}

\begin{lemma}
$L14n58444(-1,-2,-1)$ represents $44$--surgery on $o9\_40582$.
\end{lemma}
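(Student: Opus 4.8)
The plan is to imitate the reduction carried out for this same link in Lemma~\ref{lem:t12533-37knot}, now with the coefficients $(-1,-2,-1)$ on the ordered components $(C_0,C_1,C_2)$ of $L14n58444$ and with $C_2$ again in the role of the knot. First I would begin from the diagram in Fig.~\ref{fig:start} and erase the two auxiliary components $C_0,C_1$ by Rolfsen twists. Since $C_0$ carries coefficient $-1$, a single right handed $+1$ full twist erases it; as one reads off from the coefficient changes recorded in the proof of Lemma~\ref{lem:t12533-37knot} (where $C_0$ links $C_1$ and $C_2$ with linking numbers of absolute value $1$ and $3$), this twist turns the coefficient on $C_1$ from $-2$ into $-1$ and the coefficient on $C_2$ from $-1$ into $8$. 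Note that the order is forced: $C_1$ cannot be blown down while it is $-2$--framed, so $C_0$ must be removed first. A second $+1$ twist then erases the now $(-1)$--framed $C_1$, and the induced framing change on $C_2$ is $+36$ (consistently with the linking data), bringing its coefficient to $8+36=44$, as required.

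Second, with $C_2$ now presented as a knot of coefficient $44$, I would isotope it, keeping track of the two right handed full twists just introduced, into the closure of a positive braid $\beta$, exactly in the spirit of Figs.~\ref{fig:t12533-37} and~\ref{fig:t12533-37-2}. It then remains to certify that the closure of $\beta$ is $o9\_40582$. For this I would take the braid word $\gamma$ of $o9\_40582$ from Table~\ref{table:braid} and exhibit an explicit conjugating braid $\alpha$ so that $\alpha^{-1}\beta\alpha$ matches $\gamma$ after the moves used throughout: renaming the generators $\sigma_i\mapsto\sigma_{n-i}$ and reversing the word, both of which preserve the closure, together with absorbing a central full-twist power. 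Because both auxiliary components are blown down with right handed twists and the surviving coefficient $+44$ is positive, the closure of $\beta$ is the positive knot $o9\_40582$ itself rather than its mirror, matching the statement.

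The framing arithmetic above is immediate; the real work, which I expect to be the main obstacle, is the explicit isotopy carrying $C_2$ into a clean closed-braid form after the two blow-downs, followed by the search for the conjugator $\alpha$ taking $\beta$ to $\gamma$. A convenient consistency check is supplied by the companion $43$--surgery on $L12n1638$, where $o9\_40582$ already occurs as the closure of the braid in~(\ref{eq:o9_40582-43}): the $44$--braid $\beta$ obtained here must have the same closure, so comparing the two presentations both guides the hunt for $\alpha$ and guards against sign or mirror errors.
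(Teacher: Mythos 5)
Your plan coincides with the paper's own proof: the paper likewise blows down $C_0$ and then $C_1$ with a $+1$ twist each (the same forced order), obtains $C_2$ with coefficient $44$, reduces it to the closure of a $5$--braid $\beta$, and identifies that closure with $o9\_40582$ by conjugating, renaming $\sigma_i\mapsto\sigma_{5-i}$ and reversing the word until it matches the braid (\ref{eq:o9_40582-43}) from the $43$--surgery --- exactly the comparison you propose as a ``consistency check,'' which in the paper is in fact the identification itself. The only minor inaccuracy is your expectation of a positive braid: the paper's $\beta=[(1,2,3,4)^6,4,3,2,1,2,1,1,-2,3,2,4,3,1,2]$ contains a negative letter, but this does not affect the argument.
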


\begin{proof}
Do $+1$ twist on $C_0$, and then $+1$ twist on $C_1$.
Then $C_2$ yields a knot with coefficient $44$, which is in a closed braid form
of $6$ strands (Fig.~\ref{fig:o9_40582-44}).

\begin{figure}[th]
\centerline{\includegraphics[bb=0 0 341 187, width=9cm]{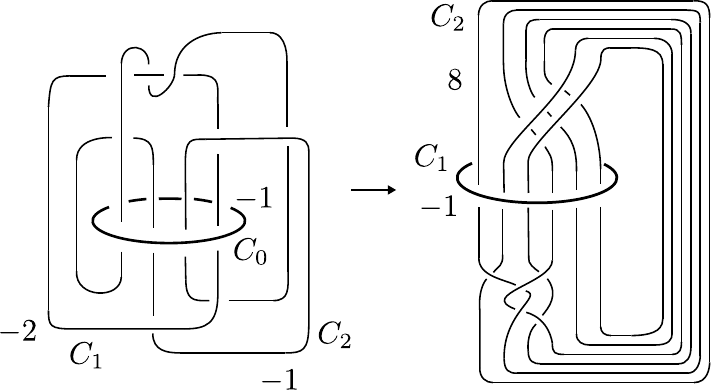}}
\vspace*{8pt}
\caption{Do $+1$ twist on $C_0$, and then $+1$ twist on $C_1$.
\label{fig:o9_40582-44}}
\end{figure}

This braid can be reduced to a $5$--braid $\beta$ as shown in Fig.~\ref{fig:o9_40582-44-2}, where
\[
\beta=[(1,2,3,4)^6,4,3,2,1,2,1,1,-2,3,2,4,3,1,2].
\]

\begin{figure}[th]
\centerline{\includegraphics[bb=0 0 425 184, width=12cm]{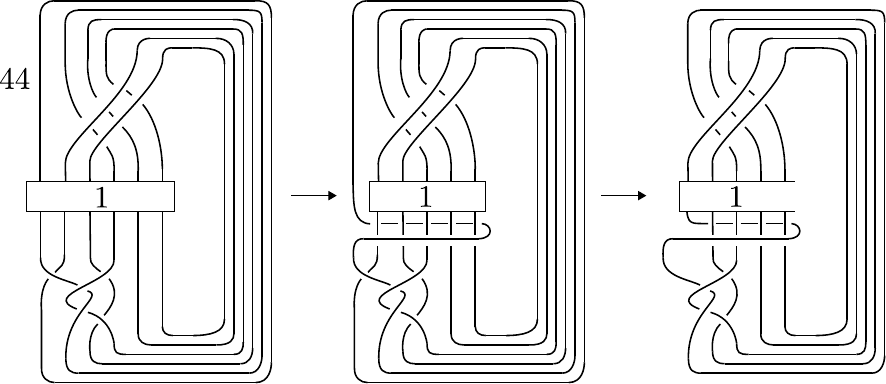}}
\vspace*{8pt}
\caption{The knot can be deformed into the closure of a $5$--braid.
\label{fig:o9_40582-44-2}}
\end{figure}

Set $\alpha=[1,-3,-4]$.
Then
\[
\alpha^{-1}\beta \alpha=[(1,2,3,4)^5,(2,3,4)^4,-3,2,3,4,1,2].
\]
By renaming the generators $\sigma_i$ with $\sigma_{5-i}$, this is
\[
[(4,3,2,1)^5,(3,2,1)^4,-2,3,2,1,4,3]=[(4,3,2,1)^5,(3,2,1)^4,3,2,1,-3,4,3].
\]
Further, taking a conjugation with $[3,2]$ yields
\[
[(4,3,2,1)^5,(3,2,1)^4,1,-3,4,3,3,2]=[(4,3,2,1)^5,(3,2,1)^4,-3,4,3,3,1,2].
\]
Finally, conjugating with $[(3,2,1)^4]$ gives
\[
[(4,3,2,1)^5,-3,4,3,3,1,2,(3,2,1)^4].
\]
If we reverse the order of words, then
this is equal to (\ref{eq:o9_40582-43}).
\end{proof}

\begin{theorem}
The diagram $L14n58444(-1,-2,-1)$ represents the double branched cover of $L13n4413$.
\end{theorem}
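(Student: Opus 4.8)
The plan is to run the Montesinos trick along exactly the route of Theorem~\ref{lem:t12533-37}, since that theorem treats the same link $L14n58444$ with the branch knot carried by the component $C_2$, and only the coefficients on $C_0$ and $C_1$ differ. First I would perform $+1$ twist on $C_2$ to bring $L14n58444(-1,-2,-1)$ into the strongly invertible position of Fig.~\ref{fig:v3437}~(Left). Since this is the same geometric move on the same diagram, the writhes of $C_0$ and $C_1$ are unchanged (namely $5$ and $3$), while the twist adds the square of each linking number with $C_2$ to the corresponding coefficient; as in the $t12533$ computation these additions are $9$, so the new coefficients of $(C_0,C_1)$ are $8$ and $7$.

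Next I would take the quotient around the axis of the involution and read off the two rational tangles exactly as in Theorem~\ref{lem:t12533-37}, each obtained by subtracting the writhe of $C_i$ from its new coefficient. In contrast with the $t12533$ case, where both tangles equal $\frac{13}{2}=[6,-2]$, here the slopes should come out to the integer tangles $A=8-5=3$ and $B=7-3=4$, so that the branch set acquires two components. This is exactly what one expects, since the target $L13n4413$ is a link rather than a knot. I anticipate that the bulk of the diagrammatic reduction can be imported from Figs.~\ref{fig:v3437} and \ref{fig:K12n407} with only the tangle slopes replaced, just as the companion $43$--surgery of $o9\_40582$ was reduced to the $38$--surgery of $t12533$ in Theorem~\ref{lem:t12533-38} by altering a single tangle.

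Finally I would identify the resulting quotient link with the mirror of $L13n4413$. Following the normal form used for the other link branch sets (for instance $L12n789$ in the proof of Theorem~\ref{lem:t12533-38}, or $L11n179$ in Fig.~\ref{fig:L11n179eq}), I would first isolate the unknotted component and then record how the remaining component wraps around it, matching this data against a standard diagram of $L13n4413$ while carefully tracking the overall mirror image and the handedness of the inserted full twists. The hard part will be precisely this last identification: simplifying the explicit quotient diagram by isotopy into a recognizable picture of $L13n4413$ is the delicate, diagram-specific step, whereas the twisting bookkeeping and the tangle arithmetic that precede it are entirely routine.
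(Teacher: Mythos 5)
Your proposal follows the paper's proof essentially verbatim: the paper likewise performs $+1$ twist on $C_2$ to reach the strongly invertible diagram of Fig.~\ref{fig:v3437} with coefficients $(8,7)$ on $(C_0,C_1)$, replaces the tangles $A$ and $B$ by $3$ and $4$, and identifies the quotient with the mirror of $L13n4413$ by tracking how the knotted component (which turns out to be $5_2$) wraps around the unknotted one, exactly the normal form you describe. The only caveat is your aside that integer tangles force a two-component branch set --- that inference is not valid in general, but it plays no role in the argument.
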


\begin{proof}
As in the proof of Theorem \ref{lem:t12533-37},
performing $+1$ twist on $C_2$ gives a strongly invertible diagram as shown in Fig.~\ref{fig:v3437} with
coefficients $(8,7)$ for $(C_0,C_1)$.
Hence the quotient link is obtained from Fig.~\ref{fig:v3437} by replacing
the tangles $A$ and $B$ with $3$ and $4$ respectively.

We need to identity that this is the mirror of  $L13n4413$.
Unlike in Fig.~\ref{fig:K12n407}, it is easier to see how the knotted component, which is the knot $5_2$,
wraps around the  unknotted component (see Fig.~\ref{fig:L13n4413}).
\end{proof}

\begin{figure}[th]
\centerline{\includegraphics[bb=0 0 578 258, width=12cm]{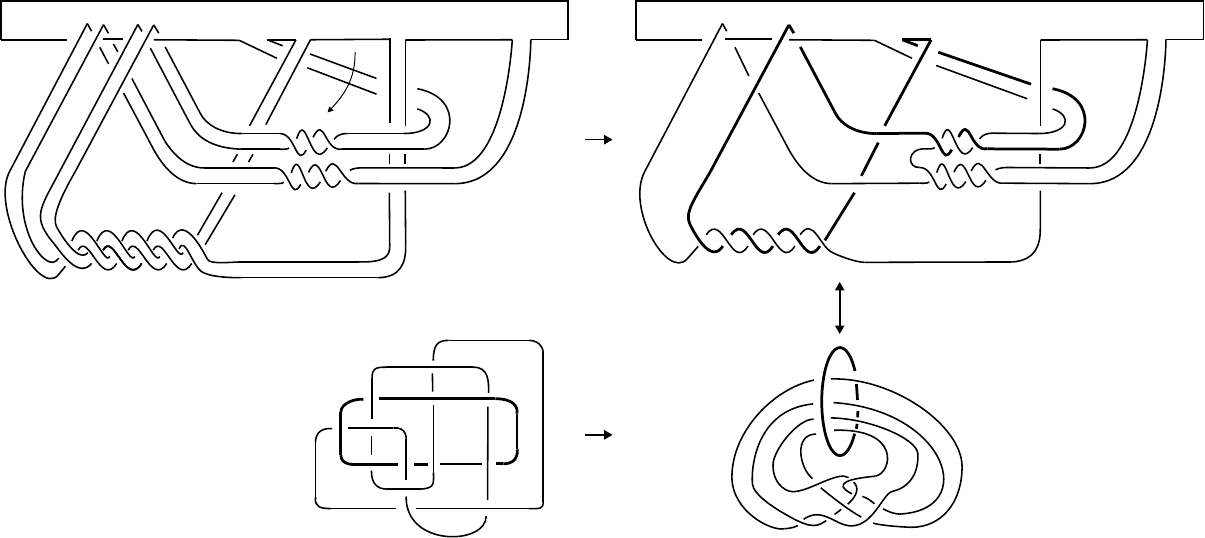}}
\vspace*{8pt}
\caption{An equivalence with the mirror of $L13n4413$ (Bottom left).
\label{fig:L13n4413}}
\end{figure}

\section{$o9\_42675$}

\subsection{$46$--surgery}

\begin{lemma}
$L12n1625(-\frac{1}{4},-3,-1)$ represents $(-46)$--surgery on the mirror of $o9\_42675$.
\end{lemma}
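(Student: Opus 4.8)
The plan is to reuse the strategy of the earlier proof for $L12n1625(-\frac13,-4,-1)$, since we begin from the same link $L12n1625$ in Figure~\ref{fig:start} with only the first two coefficients altered, and the knotted component is again $C_2$. First I would erase $C_0$. Because its coefficient is $-\frac14=\frac{1}{-4}$, four (right-handed) twists along $C_0$ send its coefficient to $\infty$ and delete the component, while giving four full twists to the strands passing through it. As $C_0$ and $C_1$ are once-linked (exactly as in that earlier proof, where three twists carried $-4$ to $-1$), this changes the coefficient on $C_1$ from $-3$ to $+1$, so a single $-1$ twist erases $C_1$ as well. After these two Rolfsen reductions I expect $C_2$ to lie in a closed braid form carrying the surgery coefficient $-46$, which I would then destabilize to a five-braid $\beta$ as in Figure~\ref{fig:o9_40487-38-2}.

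Since the coefficient that $C_2$ finally acquires is negative, the reduced braid $\beta$ will come out negative, so the knot is the mirror of the closure of $\beta$; this is the origin of the word ``mirror'' in the statement, in parallel with the other $(-n)$--surgery lemmas (for instance the $50$--surgery on $o9\_38928$). It remains to show that the closure of $\beta$ is $o9\_42675$. Writing $\gamma$ for the positive five-braid word of $o9\_42675$ from Table~\ref{table:braid}, I would search for a conjugating braid $\alpha$ so that $\alpha^{-1}\gamma\alpha$ assumes the normal form of a central full-twist block $(1,2,3,4)^{k}$ followed by a short residual word, as in (\ref{eq:o9_40487-37-2}); here $(\sigma_1\sigma_2\sigma_3\sigma_4)^{5}$ is central in $B_5$ and can be pulled out freely. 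Applying the generator-renaming symmetry $\sigma_i\mapsto\sigma_{5-i}$ and then reversing the order of the residual letters should make the normalized forms of $\beta$ and $\gamma$ coincide, proving that their closures agree and hence that $C_2$ is the mirror of $o9\_42675$ with framing $-46$.

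The hard part, as the author stresses in the introduction, is producing the conjugating word $\alpha$ (and, on the $\beta$ side, the destabilizing isotopies) that normalizes both braids into matching central-plus-residual form; there is no algorithmic shortcut for guessing $\alpha$. Keeping straight the three separate symmetry operations---conjugation, the index reversal $\sigma_i\mapsto\sigma_{5-i}$, and the word reversal---together with the overall mirroring forced by the negative twists is where sign and ordering mistakes are easiest to make. By contrast, the twist computations that erase $C_0$ and $C_1$ and the single strand-reduction step are routine once the diagram of $L12n1625$ in Figure~\ref{fig:start} is read off correctly.
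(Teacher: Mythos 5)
Your proposal follows the paper's proof essentially verbatim: $4$ twists on $C_0$ (sending $C_1$'s coefficient from $-3$ to $+1$), a $-1$ twist erasing $C_1$, reduction of the resulting negative closed braid (the paper gets $7$ strands first) to a positive $5$--braid $\beta$ whose mirror closure is the knot with framing $-46$, and identification of $\widehat{\beta}$ with $o9\_42675$ by conjugating both $\beta$ and the Table~\ref{table:braid} word $\gamma$ into a full-twist-plus-residual normal form. The only cosmetic difference is that the paper needs no index renaming $\sigma_i\mapsto\sigma_{5-i}$ or word reversal in this particular lemma---conjugations by $\alpha=[-3,-2,-2,1,-2,-1,3,4,2]$ and $\alpha'=[-1,-2,-3,-2,-2]$ already make the two normal forms coincide, using centrality of $(1,2,3,4)^{10}=(4,3,2,1)^{10}$.
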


\begin{proof}
Perform $4$ twists on $C_0$ to erase it.
Then, $-1$ twist on $C_1$ changes $C_2$ to a knot with coefficient $-46$, which
is in a closed braid form of $7$ strands as shown in Fig.~\ref{fig:o9_42675-46}.

\begin{figure}[th]
\centerline{\includegraphics[bb=0 0 343 261, width=8cm]{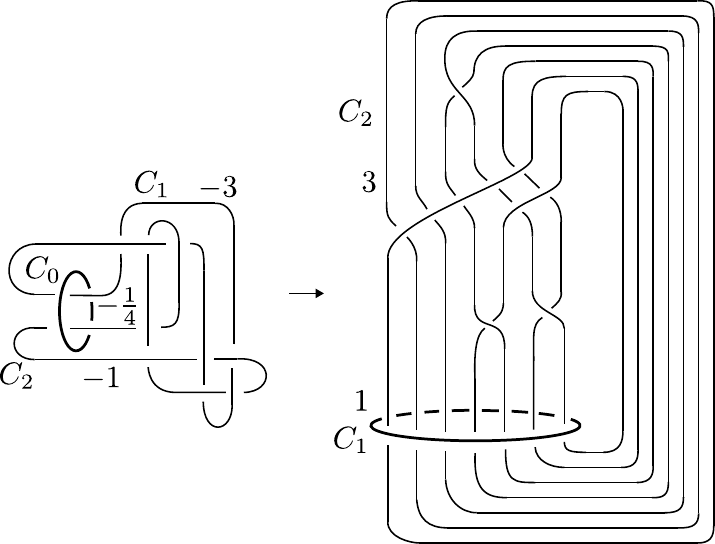}}
\vspace*{8pt}
\caption{Do $4$ twists on $C_0$.
Then $-1$ twist on $C_1$ changes $C_2$ to a knot with coefficient $-46$.
\label{fig:o9_42675-46}}
\end{figure}

As shown in Fig.~\ref{fig:o9_42675-46-2}, this knot is deformed into the closure of a $5$--braid.
Let
\[
\beta=[(1,2,3,4)^5,2,2,3,4,4,3,2,1,4,3,2,1,2,2,2,3].
\]
Then our knot is the mirror of the closure of $\beta$.

\begin{figure}[th]
\centerline{\includegraphics[bb=0 0 561 526, width=12cm]{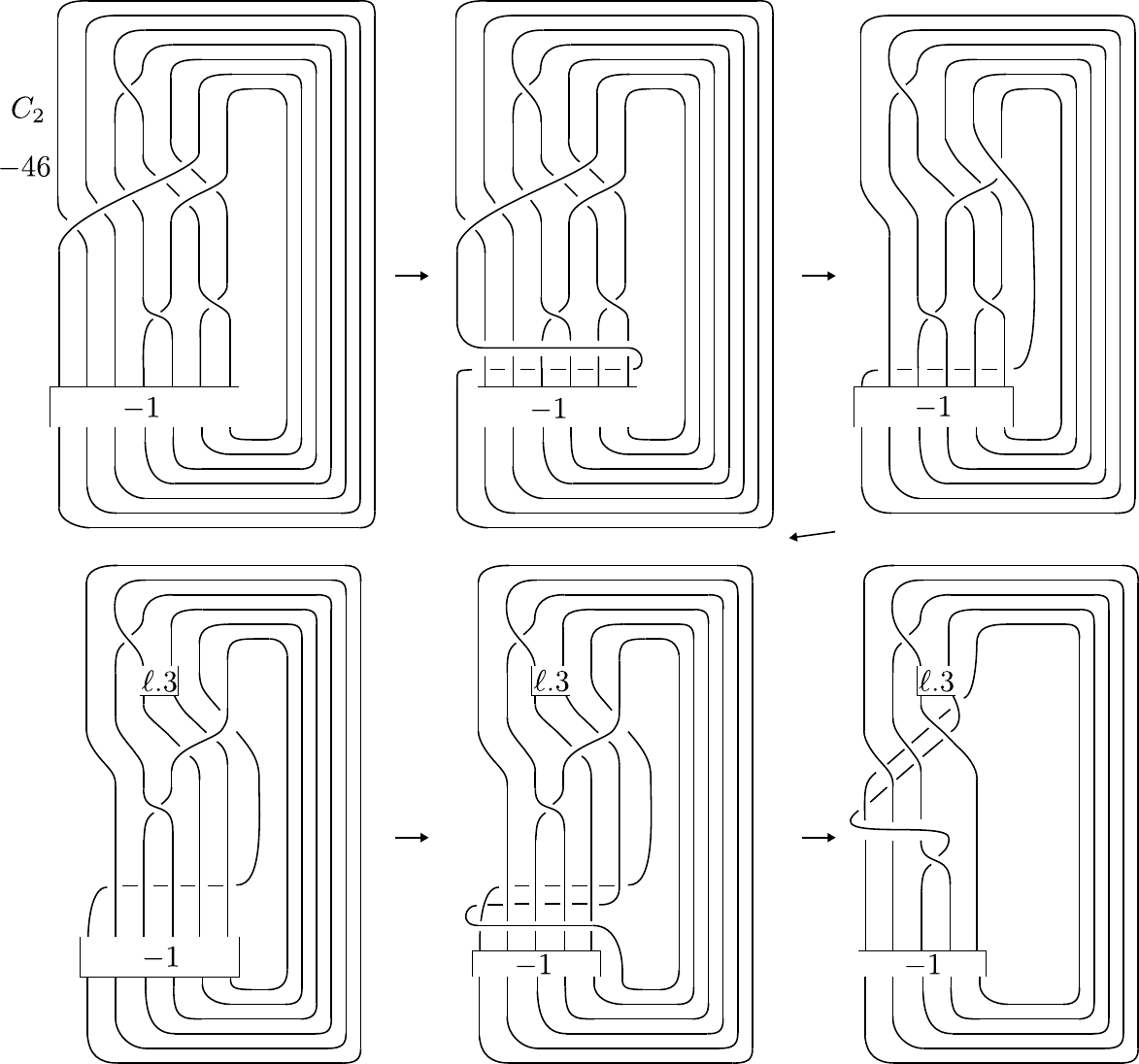}}
\vspace*{8pt}
\caption{The knot is deformed into the closure of a $5$--braid.
\label{fig:o9_42675-46-2}}
\end{figure}

Set $\alpha=[-3,-2,-2,1,-2,-1,3,4,2]$.
Then
\begin{equation}\label{eq:o9_42675-46}
\alpha^{-1}\beta\alpha=[(1,2,3,4)^{10},-4,-3,-2,-2,-3,1,2,-3].
\end{equation}

On the other hand, $o9\_42675$ is
\[
\gamma=[1, 2, 1, 3, 2, 4, 2, 3, 2, 4, 2, 3, 2, 3, 2, 1, 2, 3, 3, 4, 4, 3, 3, 4, 3, 3, 2, 1, 3, 2, 4, 2,  3, 2, 1, 3]
\]
from Table \ref{table:braid}.
Set $\alpha'=[-1,-2,-3,-2,-2]$. Then 
\[
\alpha'^{-1}\gamma \alpha=[(4,3,2,1)^{10},-4,-3,-2,-2,-3,1,2,-3],
\]
which is equal to (\ref{eq:o9_42675-46}).
\end{proof}

\begin{theorem}
The diagram $L12n1625(-\frac{1}{4},-3,-1)$ represents the double branched cover of $L12n702$.
\end{theorem}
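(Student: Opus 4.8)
The plan is to follow exactly the template established in the preceding theorems of this paper, most closely Theorem \ref{lem:o9_40487-38}, since the starting link is again $L12n1625$ and the overall strategy is identical: begin from the surgery diagram $L12n1625(-\frac{1}{4},-3,-1)$, convert it into a strongly invertible position by a single integral twist on one component, pass to the quotient under the involution to obtain a link via the Montesinos trick, and finally identify the resulting branching set with $L12n702$.

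\begin{proof}
Performing $+1$ twist on $C_2$ in the diagram $L12n1625(-\frac{1}{4},-3,-1)$ yields a strongly invertible link $C_0\cup C_1$, in the same manner as in the proof of Theorem \ref{lem:o9_40487-38}. The only difference from that earlier situation is the coefficient on $C_0$: here it is $-\frac{1}{4}$ rather than $-\frac{1}{3}$. Hence the strongly invertible diagram is essentially the one in Fig.~\ref{fig:L13n4344}, with only the tangle $A$ changed to reflect the new slope $-\frac{1}{4}=[0,4]$, while the tangle for the image of $C_1$ is unchanged.

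Next I take the quotient of this diagram under the involution and carry out the tangle replacement. As in the companion proof, the image of $C_0$ is replaced by the rational tangle determined by its new coefficient, with the writhe correction accounted for as before, and the image of $C_1$ by the same tangle as in Theorem \ref{lem:o9_40487-38}. This produces a specific two–component link, which is the branching set of the double branched cover represented by the original surgery diagram.

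The main step, and the one requiring genuine care, is to identify this branching set with the mirror of $L12n702$. Following the method used throughout the paper for links, I expect both components of the quotient link to be unknotted (or one of them a small knot such as $5_2$ or the figure eight, as in neighboring cases), so that the identification reduces to tracking how one component wraps around the other. I would record this by a series of isotopies, massaging the diagram into a standard form for $L12n702$, displayed in a figure analogous to Fig.~\ref{fig:L11n152eq}. The hard part will be this explicit diagrammatic identification, since the tangle $A=[0,4]$ differs from the earlier $[0,3,3]$ and changes the linking pattern; once the components are isotoped so that the wrapping of one around the other is visibly the same as in $L12n702$, the equivalence follows and the proof is complete.
\end{proof}
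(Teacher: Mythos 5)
Your overall strategy is the paper's: perform $+1$ twist on $C_2$ to reach a strongly invertible position, apply the Montesinos trick using the diagram of Fig.~\ref{fig:L13n4344}, and identify the quotient link diagrammatically. However, there are two concrete errors that would derail the computation. First, you claim the only difference from Theorem \ref{lem:o9_40487-38} is the coefficient on $C_0$ and that the tangle for the image of $C_1$ is unchanged. That is false: the earlier diagram is $L12n1625(-\frac{1}{3},-4,-1)$ while the present one is $L12n1625(-\frac{1}{4},-3,-1)$, so the coefficient on $C_1$ also differs ($-3$ versus $-4$), and hence \emph{both} tangles in Fig.~\ref{fig:L13n4344} must be changed; in the paper's proof the tangles $A$ and $B$ are replaced with $[0,-1,3]$ and $-1$, respectively.

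Second, your tangle $A$ is computed from the wrong slope. You substitute the raw coefficient $-\frac{1}{4}=[0,4]$, but the $+1$ twist on $C_2$ that produces the strongly invertible position already alters the coefficient on $C_0$ (the components link once), changing it from $-\frac{1}{4}$ to $\frac{3}{4}$; after the quotient the correct replacement is $\frac{3}{4}=[0,-1,3]$. (Your reference to an ``earlier $[0,3,3]$'' for Theorem \ref{lem:o9_40487-38} is likewise not what that proof uses.) With $A=[0,4]$ and an unchanged $B$, the quotient link is the branching set of a different surgery, not of $L12n1625(-\frac{1}{4},-3,-1)$, so the concluding isotopy to the mirror of $L12n702$ could not succeed no matter how carefully the wrapping of one component around the other is tracked. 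The deferred diagrammatic identification is acceptable in spirit (the paper relegates it to Fig.~\ref{fig:L12n702}), but it must start from the correct tangles $(\frac{3}{4},\,-1)$ coming from the post-twist coefficients $(\frac{3}{4},6)$ with the writhe correction on $C_1$.
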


\begin{proof}
As in the proof of Theorem \ref{lem:o9_40487-38}, doing $+1$ twist on $C_2$
yields a strongly invertible link $C_1\cup C_2$ with coefficients $(\frac{3}{4},6)$.

In Fig.~\ref{fig:L13n4344}, the tangles $A$ and $B$ are replaced with
$\frac{3}{4}=[0,-1,3]$ and  $-1$, respectively.
Then we can identify that the link is the mirror of $L12n702$ as shown in Fig.~\ref{fig:L12n702}.
\end{proof}

\begin{figure}[th]
\centerline{\includegraphics[bb=0 0 537 122, width=12cm]{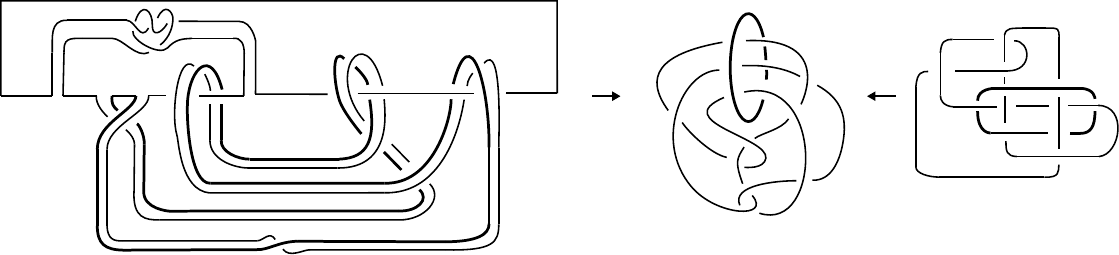}}
\vspace*{8pt}
\caption{An equivalence with the mirror of $L12n702$ (Right).
\label{fig:L12n702}}
\end{figure}

\subsection{$47$--surgery}

Finally, we use $L14n63014$ shown in Fig.~\ref{fig:start}.


\begin{lemma}
$L14n63014(-4,-\frac{3}{2},-1,-1)$ represents
$47$--surgery on $o9\_42675$.
\end{lemma}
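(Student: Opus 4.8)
The plan is to reuse, verbatim in spirit, the blow-down-and-reduce recipe applied to all the earlier positive surgeries (compare the $46$--surgery lemma just above, and the $43$-- and $44$--surgeries of $o9\_40582$). Starting from $L14n63014(-4,-\frac{3}{2},-1,-1)$ with ordered components $(C_0,C_1,C_2,C_3)$, the knot lives on $C_2$ and carries the framing $-1$; the goal is to erase the three auxiliary unknots $C_0,C_1,C_3$ by Kirby moves and to watch the framing on $C_2$ rise from $-1$ to $+47$. The one genuinely new feature compared with, say, the $o9\_40487$ surgeries is the half-integer coefficient $-\frac{3}{2}$ on $C_1$: before $C_1$ can be blown down it must first be converted to a $\pm1$--framed unknot by a preliminary Rolfsen twist (a single $+1$ twist sends $-\frac{3}{2}$ to the integer $3$, and the remaining reduction can be arranged through the linking components). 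I would choose the order of twists so that each auxiliary component in turn becomes $\pm1$--framed and is removed, recording at every stage the induced framing change $t\,\ell^2$ on the survivors (where $t$ is the number of twists and $\ell$ the relevant linking number); the net effect on $C_2$ must total $+48$, certifying the coefficient $+47$.

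Once $C_0,C_1,C_3$ are gone, $C_2$ is a single knot with framing $+47$, and I would isotope it into a closed-braid position. By analogy with the $46$--surgery case I expect a $7$--braid that destabilizes to a $5$--braid, whose word I would record as some $\beta$ in the letters $1,2,3,4$. The remaining step is purely braid-theoretic: produce an explicit conjugator $\alpha$ (and, if needed, the symmetry $\sigma_i\mapsto\sigma_{5-i}$ together with a reversal of the word) identifying the closure of $\beta$ with the closure of the Table~\ref{table:braid} word $\gamma$ for $o9\_42675$. Because the surgery is \emph{positive} and the target is $o9\_42675$ itself rather than its mirror (in contrast to the $46$--surgery lemma, which landed on the mirror), I expect $\beta$ to be conjugate to $\gamma$ directly, with the central full-twist block $(1,2,3,4)^{5}$ pulled out exactly as in (\ref{eq:o9_42675-46}).

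The main obstacle is this final braid identification: $o9\_42675$ is a positive $5$--braid of length $36$ and genus $16$, so after extracting the central full twist one must conjugate a relatively long tail word into the tail of $\gamma$, and finding the correct $\alpha$ is the delicate part, exactly as it was in the $46$--surgery computation. A secondary difficulty, and the step most likely to hide a sign error, is the treatment of the half-integer framing $-\frac{3}{2}$ on $C_1$: getting the Rolfsen reduction and the induced framing changes on the surviving components right is precisely what pins the final coefficient to $+47$, and hence guarantees that the diagram genuinely represents $47$--surgery on $o9\_42675$ rather than a nearby slope.
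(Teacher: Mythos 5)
Your overall architecture coincides with the paper's: erase the three auxiliary unknots by Rolfsen twists while tracking the $t\,\ell^2$ framing changes (your net $+48$ on $C_2$ is the right bookkeeping), put the resulting $47$--framed knot in closed braid position, reduce the strand number, and finish with a braid-word identification (the paper obtains a $6$--braid that is then reduced to a $5$--braid, not a $7$--braid, but that is immaterial). However, your one concrete new step is a wrong turn. Converting the $-\frac{3}{2}$ on $C_1$ to the integer $3$ by a $+1$ self-twist does not produce a $\pm1$--framed unknot (even by your own stated criterion), and it strands the reduction: an unknot with coefficient $3$ can never be erased by further twists on itself, since $3\mapsto 3/(1+3t)$ is never $\infty$, and repairing this ``through the linking components'' would force extra twists that also re-twist $C_2$ and $C_0$. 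The workable order, which is the paper's, is the opposite: first twist on an adjacent component, changing $C_1$'s coefficient by $\ell^2$ so that it becomes $-\frac{1}{2}$, and only then perform $2$ twists on $C_1$, which erases it outright because $-\frac{1}{2}$ has the form $\frac{1}{n}$. In this scheme the auxiliary coefficients must be driven to the form $\frac{1}{n}$, not to integers of absolute value greater than $1$.

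The more serious issue is that your proposal defers precisely the content of the lemma: no explicit twist sequence is verified against the diagram, no braid word $\beta$ is produced, and no conjugator $\alpha$ is exhibited --- these data \emph{are} the proof. Moreover, your expectation that $\beta$ should be ``conjugate to $\gamma$ directly'' because the surgery is positive is not borne out. In the paper one takes
$\beta=[(1,2,3,4)^5,3,3,3,4,4,3,2,1,1,2,4,3,2,1,3,2]$ and $\alpha=[-2,-3,-3,-2,-1,3,4,-2,-3,-4]$, obtains
$\alpha^{-1}\beta\alpha=[(1,2,3,4)^{10},-3,2,1,-3,-2,-2,-3,-4]$, and then must \emph{reverse the order of the word} to match (\ref{eq:o9_42675-46}) --- the word already shown conjugate to the Table~\ref{table:braid} word $\gamma$ in the $46$--surgery lemma. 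You did allow reversal as a fallback, so this is a soft miss, but you overlook the economical move of routing the identification through (\ref{eq:o9_42675-46}) rather than re-deriving conjugacy to $\gamma$ from scratch; note also that the central block extracted is $(1,2,3,4)^{10}$ (two full twists), not the single full twist $(1,2,3,4)^{5}$, since negative letters remain in the conjugated word. As it stands, the proposal is a correct strategic outline with one step that would fail as described and with the two decisive computations left unexecuted.
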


\begin{proof}
Perform $+1$ twist on $C_0$, and then $2$ twists on $C_1$.
See Fig.~\ref{fig:o9_42675-47}.
Finally, $+1$ twist on $C_0$ changes $C_2$ into a knot with coefficient $47$ in a closed braid form of $6$ strands.
However, this is deformed into the closure of a $5$--braid as in Fig.~\ref{fig:o9_42675-47-2}.

\begin{figure}[th]
\centerline{\includegraphics[bb=0 0 527 261, width=12cm]{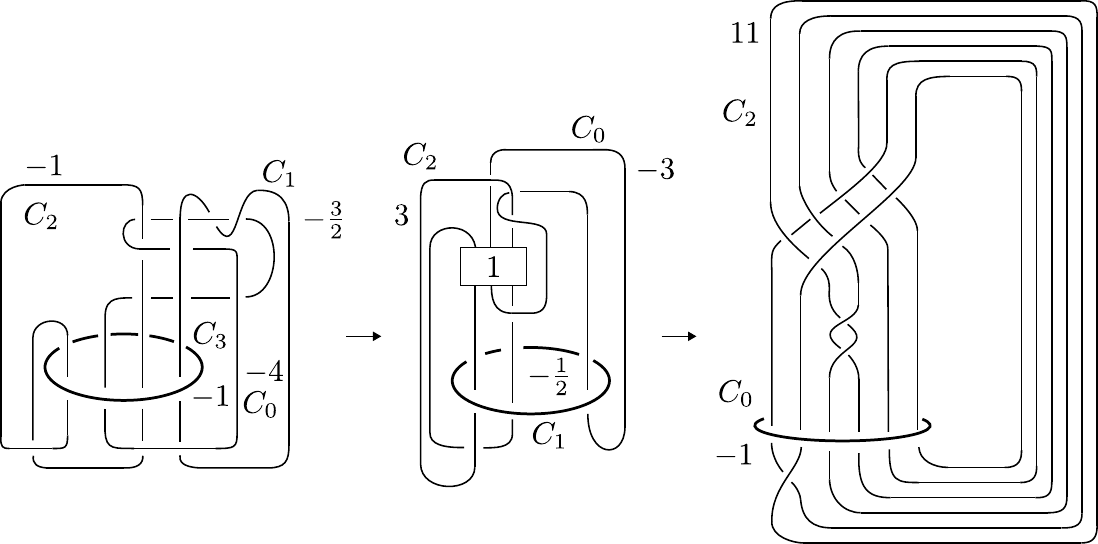}}
\vspace*{8pt}
\caption{Do $+1$ twist on $C_0$, and then $2$ twists on $C_1$.
\label{fig:o9_42675-47}}
\end{figure}

\begin{figure}[th]
\centerline{\includegraphics[bb=0 0 558 261, width=12cm]{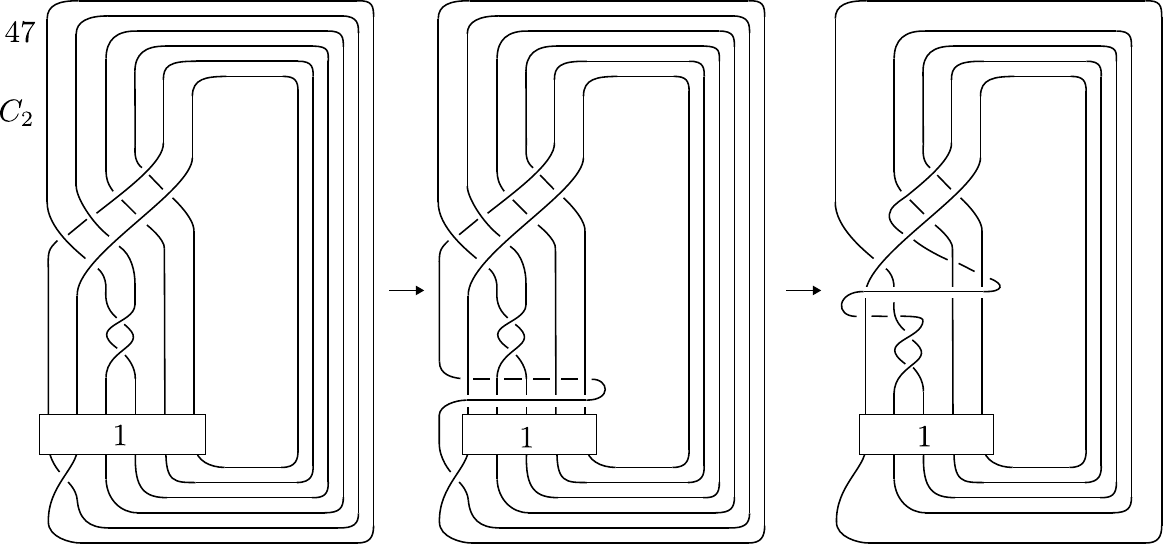}}
\vspace*{8pt}
\caption{The knot is deformed into the closure of a $5$--braid.
\label{fig:o9_42675-47-2}}
\end{figure}

Let 
\[
\beta=[(1,2,3,4)^5,3,3,3,4,4,3,2,1,1,2,4,3,2,1,3,2].
\]
Then our knot is the closure of $\beta$.
Set $\alpha=[-2,-3,-3,-2,-1,3,4,-2,-3,-4]$.  Then
\[
\alpha^{-1}\beta \alpha=[(1,2,3,4)^{10}, -3,2,1,-3,-2,-2,-3,-4].
\]
If we reverse the order of words, this is equal to (\ref{eq:o9_42675-46}).
\end{proof}

\begin{theorem}
The diagram $L14n63014(-4,-\frac{3}{2},-1,-1)$ represents the double branched cover of $K12n730$.
\end{theorem}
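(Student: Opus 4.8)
The plan is to apply the Montesinos-trick strategy used throughout the preceding sections. Starting from the four-component diagram $L14n63014(-4,-\frac{3}{2},-1,-1)$, I would first perform a short sequence of twists to reach a strongly invertible surgery diagram: the integrally framed $-1$ components can be erased by twists along them, as in the four-component argument for $L14n63000$ in Theorem \ref{thm:t12681-62}, and a further $\pm 1$ twist on the branching component $C_2$ should bring the remaining link into a strongly invertible position, just as in the companion $46$-surgery argument for $L12n702$ (modeled on Theorem \ref{lem:o9_40487-38}). At each stage I would record the coefficients induced on the surviving components.

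With the diagram strongly invertible, I would take the quotient under the involution and carry out the tangle replacements. The replacement slope on each surgered component is its surgery coefficient corrected by the writhe of that component in the strongly invertible diagram, exactly the writhe-correction performed in Theorem \ref{lem:t12533-37} and in the $L12n702$ case, where the tangles were $[0,-1,3]$ and $-1$. This yields a diagram of a knot assembled from one or two rational tangles.

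Finally, I would identify this knot as the mirror of $K12n730$. Since the label indicates a $12$-crossing knot, I expect it to appear either as a Montesinos knot (as in the $65$- and $83$-surgery identifications, which produced the Montesinos knots $(\frac{2}{3},\frac{2}{5},-\frac{1}{5})$ and $(\frac{4}{7},\frac{3}{4},-\frac{1}{3})$) or as the closure of an explicit braid after a few planar deformations (as in Theorems \ref{lem:t12533-37} and \ref{lem:o9_38928dbc}); in either case the verification is a short series of isotopies matching the standard $K12n730$ diagram.

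The hard part will be this last identification together with the bookkeeping that precedes it. As the introduction stresses, producing the strongly invertible surgery diagram and then tracking its quotient under explicit deformations is the most delicate part of the project; the four-component starting link makes the twist-and-writhe bookkeeping before the quotient especially error-prone, and the final planar isotopies bringing the tangle-replaced diagram into recognized $K12n730$ form demand care.
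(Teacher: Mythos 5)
Your plan coincides with the paper's proof: the essential move is the single $+1$ twist on the $(-1)$--framed knot component $C_2$, which blows it down and leaves $C_0\cup C_1\cup C_3$ in a strongly invertible position, after which the quotient, the writhe-corrected tangle replacements, and a chain of isotopies exhibiting the branch set and the mirror of $K12n730$ as closures of the same braid complete the argument (your ``Montesinos knot or braid closure'' hedge resolves to the braid-closure route). One caution on your first step: the paper does \emph{not} erase the other $(-1)$--framed component $C_3$ --- it must survive as a tangle-replacement site, and only the blow-down of $C_2$ is performed, which is forced since, as the Remark in Section \ref{sec:t12533} explains, no diagram retaining the component of the asymmetric knot can be strongly invertible.
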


\begin{proof}
In the diagram, performing $+1$ twist on $C_2$ yields a strongly invertible diagram.
See Fig.~\ref{fig:L9n24}.
After taking the quotient, the tangle replacement as shown in Fig.~\ref{fig:L9n24} gives
the mirror of the knot $K12n730$.
Figure \ref{fig:K12n730} shows a series of deformations such that
both knots are represented as the closure of the same braid.
\end{proof}

\begin{figure}[th]
\centerline{\includegraphics[bb=0 0 456 134, width=12cm]{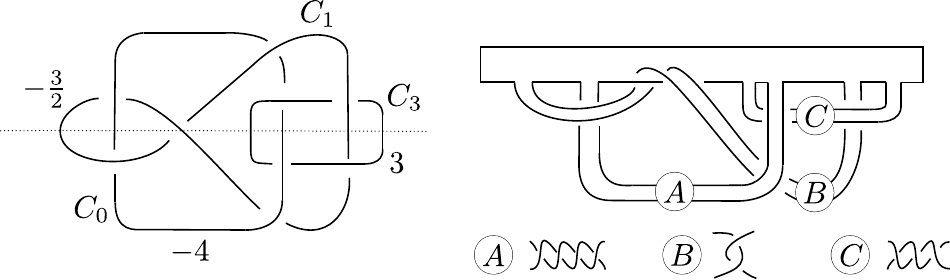}}
\vspace*{8pt}
\caption{Left:  A strongly invertible diagram after $+1$ twist on $C_2$.
Right: The knot after the tangle replacement.
\label{fig:L9n24}}
\end{figure}


\begin{figure}[th]
\centerline{\includegraphics[bb=0 0 528 330, width=12cm]{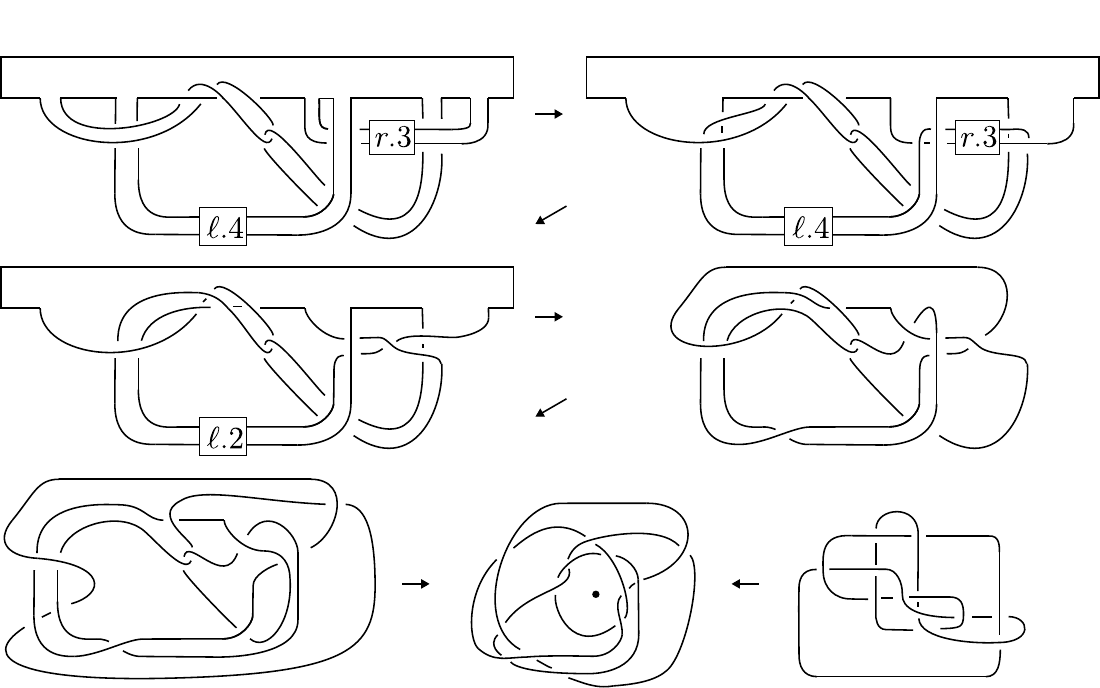}}
\vspace*{8pt}
\caption{The equivalence with the mirror of $K12n730$ (Bottom Right).
A black dot indicates the axis of the closed braid.
\label{fig:K12n730}}
\end{figure}


\end{document}